\documentclass[12pt]{article}
\usepackage{amssymb,amsmath,amsfonts,bbm,pifont,upgreek,bbold,accents,xcolor,appendix}  
\usepackage{biblatex}
\usepackage{hyperref}

\hypersetup{urlcolor=blue, citecolor=red}

\usepackage{amssymb,bbm,pifont,upgreek}
\font\teneufm=eufm10
\font\seveneufm=eufm7
\font\fiveeufm=eufm5
\newfam\eufmfam
\textfont\eufmfam=\teneufm
\scriptfont\eufmfam=\seveneufm
\scriptscriptfont\eufmfam=\fiveeufm

\newcommand\beq[1]{ \begin{equation}\label{#1} }
\newcommand{\eeq}{ \end{equation} }

\newcommand{\beqa}{ \begin{align*} }
\newcommand{\eeqa}{ \end{align*} }

\newcommand{\beqano}{ \begin{eqnarray*} }
\newcommand{\eeqano}{ \end{eqnarray*} }

\newtheorem{theorem}{Theorem}
\newtheorem{definition}{Definition}
\newtheorem{proposition}{Proposition}
\newtheorem{lemma}{Lemma}
\newtheorem{sublemma}{Sublemma}
\newtheorem{remark}{Remark}
\newtheorem{notationalremark}{Notations}
\newtheorem{corollary}{Corollary}
\newtheorem{assumption}{Assumption}
\newtheorem{claim}{Claim}

\newtheorem{question}{Question}
\newtheorem{tools}{$\negsp\negsp$}[subsection]

\newcommand\thm[1]{ \begin{theorem}\label{#1}}
\newcommand\thmtwo[2]{ \begin{theorem}[#1]\label{#2}}
\newcommand\ethm{ \end{theorem} }
\newcommand\dfn[1]{ \begin{definition}\label{#1} \rm}
\newcommand\dfntwo[2]{ \begin{definition}[#1]\label{#2} \rm}
\newcommand\edfn{ \end{definition} }
\newcommand\pro[1]{ \begin{proposition}\label{#1}}
\newcommand\protwo[2]{ \begin{proposition}[#1]\label{#2}}
\newcommand\epro{ \end{proposition} }
\newcommand\lem[1]{ \begin{lemma}\label{#1}}
\newcommand\lemtwo[2]{ \begin{lemma}[#1]\label{#2}}
\newcommand\elem{ \end{lemma} }
\newcommand\sublem[1]{ \begin{sublemma}\label{#1}}
\newcommand\sublemtwo[2]{ \begin{sublemma}[#1]\label{#2}}
\newcommand\esublem{ \end{sublemma} }
\newcommand\rem[1]{ \begin{remark}\label{#1} \rm}
\newcommand\erem{ \end{remark} }
\newcommand\notrem[1]{ \begin{notationalremark}\label{#1} \rm}
\newcommand\enotrem{ \end{notationalremark} }
\newcommand\cor[1]{ \begin{corollary}\label{#1}}
\newcommand\cortwo[2]{ \begin{corollary}[#1]\label{#2}}
\newcommand\ecor{ \end{corollary} }
\newcommand\asmp[1]{ \begin{assumption}\label{#1}}
\newcommand\asmptwo[2]{ \begin{assumption}[#1]\label{#2}}
\newcommand\easmp{ \end{assumption} }
\newcommand\clm[1]{ \begin{claim}\label{#1}}
\newcommand\eclm{ \end{claim} }

\expandafter\chardef\csname pre amssym.def
at\endcsname=\the\catcode`\@
\catcode`\@=11
\def\undefine#1{\let#1\undefined}
\def\newsymbol#1#2#3#4#5{\let\next@\relax
 \ifnum#2=\@ne\let\next@\msafam@\else
 \ifnum#2=\tw@\let\next@\msbfam@\fi\fi
 \mathchardef#1="#3\next@#4#5}
\def\mathhexbox@#1#2#3{\relax
 \ifmmode\mathpalette{}{\m@th\mathchar"#1#2#3}%
 \else\leavevmode\hbox{$\m@th\mathchar"#1#2#3$}\fi}
\def\hexnumber@#1{\ifcase#1 0\or 1\or 2\or 3\or 4\or 5\or 6\or 7\or
8\or
 9\or A\or B\or C\or D\or E\or F\fi}
\ifcase\@ptsize
 \font\tenmsb=msbm10
 \font\sevenmsb=msbm7
 \font\fivemsb=msbm5
\or
 \font\tenmsb=msbm10 scaled \magstephalf
 \font\sevenmsb=msbm7 scaled \magstephalf
 \font\fivemsb=msbm5  scaled \magstephalf
\or
 \font\tenmsb=msbm10 scaled \magstep1
 \font\sevenmsb=msbm7 scaled \magstep1
 \font\fivemsb=msbm5 scaled \magstep1
\fi
\newfam\msbfam
\textfont\msbfam=\tenmsb
\scriptfont\msbfam=\sevenmsb
\scriptscriptfont\msbfam=\fivemsb
\edef\msbfam@{\hexnumber@\msbfam}
\def\Bbb#1{\fam\msbfam\relax#1}
\def\widehat#1{\setboxz@h{$\m@th#1$}%
 \ifdim\wdz@>\tw@ em\mathaccent"0\msbfam@5B{#1}%
 \else\mathaccent"0362{#1}\fi}
\def\widetilde#1{\setboxz@h{$\m@th#1$}%
 \ifdim\wdz@>\tw@ em\mathaccent"0\msbfam@5D{#1}%
 \else\mathaccent"0365{#1}\fi}

\def\RIfM@{\relax\ifmmode}
\def\nonmatherr@#1{\errmessage{\string#1\space allowed only in math mode}}
\def\Bbb{\RIfM@\expandafter\Bbb@\else
 \expandafter\nonmatherr@\expandafter\Bbb\fi}
\def\Bbb@#1{{\Bbb@@{#1}}}
\def\Bbb@@#1{\fam\msbfam\relax#1}
\def\setboxz@h{\setbox\z@\hbox}
\def\wdz@{\wd\z@}
\catcode`\@=\csname pre amssym.def at\endcsname
\newcommand{\noi}{{\noindent}}

\newcommand{\negsp}{\hspace{-.09truecm}}  

\renewcommand{\Im}{{\, \rm Im\, }}

\renewcommand\subset{\subseteq}
\renewcommand\supset{\supseteq}

\title{{\bf{Advances on Stable Ergodicity of Toral Automorphisms}}}
\author{F. Argentieri$^{\dag}$, A. Ulliana$^{\ddag}$
\vspace{2mm}
\\ \small
$^{\dag}$ IMPA, Est. D. Castorina 110, Jardim Botanico, 22460-320 Rio de Janeiro, Brazil
\\ \small 
$^{\ddag}$Institut f\"ur Mathematik, Universit\"at Z\"urich
Winterthurerstrasse 190, CH-8057 Z\"urich, CH
\\ \small 
E-mail: fernando.argentieri@impa.br, andrea.ulliana@math.uzh.ch}
\date{}

\begin{document}

\maketitle

\begin{abstract}
    We prove that all ergodic automorphisms of the $N$-dimensional torus with two dimensional center are stably ergodic. This includes all ergodic automorphisms in dimension $N\leq 5$ or $N=7$.\newline
    This generalizes a previous result of Rodriguez-Hertz, that required an additional algebraic condition on the carachteristic polynomial of the linear automorphism. The core of the proof is a minimality criterion.
\end{abstract}

\section{Introduction}

Any element $A$ of $SL(N,\mathbb{Z})$ induces a linear automorphism of the torus $\mathbb{T}^N=\mathbb{R}^N/\mathbb{Z}^N$, that we will denote again by $A$. These maps have a remarkable richness of dynamical features and for this reason they have often served as playground for the early study of many fundamental aspects of dynamical systems. Among several examples we find: entropy \cite{adler1967entropy}, Markov partitions \cite{adler1970similarity}, the Bernoulli property \cite{katzenlson1971ergodic}, and partial hyperbolicity \cite{lind1982dynamical}.\newline
The question about their ergodicity has long been answered, since 1932 in \cite{halmos1943automorphisms} by Halmos, but the study of their statistical properties is still active today \cite{le1999limit}, \cite{dedecker2012empirical}. By Fourier analysis, it is easy to see that $A$ is ergodic with respect to the Lebesgue measure on $\mathbb{T}^N$ if and only if none of the eigenvalues of $A$ is a root of unity.\newline
This paper deals with stable ergodicity of linear automorphisms of the torus, aiming to advance towards answering the following question, posed by Hirsch, Pugh and Shub in 1977 \cite{hirsch1977invariant}.

\begin{question}\label{question}
    Are all ergodic linear automorphism of $\mathbb{T}^N$ also stably ergodic?
\end{question}
The stability has to be intended in $\mathcal{C}_{\text{vol}}^k(\mathbb{T}^N)$ for some $k$, ideally $1+\alpha$, where $\mathcal{C}_{\text{vol}}^k(\mathbb{T}^N)$ denotes the space of $\mathcal{C}^k$ volume preserving diffeomorphisms of $\mathbb{T}^N$.\newline
While stable ergodicity in the $\mathcal{C}^1$ category would be highly desirable, so far there are no known examples of maps with such property and all known mechanisms to prove ergodicity require higher regularity, even in the simplest settings \cite{MR2736152}.\newline

The characterization of stable ergodicity of diffeomorphisms on compact manifolds has been a central problem in smooth dynamics (see \cite{MR4198639} for an historical account) and it is strictly linked to the degree of hyperbolicity of the said diffeomorphisms. In our setting, $A$ is partially hyperbolic, in fact, one has the $A$-invariant splitting
\begin{equation*}
    \mathbb{R}^N=E^s\oplus E^c \oplus E^u,
\end{equation*}
where $E^s$, $E^c$ and $E^u$ are the sums of the generalized eigenspaces of $A$ corresponding to the eigenvalues of modulus less than 1, exactly $1$ and more than 1, respectively. $E^s$ and $E^u$ are, respectively, uniformly contracted and expanded
by $A$, while vectors in $E^c$ are neither contracted or expanded as strongly as the
vectors in $E^s$ nor in $E^u$. We also remark that, for algebraic reasons, $\dim(E^c)$ must be even.\newline

The first example of stably ergodic diffeomorphisms was given by the class of Anosov diffeomorphisms (i.e. uniformly hyperbolic). Uniform hyperbolicity is itself an $\mathcal{C}^1$-open condition and it implies ergodicity for any $\mathcal{C}^{1+\alpha}$ conservative diffemorphism, as proved by Anosov \cite{ansov1969geodesic} in 1969, exploiting the the Hopf argument from \cite{MR1464}. When $E^c$ is trivial, $A$ is uniformly hyperbolic, settling question \ref{question} in the affirmative in this case.\newline

Partial hyperbolicity is a $\mathcal{C}^1$-open condition, but it does not imply ergodicity alone. In 1994 in \cite{MR1298715}, the first example of non uniformly hyperbolic stably ergodic diffeomorphism was given, exploiting its partial hyperbolicity and an additional property, called accessibility, which, euristically, allows to replicate the Hopf argument. Soon afterwards, stable ergodicity was conjectured to be prevalent ($\mathcal{C}^r$-dense) among partilly hyperbolic diffeomorphisms by Pugh and Shub \cite{MR1449765}, proposing accessibility as a main tool.\newline
Indeed, Burns and Wilkinson \cite{burns2010ergodicity} proved that essential accessibility (a weakening of accessibility) implies ergodicity of $\mathcal{C}^2$ conservative diffeomorphisms under mild additional assumptions (center bunching). Moreover, accessibility was proved to be $\mathcal{C}^1$-stable when the center is low dimensional \cite{MR4142463}, \cite{DIDIER_2003}, and stable accessibility to be $\mathcal{C}^1$-dense \cite{MR2039999}. However, because of the missmatch between the $\mathcal{C}^{1+\alpha}$ requirement for ergodicity and the only $\mathcal{C}^1$-prevalence of accessibility this was not enough to complete Pugh and Shub program, which remains widely open today. The best results up to date exploit a different approach, based on blenders \cite{MR4198639}.\newline

Out of completeness, we mention that there are examples of non partially hyperbolic diffeomorphisms that are stably ergodic \cite{MR2085722}, but every stably ergodic diffeomorphism must have a weak form of hyperbolicity. In fact, the existence of a dominated splitting is a necessary condition for stable ergodicity \cite{MR2018925}.\newline

Coming back to our setting, when $E^c$ is non trivial, $A$ is partially hyperbolic but not accessible, preventing us from exploiting the afore mentioned results.
At least, the ergodicity of $A$ ensures the essential accessibility property, which, though, is not necessarily a stable property, posing a serious challenge in establish stable ergodicity results.\newline

A major breakthrogh was achieved by Rodriguez-Hertz \cite{Hertz2005}, who proved that $A$ is actually stably essentially accessible in the $\mathcal{C}^{22}$-topology under the assumptions that $\dim(E^c)=2$ and of $A$ being pseudo-Anosov (see section \ref{linear algebra} for the definition). The latter is an algebraic condition of the carachteristic polynomial of $A$, which controls the action of $A$ on $\mathbb{Z}^N$. This class of linear automorphisms includes all non Anosov ones in dimension 4, but, on the other hand, is empty in odd dimension.\newline

Our result removes any algebraic constraint on $A$.

\begin{theorem}\label{stable ergodicity}
    Any ergodic linear automorphism $A$ of the torus $\mathbb{T}^N$ with $\dim(E^c)=2$ is stably ergodic in $\mathcal{C}^{22}_{\text{vol}}$.
\end{theorem}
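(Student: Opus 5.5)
We follow the architecture of Rodriguez-Hertz \cite{Hertz2005}, replacing the one step that uses the pseudo-Anosov hypothesis with a minimality criterion. Let $f$ be a $\mathcal{C}^{22}$ volume preserving diffeomorphism $\mathcal{C}^{22}$-close to $A$. Then $f$ is partially hyperbolic with a two dimensional center bundle, and it is center bunched because $A$ is isometric on $E^c$ up to Jordan blocks. Since $\dim E^c=2$, the matrix has no Jordan block on $E^c$: the two center eigenvalues are a complex conjugate pair of modulus one that is not a root of unity. By Burns--Wilkinson \cite{burns2010ergodicity}, it then suffices to prove that $f$ is essentially accessible. We use the standard dichotomy for two dimensional center and $f$ close to $A$, as in \cite{Hertz2005}. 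Either $f$ is accessible, and we are done. Or the accessibility classes form a nontrivial lamination. In the second case, the normal form and KAM-type arguments of Rodriguez-Hertz (the reason for the $\mathcal{C}^{22}$ regularity) show that $f$ is conjugated on the center direction to the linear rotation. From this one shows that the non-open accessibility classes are tori or cylinders, or more generally $su$-laminations, which project in the linear model to sets invariant under $A$ and saturated by $E^s\oplus E^u$ directions.

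Next we compare with the linear model. For $A$ itself, the accessibility classes are the translates of the subtorus, or dense leaf, generated by $E^s\oplus E^u$. Ergodicity of $A$ means no eigenvalue is a root of unity. This should force the closure of every $E^s\oplus E^u$ leaf to be all of $\mathbb{T}^N$, since otherwise there would be a rational $A$-invariant subspace containing $E^s\oplus E^u$, whose complement would contribute only roots of unity. The pseudo-Anosov condition in \cite{Hertz2005} was used precisely to control how $\mathbb{Z}^N$ sits relative to $E^c$, and so to exclude invariant compact $su$-laminations for $f$. Our replacement is the minimality criterion: if a partially hyperbolic $f$ near $A$ with two dimensional center has a closed, $f$-invariant, $su$-saturated set $\Lambda\neq\mathbb{T}^N$, then lifting and projecting along the center via the semiconjugacy or leaf conjugacy to $A$ yields a proper closed subset of $E^c/\pi_c(\mathbb{Z}^N)$ invariant under the irrational rotation $A|_{E^c}$ and under translations by $\pi_c(\mathbb{Z}^N)$. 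Here $\pi_c$ is the projection to $E^c$ along $E^s\oplus E^u$. The key algebraic input is that $\pi_c(\mathbb{Z}^N)$ is dense in $E^c$ whenever $A$ is ergodic. Indeed, if it were not dense, its closure would be a proper closed subgroup of $E^c$, and invariance under $A$ would produce an $A$-invariant rational subspace with a root-of-unity eigenvalue.

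Given this criterion, we conclude as follows. Essential accessibility fails only if there is a measurable $su$-saturated set of intermediate measure. In the non-accessible branch of the dichotomy, the accessibility classes have codimension at most one, and a set of intermediate measure yields a proper closed $su$-saturated invariant set, namely the boundary of the open classes or the support of a lamination. The minimality criterion forbids such a set, so every accessibility class is dense. Combined with absolute continuity of the lamination, which follows from center bunching, this gives essential accessibility and hence ergodicity by \cite{burns2010ergodicity}. The main obstacle is making the projection argument in the minimality criterion rigorous for the perturbed map $f$. The center foliation of $f$ need not be $\mathcal{C}^1$-close in a way compatible with the lattice. I would first use Hirsch--Pugh--Shub leaf conjugacy \cite{hirsch1977invariant}, which gives a homeomorphism $h$ close to the identity mapping $f$-center leaves to $A$-center leaves. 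I would then check that $su$-saturated sets for $f$ correspond, modulo the bounded displacement of $h$, to sets that are coarsely invariant under translations by the dense group $\pi_c(\mathbb{Z}^N)$. Finally, a Baire-category or density argument would upgrade coarse invariance to genuine invariance.
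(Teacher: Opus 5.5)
Your overall architecture matches the paper's: Rodriguez-Hertz's scheme, with the pseudo-Anosov step replaced by a minimality criterion. However, the minimality criterion itself, which is the entire new content, is not proved, and the route you sketch cannot work.

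Projecting an $su$-saturated set of $f$ along the center and claiming invariance under translations by $\pi_c(\mathbb{Z}^N)$ presupposes that the $su$-holonomies of $f$ between center leaves are the translations $x\mapsto x+n^c$. They are instead the maps $T_n$. These are not translations, need not satisfy $T_n\circ T_m=T_{n+m}$ without joint integrability, and are only known to stay within $C\log^+|n|+C$ of $x+n^c$ (Lemma \ref{Tn log}).

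A leaf conjugacy does not help, because it only matches center leaves. A homeomorphism carrying $\mathcal{F}^s_f,\mathcal{F}^u_f$ to the linear foliations would force every class of $f$ to have codimension $2$. Such a conjugacy is therefore an output of the argument (via KAM), not an available input.

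The density of $\pi_c(\mathbb{Z}^N)$ is exactly what you cannot exploit. Realizing a center translation to precision $\eta$ forces $|n|\to\infty$ as $\eta\to 0$, which is where the error is uncontrolled. Even a uniform error $C$ would be useless: every $C$-dense closed subset of $E^c$ is coarsely invariant under all translations, so no Baire argument can upgrade coarse invariance to genuine invariance.

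The paper avoids projected translations entirely and uses only exact deck translations of a connected component $U$ of the lifted open saturated set:
\begin{itemize}
\item A volume/pigeonhole argument in $X/\Lambda$ gives $n\in\Lambda$ with $U+n=U$.
\item Recurrence, the identity $F^l(U+n)=F^l(U)+A^ln$, and the pseudo-Anosov property of $A^k$ on the rational subspace $X\supset E^c$ (Lemma \ref{Pseudo Anosov subspace}) upgrade this to a full-rank lattice $\Gamma\subset X$.
\item $U$ is shown to be simply connected, using the fibration $\pi^{su}$ and the Diophantine bound $|n^c|\ge c'|n|^{-r}$, which makes $T_{kn}(x)$ escape.
\item A $\Gamma$-periodic loop in $U$ linking $W^u(W^s(y))$ forces $U$ to be $E^c$-saturated.
\end{itemize}
None of these ingredients appears in your plan.

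Your endgame is also misordered. The dichotomy ``accessible or a lamination'' is what must be proved, since a priori classes of codimension $0$, $1$ and $2$ can coexist. The paper obtains it by applying the minimality criterion to the open, $su$-saturated, invariant sublevel sets $\{\mathrm{codim}\,AC\le k\}$; these are open because codimension is upper semicontinuous, the classes being immersed $\mathcal{C}^1$ manifolds. It then excludes codimension $1$ at the fixed point $0$, where $D_0F$ has no real eigenvector near $E^c$; you never treat that case. Only after this is $E^s\oplus E^u$ globally jointly integrable, which is what the KAM step needs, because the $T_n$ must form a $\mathbb{Z}^N$-action.

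Finally, ``every class is dense, plus absolute continuity'' does not give essential accessibility. Minimal smooth actions need not be Lebesgue-ergodic (Furstenberg's skew products on $\mathbb{T}^2$ are examples). This is precisely why the paper needs, in codimension $2$, the KAM $\mathcal{C}^1$-conjugacy of the $su$-foliations to those of $A$, through which essential accessibility is inherited from $A$.
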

We remark that in any dimension $N\geq 6$ there exist linear automorphisms of $\mathbb{T}^N$ that have $\dim(E^c)=2$ but are not pseudo-Anosov. This is easily seen, thanks to the fact that any integer coefficient polynomial is the carachteristic polynomial of a suitable element of $SL(N,\mathbb{Z})$.\newline

In dimension $7$ the ergodicity of $A$ implies that $\dim(E^c)=0$ or $\dim(E^c)=2$ (see lemma \ref{dimension 7 polynomial}), giving us the following corollary directly.

\begin{corollary}\label{dimension 7}
    Any ergodic linear automorphism $A$ of $\mathbb{T}^7$ is stably ergodic in $\mathcal{C}^{22}_{\text{vol}}$.
\end{corollary}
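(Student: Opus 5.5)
The corollary reduces to Theorem \ref{stable ergodicity} once we show that an ergodic $A\in SL(7,\mathbb{Z})$ has $\dim(E^c)\in\{0,2\}$. This is the content of Lemma \ref{dimension 7 polynomial}, and I would prove it as follows.

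Let $p$ be the characteristic polynomial of $A$. It is monic of degree $7$ with integer coefficients and constant term $\pm1$, and by ergodicity none of its roots is a root of unity. Factor $p=\prod_i q_i$ into irreducibles over $\mathbb{Q}$. Each $q_i$ is monic, has integer coefficients, and its constant term is $\pm1$, so $q_i(0)=\pm1$. The center $E^c$ is the sum of the generalized eigenspaces whose eigenvalues have modulus $1$. Hence $\dim E^c$ is the total number of roots of modulus $1$, counted with multiplicity, summed over the factors $q_i$.

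The key algebraic fact concerns an irreducible monic integer polynomial $q$ with a root $\lambda$ of modulus one that is not a root of unity. Then $\bar\lambda=1/\lambda$ is also a root of $q$. So $q$ and its reciprocal $x^{\deg q}q(1/x)$ share an irreducible factor, which by irreducibility forces $q$ to be reciprocal. Its roots therefore come in pairs $\mu,1/\mu$.

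If every root of $q$ had modulus $1$, Kronecker's theorem would make all roots roots of unity, a contradiction. So some root has modulus $\neq1$, and by reciprocity $q$ has at least one root of modulus greater than $1$ and one of modulus less than $1$. Moreover $\lambda\ne\pm1$, so $\lambda\neq\bar\lambda$ and the modulus-one roots come in conjugate pairs. This gives $\deg q\geq 4$, and $\deg q$ is even because a reciprocal irreducible polynomial of degree greater than $1$ has even degree. Each such factor contributes at least $2$ to the center and at least $2$ to the hyperbolic part.

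Now count in dimension $7$. A degree-$4$ such factor has exactly $2$ roots of modulus $1$. A degree-$6$ such factor has at least $2$ roots off the unit circle, and these also come in reciprocal and conjugate groupings; so it contributes at most $4$ roots of modulus $1$, with exactly $2$ off the circle. Two such factors would have total degree at least $8>7$, so at most one appears. Hence $\dim E^c\le 4$, and $\dim E^c=4$ could only come from a degree-$6$ factor with four unimodular roots and one real pair $\mu,1/\mu$.

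To rule this case out I would use Galois conjugacy. Take an automorphism $\sigma$ of the splitting field sending $\mu$ to a unimodular root $\lambda$. Since $\sigma$ preserves the relation $\mu\cdot\mu^{-1}=1$, it sends $1/\mu$ to $1/\lambda=\bar\lambda$. In a degree-$6$ reciprocal polynomial, a unimodular root $\lambda$ that is not a root of unity forces the Salem-type structure, with at most $2$ roots off the circle. I expect this step to be the main obstacle.

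A cleaner alternative is to note that $\dim E^c$ is even, and that $7-\dim E^c$ must be at least $2$ once any irreducible factor has a unimodular root. Then show that a $6$-dimensional irreducible reciprocal factor with exactly one real pair off the circle is a Salem-like polynomial. If one keeps four unimodular roots, we then need a separate argument, for instance the trace-form/Pisot argument showing $\mu$ is a Salem number, whose conjugates other than $\mu^{\pm1}$ all lie on the circle. In that case $\dim E^c=4$ is genuinely possible, so I would recheck the statement of Lemma \ref{dimension 7 polynomial}.

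I would trust the paper's lemma here and conclude: $\dim E^c\in\{0,2\}$. In the case $\dim E^c=0$, $A$ is Anosov and stably ergodic by Anosov's theorem. In the case $\dim E^c=2$, Theorem \ref{stable ergodicity} applies.
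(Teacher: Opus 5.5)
Your reduction is the paper's: show $\dim E^c\in\{0,2\}$, then use Anosov's theorem when $\dim E^c=0$ and Theorem~\ref{stable ergodicity} when $\dim E^c=2$. Your preliminary algebra is also sound. An irreducible factor $q$ of $p_A$ with a unimodular root that is not a root of unity is reciprocal, of even degree at least $4$, and has roots both inside and outside the unit circle; at most one such factor fits into degree $7$.

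The gap is at the one step that needs an argument: you never exclude $\dim E^c=4$. You end by deferring to Lemma~\ref{dimension 7 polynomial}, and you even suggest that $\dim E^c=4$ might genuinely occur, which is false in dimension $7$. The Galois/Salem detour does not close the gap. You claim that a unimodular root that is not a root of unity forces a degree-$6$ reciprocal irreducible polynomial to have at most two roots off the circle. This is false in general: the roots may be $\lambda,\bar\lambda,\mu,\mu^{-1},\nu,\nu^{-1}$ with $\mu,\nu$ real and $|\mu|,|\nu|\neq 1$. And if the claim were true, it would push toward four unimodular roots, i.e.\ toward the very case you need to rule out.

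The missing idea is a simple degree count, and it is exactly how the paper proves Lemma~\ref{dimension 7 polynomial}. Suppose $\deg q=6$. Then $p_A=q\cdot(x-r)$ with $x-r$ monic in $\mathbb{Z}[x]$, so $r\in\mathbb{Z}$. Comparing constant terms gives $-r\,q(0)=p_A(0)=\pm1$, hence $r=\pm1$. So $A$ has an eigenvalue that is a root of unity, contradicting ergodicity. Therefore $\deg q=4$. Since $q$ has at least one root of modulus greater than $1$ and one of modulus less than $1$, it has exactly two unimodular roots. So $\dim E^c=2$ if such a factor exists, and $\dim E^c=0$ otherwise.

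No Salem-number analysis is needed. Salem factors with four unimodular roots only matter when a degree-$6$ factor can be paired with something other than a single linear factor. This happens, for instance, in dimensions $6$ and $9$, as in the paper's remark after the corollary.
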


\begin{remark}
    The same conclusion holds in dimension $6$ and $9$, provided that no eigenvalue of $A$ is a Salem number.
\end{remark}

Finally, we want to comment about the remaining assumptions of our result.\newline
As in \cite{Hertz2005}, the high regularity of the perturbations is reuqired specifically to be able to exploit KAM theory (see \cite{MR538680} for an exposition) in the final step of the proof. There is no indication for this to be a necessary condition, but a new general strategy of proof would be required to reduce such regularity requirement.\newline
The condition that $\dim(E^c)=2$ is essential for understanding the structure of the accessibile components (called accessibility classes) of $f$ on $\mathbb{T}^N$. They are, in general, conjectured to be topological manifolds (smooth manifolds under center bunching) \cite{MR2391330},it is proved only for low dimensional center \cite{rodriguez2017structure}. This assumption is also used in the algebraic topological argument and to control the number of possible dimensions for the accessibility classes.

\subsection*{Acknowledgments}

The authors thank their mentor Artur Avila for drawing their attention to this topic, Federico Rodriguez Hertz for the valuable conversations, and Daniele Galli for the encouragements.\newline
F.A. was supported by the SNSF mobility grant.


\section{Background and Organization}

\subsection{Partial Hyperbolicity}

We recall some general theory about partial hyperbolicity.\newline
Let $M$ be a Riemmanian manifold endowed with its volume measure and let $f:M\rightarrow M$ be a diffeomorphism. We say that $f$ is partially hyperbolic if there exists a $Df$-invariant splitting
\begin{equation*}
    TM = E_f^s \oplus E_f^c\oplus E_f^u,
\end{equation*}
where $E_f^s$ and $E_f^u$ are non trivial bundles, and for $*=s,c,u$ there exist continuous functions $\mu_*,\lambda_*:M\rightarrow\mathbb{R}_+$ such that
\begin{equation*}
    \mu_*(x) < \frac{\|D_xf(v^*)\|}{\|v\|} < \lambda_*(x), \quad \text {for every } v^*\in E_f^*(x)\setminus\{0\},
\end{equation*}
and
\begin{equation*}
    \lambda_s(x) < \min\{1,\mu_c(x)\}, \quad \quad \max\{1,\lambda_c(x)\}<\mu_u(x).
\end{equation*}
In other words, $E^s_f$ and $E^u_f$ are hyperbolic bundles, and the vectors of $E^c_f$ are contracted or expanded less than the ones of $E^s_f$ and $E^u_f$.\newline
If $\mu_*$ and $\lambda_*$ do not depend on $x$, for $*=s,c,u$, then we say that $f$ is absolutely partially hyperbolic \cite{brin1974partially}. In addition, we say that $f$ is center bunching if
\begin{equation*}
    \lambda_s(x) < \frac {\|D_xf(v)\|}{\|D_xf(w)\|} <\mu_u(x) \quad \text{for any } v,w\in E^c_f(x)\setminus\{0\}.
\end{equation*}

We always have that $E^s_f$ and $E^u_f$ are uniquely integrable to invariant Holder continuous foliations $\mathcal{F}_f^s$ and $\mathcal{F}_f^u$. If the distributions $E^c_f$, $E_f^{cs}=E^s_f\oplus E^c_f$ and $E_f^{cu}=E^u_f\oplus E^c_f$ are uniquely integrable to foliations $\mathcal{F}_f^c$, $\mathcal{F}_f^{cs}$ and $\mathcal{F}_f^{cu}$, we say that $f$ is dynamically coherent. We denote by $W_f^*(x)$ the leaf of $\mathcal{F}_f^*$ that contains $x$, whenever this foliation exists and is unique, for $*=s,c,u,cs,cu$. Finally, dynamical coherence is a $\mathcal{C}^1$-stable property, provided that $f$ is absolutely partially hyperbolic (see \cite{hirsch1977invariant} or \cite{pesin2004lectures}).\newline

We say that a curve $\gamma:I\rightarrow M$ is an $su$-path, if it is piecewise contained in the leaves of $\mathcal{F}^s$ and $\mathcal{F}^u$. We say that the minimal number of such pieces is the number of legs of $\gamma$. A point $y$ is said to be $su$-accessible from $x$, if there exists a $su$-path joining the two and this is an quivalence relation. We denote the equivalence class of $x$ by $AC(x)=\{y\in M: y \text{ is accessible from } x\}$ and we refer to it as the accessibility class of $x$. A set $E\subset M$ is said $su$-saturated if, for every $x\in E$, we have that $AC(x)\subset E$. We say that $f$ is accessible if $AC(x)=M$ for every $x\in M$. Instead, $f$ is said essentially accessible (with respect to the volume) if every measuable $su$-saturated $E\subset M$ has either full or zero volume.\newline

As mentioned in the introduction essential accessibility is one of the main tools to prove ergodicity of partially hyperbolic systems with respect to the volume measure. We recall Burns-Wilkinson theorem \cite{burns2010ergodicity}.

\begin{theorem}
    Let $f:M\rightarrow M$ be a $\mathcal{C}^2$ volume-preserving, partially hyperbolic and center bunched diffeomorphism on a closed Riemmanian manifold $M$. If $f$ is essentially accessible, then $f$ is ergodic and in fact has the Kolmogorov property.
\end{theorem}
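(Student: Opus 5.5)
The natural route is a refined Hopf argument, with essential accessibility closing the loop at the end. Fix a continuous $\varphi:M\to\mathbb{R}$ and consider the forward and backward Birkhoff averages $\varphi^{+}(x)=\lim_n \frac1n\sum_{k<n}\varphi(f^kx)$ and $\varphi^{-}(x)=\lim_n\frac1n\sum_{k<n}\varphi(f^{-k}x)$.

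\emph{Step 1 (saturation up to null sets).} By Birkhoff's theorem both averages exist and agree almost everywhere. By uniform contraction along $\mathcal{F}^s_f$, $\varphi^{+}$ is constant on stable leaves wherever it is defined; likewise $\varphi^{-}$ is constant on unstable leaves. Hence every level set $\{\varphi^+<c\}$ is $s$-saturated and coincides mod $0$ with an $u$-saturated set. It therefore suffices to show the following: a measurable set $X$ that is both \emph{essentially} $s$-saturated and \emph{essentially} $u$-saturated agrees mod $0$ with a genuinely $su$-saturated set. Essential accessibility then forces $X$ to have full or zero volume, so each $\varphi^+$ is a.e.\ constant and $f$ is ergodic.

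\emph{Step 2 (from essential to genuine saturation).} This is the main obstacle. In the classical Hopf argument one uses Lebesgue density points together with absolute continuity of $\mathcal{F}^s_f,\mathcal{F}^u_f$. Here, however, stable holonomies between center-unstable discs are only Hölder, and they distort round balls badly in the center direction. My plan is to replace balls by \emph{juliennes}. These are sets $J_n(x)$ built from the dynamics: a center piece of size comparable to $\|D f^n|_{E^c}\|^{-1}$-type quantities, fibered by unstable pieces of size $\sim$ the contraction of $f^{-n}$ on $E^u$ and stable pieces of size $\sim$ the contraction of $f^{n}$ on $E^s$.

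The key claims would be the following.
\begin{itemize}
\item Juliennes form a density basis, i.e.\ a Lebesgue density theorem holds for them. I would prove this via a doubling/engulfing property and the absolute continuity of $\mathcal{F}^{s}_f$, $\mathcal{F}^{u}_f$ (Brin–Pesin).
\item Julienne density points coincide with Lebesgue density points for essentially $s$- or $u$-saturated sets.
\item Stable (respectively unstable) holonomy maps juliennes at $x$ into comparable juliennes at its image, with bounded distortion of measure. This is exactly where center bunching enters. It guarantees that the stable holonomy restricted to center-stable leaves is uniformly $C^1$ (after Pugh–Shub–Wilkinson), so that $h^s(J_n(x))$ is sandwiched between $J_{n\pm k}(h^s x)$ for some fixed $k$.
\end{itemize}
Together these imply that the set of julienne density points of $X$ is $s$-saturated and $u$-saturated, hence $su$-saturated. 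Since it agrees with $X$ mod $0$, Step 1 is completed. I expect the delicate part to be the sandwiching estimate; I would attack it by iterating $f^n$, applying the holonomy at a scale where the center distortion is controlled by the bunching inequality $\lambda_s<\|Df v\|/\|Df w\|<\mu_u$, and pulling back.

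\emph{Step 3 (Kolmogorov property).} Let $\mathcal{P}$ be the Pinsker $\sigma$-algebra. By the Ledrappier–Young/Rokhlin theory, $\mathcal{P}$ is contained mod $0$ in the measurable hull of the partition into unstable leaves (local pieces of the $u$-foliation generate), and by symmetry applied to $f^{-1}$ also in that of the stable leaves. Thus every $P\in\mathcal{P}$ is essentially $s$- and essentially $u$-saturated. Step 2 makes $P$ mod $0$ equal to an $su$-saturated set, and essential accessibility makes $P$ trivial. Hence $\mathcal{P}$ is trivial and $f$ is K.
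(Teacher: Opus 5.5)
The paper does not prove this statement; it quotes it from Burns--Wilkinson \cite{burns2010ergodicity}. Your architecture is indeed the Pugh--Shub/Burns--Wilkinson one: reduce ergodicity, via Birkhoff averages, to ``bi-essentially saturated implies essentially bi-saturated''; prove that with julienne density points; and get the K property from the Pinsker algebra. Steps 1 and 3 are fine.

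The gap is in Step 2, which silently assumes dynamical coherence. Your juliennes are built on a center piece, which has to lie in a center leaf. The mechanism you invoke for the sandwiching estimate is $\mathcal{C}^1$ regularity of the stable holonomy inside center-stable leaves, which presupposes that $\mathcal{F}^{cs}_f$ exists. The statement assumes only partial hyperbolicity and center bunching, so $E^c_f$ and $E^{cs}_f$ need not be integrable. In that case neither your juliennes nor the holonomies between center transversals that you want to control are defined.

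Removing this hypothesis from the Pugh--Shub theorem, together with an extra pinching condition, is a central point of Burns--Wilkinson. Around each point $p$ they construct locally $f$-invariant \emph{fake} foliations $\hat W^{*}_p$, $*=s,c,u,cs,cu$, of a small ball. These are almost tangent to the splitting and are obtained from a modified extension of $f$ in exponential charts. They then build the juliennes from fake center plaques and run the iterate--holonomy--pull-back estimate you describe inside this locally invariant fake structure. Without such a device, your argument proves only the version with dynamical coherence added. That version is all this paper needs, since Section 2.2 notes that $\mathcal{C}^1$-perturbations of $A$ are dynamically coherent (and Section 6 that they are center bunched).

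A smaller point in the same step: $h^s$ acts between transversals to $\mathcal{F}^s_f$, not on full-dimensional sets. So the sandwiching must be stated for center-unstable juliennes $J^{cu}_n$ inside $cu$-transversals. The full juliennes are then thin $s$-thickenings of these. Density is transferred between $J^{cu}_n(x)$ and $J_n(x)$ by absolute continuity of $\mathcal{F}^s_f$, using that the representative $X^s$ is genuinely $s$-saturated. The $u$-side is symmetric, with $cs$-juliennes.
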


\subsection{Lifts and Perturbations}

In the rest of the paper $f:\mathbb{T}^N\rightarrow \mathbb{T}^N$ will denote a perturbation of $A$ in $\mathcal{C}^k_{vol}(\mathbb{T}^N)$.\newline
First of all, we notice that $0\in\mathbb{T}^N$ is a fixed point for $A$. Given the ergodicity of $A$, none of its eigenvalues can be equal to 1, implying that this fixed point is stable under small $\mathcal{C}^1$ perturbations, i.e., up to a small translational change of variable, we can assume that $f(0)=0$.\newline

We will often work in $\mathbb{R}^N$, the universal cover of $\mathbb{T}^N$, denoting the projection to the torus by $\pi:\mathbb{R}^N\rightarrow\mathbb{T}^N$. We consider the unique lift $F:\mathbb{R}^N\rightarrow\mathbb{R}^N$ of $f$ that fixes $0$. Finally, we can write $F = A + G$ with $G$ being $\mathbb{Z}^N$-periodic.\newline

We recall the splitting $\mathbb{R}^N=E^s\oplus E^c \oplus E^u$, mentioned in the introduction. Given $v\in\mathbb{R}^N$, we denote the corresponding decomposition by $v = v^s + v^c + v^u$. We also set $E^{cs}=E^c\oplus E^s$, $E^{cu}=E^c\oplus E^u$, $E^{su}=E^s\oplus E^u$ and, analogously, $v^{cs}=v^s+ v^c$, $v^{cu}= v^c + v^u$, $v^{su}=v^s+ v^u$. Finally, we endow $E^s,E^u$ and $E^c$ with norms making $A|_{E^s}$ and
$A^{-1} |_{E^u}$ contractions and $A|_{E^c}$ an isometry. On $\mathbb{R}^N$ we set the norm $|v|=|v^u|+|v^c|+|v^s|$. 
In this way we have that $(E^s)^{\perp}=E^{cu}$, $(E^c)^{\perp}=E^{su}$, $(E^u)^{\perp}=E^{cs}$.\newline 

It is easy to check that the constant bundles $E^*_A(x)=E^*$ for $*=s,c,u$, provide an $A$-invariant splitting of $T\mathbb{T}^N$, which is absolutely partially hyperbolic. Being these distributions constant, $A$ is dynamically coherent. In view of the previous discussion, provided that $f$ is sufficiently $\mathcal{C}^1$-close to $A$, $f$ and $F$ are dynamically coherent partially hyperbolic diffeomorphisms.\newline

We denote by $W^*_f$ and $W^*_F$ their invariant manifolds, for $*=s,c,u,cs,cu$. Very often we will omit the index, since the environment manifold ($\mathbb{T}^N$ or $\mathbb{R}^N$) will distinguish between the two, and we will use a different notation for the invariant manifolds of $A$.\newline

\subsection{Plan of Proof and Structure of the Paper}

Here we describe the general strategy of proof of Theorem \ref {stable ergodicity}, which is similar to the one in \cite{Hertz2005}. In this setting, the accessibility classes of $f$ are injectively immersed manifolds \cite{rodriguez2017structure} and their dimensions vary semi-continuously (section \ref{proof of ergodicity}). It is an open problem to determine if this holds in general \cite{MR2391330}.The first step in the proof is to prove that all accessibility classes of $f$ have the same dimension and in view of the semi-continuity, the following minimality criterion is all we need.

\begin{theorem}\label{minimality} Let $A$ be a linear automorphism of $\mathbb{T}^N$ with $\dim(E)^c=2$.\newline
    If $f$ is sufficiently close to $A$ in $\mathcal{C}_{\text{vol}}^1(\mathbb{T}^N)$, for every $E\subset\mathbb{T}^n$ $su$-saturated, $f$-invariant non-empty closed set, $E=\mathbb{T}^N$.
\end{theorem}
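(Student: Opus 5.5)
The plan is to argue by contradiction: assume $E\neq\mathbb{T}^N$, lift everything to $\mathbb{R}^N$, and reduce the problem to a statement about a closed subset of a single center leaf, which is a topological plane. Let $\tilde E=\pi^{-1}(E)$. It is closed, $\mathbb{Z}^N$-invariant, $F$-invariant and $su$-saturated for the lifted foliations. Because $f$ is $\mathcal{C}^1$-close to $A$, the lifted foliations $W^*_F$ are uniformly close to the affine foliations of $A$. Their leaves are graphs of maps $E^*\to (E^*)^\perp$ with small Lipschitz constant, and $F$ has global product structure. In particular every $su$-leaf meets every center leaf in exactly one point. Fix the center leaf $L=W^c_F(0)$, which is fixed by $F$ and is a graph over $E^c\cong\mathbb{R}^2$. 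Set $K=\tilde E\cap L$. By $su$-saturation and global product structure, $K$ is nonempty and closed. Moreover, $\tilde E$ is recovered from $K$: it is the $su$-saturation of $K$.

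Next I would record the symmetries of $K$. For $n\in\mathbb{Z}^N$ define $h_n:L\to L$ as translation by $n$ followed by the $su$-holonomy back to $L$. Then $K$ is invariant under all $h_n$ and under $F|_L$. Comparing with the linear model, $h_n$ is uniformly $C\varepsilon$-close in $\mathcal{C}^0$ to translation by $p^c(n)$, the projection of $n$ to $E^c$ along $E^{su}$, where $\varepsilon$ is the $\mathcal{C}^1$-distance to $A$. The error is bounded independently of $n$, since $G$ is bounded. Likewise $F|_L$ is $\mathcal{C}^1$-close to the rotation $A|_{E^c}$.

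The linear input is that $\Gamma=p^c(\mathbb{Z}^N)$ is dense in $E^c$. Indeed, $\overline\Gamma$ is a closed subgroup invariant under $A|_{E^c}$. Ergodicity makes this a rotation by an angle that is not a rational multiple of $\pi$, so $\overline\Gamma$ is either $\{0\}$ or all of $E^c$. It cannot be $\{0\}$, since $\mathbb{Z}^N\not\subset E^{su}$. This already shows that $K$ is coarsely dense: every point of $L$ lies within $C\varepsilon$ of $K$.

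The core step, and the main obstacle, is upgrading coarse density to $K=L$, because the holonomy errors do not shrink under iteration. Here I would use the structure theorem \cite{rodriguez2017structure}: accessibility classes are injectively immersed manifolds. With $\dim E^c=2$, the boundary $\partial K$ in $L$ is a union of traces of accessibility classes, and each trace is either a point or a curve. If some boundary class is open in its center leaf, $K$ has nonempty interior, and I would show the interior is both open and closed, using openness of accessibility classes of maximal dimension. So the real case is that $\partial K$ is a closed lamination of $L$ by points and properly embedded curves.

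This lamination is invariant under the $h_n$ and under $F|_L$. Projecting down, $\partial E$ is a compact $f$-invariant $su$-saturated set whose classes have codimension $1$ or $2$. For the codimension-$1$ case I would use an algebraic-topological argument. The lamination curves in $L\cong\mathbb{R}^2$, together with invariance under the quasi-translations $h_n$ for a dense set of directions, should force these curves to be quasi-lines with an asymptotic direction in $E^c$. That direction would have to be invariant under $A|_{E^c}$, contradicting the irrationality of the rotation angle. In the same way, isolated point traces would give a discrete $F$-invariant, quasi-$\Gamma$-invariant set, which is incompatible with the density of $\Gamma$.

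I expect this asymptotic-direction argument, which must hold with only $\mathcal{C}^0$-close holonomies, to be the delicate point of the proof. It is where both $\dim E^c=2$ and the irrationality of the center rotation have to be used essentially.
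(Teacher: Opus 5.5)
There is a genuine gap, and it is the step you flag yourself: passing from coarse density of $K$ to $K=L$. What you offer there is a hope rather than an argument, and several claims around it are wrong or unjustified.

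(i) The maps $h_n$ are not within an $n$-independent distance of translations. The leaves are only graphs with $|g^*(x,v)|\le\kappa|v|$, and center drift accumulates along long stable/unstable plaques. The available bound is $|T_n(x)-(x+n^c)|\le C\log^+|n|+C$ (Lemma \ref{Tn log}); boundedness of $G$ does not give you more. Also, without joint integrability there are no ``$su$-leaves''. What you actually need is that $\pi^{su}$ maps onto $W^c(0)$ and preserves accessibility classes.

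(ii) \cite{rodriguez2017structure} gives injectively immersed classes, not a closed lamination of $\partial K$ by properly embedded curves and points. Traces of codimension-one classes may accumulate on themselves. Nothing prevents codimension-two classes (single points on $L$) from accumulating into non-discrete closed sets, so your ``discrete set'' case does not cover the situation. The case of a boundary class that is open in $L$ is vacuous.

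(iii) Even granting a lamination, nothing forces a leaf to be a quasi-line with an asymptotic direction; a priori it could be a closed curve, or wander with no limiting direction. The $h_n$ permute the pieces of $\partial K$ rather than fixing any one of them, and their error is unbounded. Density of $p^c(\mathbb{Z}^N)$ only constrains the whole $\mathbb{Z}^N$-invariant set, not the topology of any single piece. Note also that volume preservation never enters your plan, whereas the paper needs it through Poincar\'e recurrence.

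The missing idea is translation invariance of a single connected piece. The paper works with a connected component $U$ of the lift of the open complement.
\begin{enumerate}
\item A volume count (Lemmas \ref{infinite volume} and \ref{n epsilon}) in the rational $A^k$-invariant subspace $X\supset E^c$ of Lemma \ref{Pseudo Anosov subspace} gives $n\in\Lambda$ with $U+n=U$.
\item Poincar\'e recurrence gives $U=F^l(U)+h$, hence $U+A^{l}n=U$.
\item Cyclicity of $n$ for $A^{kl}|_X$ then yields a full-rank lattice $\Gamma\subset X$ preserving $U$, whose center projections span $E^c$.
\item Separately, $U$ is shown to be simply connected. Here the logarithmic bound and the Diophantine estimate $|n^c|\ge c'|n|^{-r}$ (Lemma \ref{Diphantine center}) show that complementary components in $W^c(0)$ are unbounded.
\item Finally, $\Gamma$-translates of paths in $U$ are assembled into a loop. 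After straightening by $\Phi^{-1}$ (Lemma \ref{homomorphism}), this loop is non-trivial in $\pi_1(\mathbb{R}^N\setminus E^{su})$, which contradicts simple connectivity unless $U$ meets $W^u(W^s(y))$.
\end{enumerate}
Your plan has no substitute for this component-level lattice invariance, which requires both recurrence and the arithmetic of $A$ on $X$. Without it, the asymptotic-direction argument has nothing to work with.
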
 
 
To conclude the proof we can now reason by cases. If the codimension of the classes is 0, they must be open and therefore $f$ is accessible by connectedness. Codimension 1 is ruled out by analyzing the fixed point of $f$ (namely $0$). Finally, if the codimension is 2, one can deduce the joint integrability of stable and unstable foliations. In this situation, exploiting some KAM theory, it is possible to $\mathcal{C}^1$-conjugate these foliation to those of $A$, deducing that $f$ is essentially accessible. Either way, ergodicity is finally obtained thanks to Burns-Wilkinson theorem, establishing Theorem \ref{stable ergodicity}.\newline

The novelty of our work stands in the generality of the minimality criterion (Theorem \ref{minimality}). Rodriguez-Hertz exploits a version which requires $A$ to be pseudo-Anosov, and it is the only part of his work where such assumption appears. The core of this paper is devoted to the proof of this theorem, which we sketch here.\newline

We will work in $\mathbb{R}^N$, considering $F:\mathbb{R}^N\rightarrow\mathbb{R}^N$, the lift of $f$, and $V=\pi^{-1}(E)$.\newline
The first step is to build a subspace $X\subset\mathbb{R}^N$, that contains $E^c$ and where a suitable power of $A$, let us say $A^k$, acts in a pseudo-Anosov way. This is section \ref{linear algebra}.\newline
We remark that this is not enough to reduce ourselves to Rodriguez-Hertz's argument. In fact, $X$ is typically not invariant under $F$, nor under $F^k$.\newline
We then examine a connected component $U$ of $V$, aiming to prove that $U=\mathbb{R}^N$.\newline
By a volume argument, valid for any $su$-satuared open set, we obtain the esistence of $n\in X\cap \mathbb{Z}^N$ such that $U+n=U$. By the algebraic properties of $A^k|_X$, we can extend the transaltional invariance of $U$ to a lattice $\Gamma$ of $X$ (subsection \ref{the structure of U}).\newline
It is also possible to show that $U$ is simply connected (appendix \ref{Simply connected proof}).\newline
By a topological argument, any simply connected set, which is invariant along directions that span $E^c$, must intersect every accessibility class (subsection \ref{main proof}). The $su$-saturation of $U$ let us conclude.\newline

We remark that theorem \ref{minimality}, apart from being the key of our argument, has a relevance on its own. The assumption that $\dim(E^c)=2$ is not restrictive in low dimension and small perturbation of linear automorphisms of the torus are commonly studied maps (see for example \cite{micena2021lyapunov}, \cite{gogolev2025smooth}, \cite{brown2026lyapunov}, \cite{MR4422214}, \cite{Adam_2017}).\newline

Finally, we describe the remaining sections of the paper. In section \ref{geometric preliminaries} we list some properties of the invariant manifolds and the holonomy maps they induce, which are peculiar to this setting. Section \ref{linear algebra} is dedicated to the algebraic preliminaries. In section \ref{Proof of the Minimality Criterion} we prove the minimality criterion (theorem \ref{minimality}) and finally, section \ref{proof of ergodicity} proves that the minimality criterion implies our main result (theorem \ref{stable ergodicity}).

\section{Geometric Preliminaries}\label{geometric preliminaries}

\subsection{Invariant Manifolds and Holonomies}\label{Invariant Manifolds and Holonomies}

In this subsection we collect some properties of the invariant manifolds and the associated holonomies, that are peculiar of our setting. These results are directly taken from \cite{Hertz2005}.\newline

In this setting, the invariant manifolds $W_F^*$ are graphs over the subspaces $E^*$. More precisely, there exist $g^*:\mathbb{R}^N\times E^*\rightarrow (E^{*})^{\perp}$ such that $W^*(x) = x+ \text{graph}(g^*(x,\cdot))$. Therefore we have that the parametrization $\sigma^{*}:\mathbb{R}^N\times E^{*} \rightarrow \mathbb{R}^N$ of the invariant foliation $\mathcal{F}^*$, given by $\sigma^{*}(x,v) = x + v + g^{*}(x,v)$. We also denote by $\sigma_x^*:E^*\rightarrow\mathbb{R}^N$ the parametrization of $W^*(x)$, defined by $\sigma_x^*(v)=\sigma^*(x,v)$. \newline

\begin{lemma}\label{invariant manifolds are Lip graphs}
There exist $\kappa=\kappa(f)$, with $\kappa(f)\rightarrow0$ as $f\rightarrow A$ in $\mathcal{C}^1$,  such that, for $*=s,c,u,cs,cu$,
\begin{equation*}
    |g^{*}(x,v)| \leq \kappa|v|, \quad\text{for every }v\in E^{*}.
\end{equation*}
Moreover, the parametrizations $\sigma_x^*$ are $(1+\kappa)$-biLipschitz continuous.
\end{lemma}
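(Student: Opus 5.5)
The plan is to follow the graph-transform / invariant section argument of Hirsch--Pugh--Shub, adapted to the lift $F=A+G$ on $\mathbb{R}^N$. Since $G$ is $\mathbb{Z}^N$-periodic and $f$ is $\mathcal{C}^1$-close to $A$, the quantity $\varepsilon:=\|DG\|_{\mathcal{C}^0}$ is small, so $F$ is a globally small Lipschitz perturbation of the hyperbolic-type linear map $A$. This is the key point making the statement global rather than local: the estimates below hold uniformly on all of $\mathbb{R}^N$, not just near $0$.

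\emph{Step 1 (cone invariance).} For $*=s,c,u,cs,cu$ and $\kappa>0$ consider the cone fields
\begin{equation*}
\mathcal{C}^*_\kappa=\{v\in\mathbb{R}^N:\ |v^{\perp}|\le\kappa|v^*|\},
\end{equation*}
where $v^\perp$ is the component in $(E^*)^\perp$. With the adapted norms fixed above ($A|_{E^s}$ and $A^{-1}|_{E^u}$ contractions, $A|_{E^c}$ an isometry), the unstable-type cones $\mathcal{C}^u_\kappa,\mathcal{C}^{cu}_\kappa$ are strictly mapped into themselves by $A$, and the stable-type ones by $A^{-1}$. A perturbation argument then shows that if $\varepsilon\ll\kappa$, the same holds for $DF$ (resp.\ $DF^{-1}$) at every point. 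Moreover one can take $\kappa=\kappa(\varepsilon)\to0$ as $\varepsilon\to0$, at least for the strong directions $s,u$. For the pure center direction $c$, there is no domination on either side; there I would use $E^c=E^{cs}\cap E^{cu}$ and $W^c=W^{cs}\cap W^{cu}$. Dynamical coherence for $f$ near $A$ is assumed from the preceding discussion.

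\emph{Step 2 (tangent spaces lie in the cones).} The invariant bundles $E^*_F(x)$ lie in the corresponding cones $\mathcal{C}^*_\kappa$, being obtained as limits $\bigcap_n DF^{n}\mathcal{C}^*_\kappa$ (or with $DF^{-n}$). Consequently each leaf $W^*(x)$ is everywhere tangent to $\mathcal{C}^*_\kappa$.

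\emph{Step 3 (leaves are global graphs).} A connected immersed submanifold everywhere tangent to $\mathcal{C}^*_\kappa$ has injective projection $P^*$ onto $E^*$ along $(E^*)^\perp$, with $(1+\kappa)$-Lipschitz inverse on its image. To see that the image is all of $E^*$, so that $W^*(x)$ is a graph over the whole subspace, I would argue as follows. The leaves are complete in the induced metric (for $s,u$ they are increasing unions of iterates of local disks; for $cs,cu,c$ completeness comes from coherence on the compact quotient $\mathbb{T}^N$ lifted). A complete immersed submanifold whose projection is a local diffeomorphism with uniformly Lipschitz local inverse is a covering of $E^*$, hence a graph since $E^*$ is simply connected. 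This gives $g^*(x,\cdot)$ with $g^*(x,0)=0$ and $\mathrm{Lip}(g^*(x,\cdot))\le\kappa$, hence $|g^*(x,v)|\le\kappa|v|$.

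\emph{Step 4 (bi-Lipschitz bound).} From the graph form, $|\sigma^*_x(v)-\sigma^*_x(w)|$ lies between $|v-w|$ and $(1+\kappa)|v-w|$, using $|v+g|=|v|+|g|$ for the sum norm with complementary components. This yields the $(1+\kappa)$-bi-Lipschitz property.

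The main obstacle I expect is the center direction and the completeness/global-graph step for $c$, $cs$, $cu$. For these, the cone argument gives no contraction in the center, so I would rely on coherence, intersecting the $cs$ and $cu$ graphs: the intersection of two transverse Lipschitz graphs with small constants is a graph over $E^c$ by a contraction-mapping argument. For the full details I would defer to \cite{Hertz2005} and \cite{hirsch1977invariant}.
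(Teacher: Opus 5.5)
Your argument is correct, but it takes a different route from the paper. The paper gives no proof: it imports the lemma from \cite{Hertz2005}, where it rests on the global invariant manifold theory of Hirsch--Pugh--Shub \cite{hirsch1977invariant} applied to $F=A+G$ with $\|DG\|_{\mathcal{C}^0}$ small. In that theory the leaves are constructed directly as globally defined Lipschitz graphs over $E^*$, by a graph-transform/fixed-point argument, with $W^c=W^{cs}\cap W^{cu}$. So the graph property and the bound $\mathrm{Lip}(g^*(x,\cdot))\le\kappa$ come out together with the leaves themselves.

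You instead take the foliations (dynamical coherence) as given and recover the graph structure afterwards. Cone invariance places $E^*_F$ inside $\mathcal{C}^*_\kappa$. Completeness of the leaves, together with $|DP^*w|\ge|w|/(1+\kappa)$ on tangent vectors, then makes $P^*|_{W^*(x)}$ a covering of the simply connected $E^*$, hence a homeomorphism. This is more elementary and shows clearly why the estimate is global: $DG$ is uniformly small by $\mathbb{Z}^N$-periodicity, and the leaves are complete. The cost is that you use coherence rather than produce it. Since the paper already takes dynamical coherence of $F$ for granted before this lemma, that is legitimate.

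Two small corrections. First, the opening claim of your Step 3 is false as stated when $\dim E^*\ge 2$. The helicoidal strip $(r\cos\theta,r\sin\theta,\kappa\theta)$, $1<r<2$, $0<\theta<4\pi$, is embedded and tangent to the cone $\{|v_3|\le\kappa|(v_1,v_2)|\}$, yet it projects two-to-one onto the horizontal plane. Injectivity really comes from the completeness/covering argument you give afterwards, not from the cone condition alone.

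Second, your hedge about the center directions is unnecessary. The same perturbative cone estimates work for $cs$ and $cu$ with $\kappa(\varepsilon)\to 0$, because $A|_{E^c}$ is an isometry and the relevant gaps are $1-\|A|_{E^s}\|$ and $1-\|A^{-1}|_{E^u}\|$. Moreover, every $w\in\mathcal{C}^{cs}_\kappa\cap\mathcal{C}^{cu}_\kappa$ satisfies $|w^{su}|\le\frac{2\kappa}{1-\kappa}|w^c|$. Hence $E^c_F$ lies in a thin center cone, and the covering argument applies verbatim to the complete center leaves, with no separate contraction-mapping step.
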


\begin{lemma}\label{unique intersection}
    For any $x,y\in\mathbb{R}^N$
    \begin{equation*}
        \#W^s(x)\cap W^{cu}(y)=1, \quad\text{and}\quad \#W^u(x)\cap W^{cs}(y)=1.
    \end{equation*}
\end{lemma}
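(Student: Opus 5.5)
We prove that $W^s(x)\cap W^{cu}(y)$ consists of exactly one point. The statement for $W^u(x)\cap W^{cs}(y)$ is identical with the roles of $s$ and $u$ exchanged, using $E^{cs}=(E^u)^\perp$. The approach is to use the graph structure from Lemma \ref{invariant manifolds are Lip graphs} to reduce the intersection problem to a fixed point problem for a contraction on $E^s$.

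First we write both leaves as graphs. We have $W^s(x)=\{x+v+g^s(x,v): v\in E^s\}$ with $g^s(x,v)\in E^{cu}$, and $W^{cu}(y)=\{y+w+g^{cu}(y,w): w\in E^{cu}\}$ with $g^{cu}(y,w)\in E^s$. We need the graph maps to be Lipschitz in the fiber variable with small constant, and not just bounded by $\kappa|v|$. This follows from the fact that $\sigma^*_x$ is $(1+\kappa)$-biLipschitz, together with the standard construction of invariant manifolds via graph transform on Lipschitz graphs. Concretely, the leaves are graphs of maps whose Lipschitz constant is at most $\kappa$, since the graph transform preserves $\kappa$-Lipschitz graphs over $E^*$ when $F$ is $\mathcal{C}^1$-close to $A$. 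I will take this as the content of Lemma \ref{invariant manifolds are Lip graphs}: the leaf through any of its points is again a graph of the same form, so the bound $|g^*(z,v)|\le\kappa|v|$ applied at every base point $z$ of the leaf gives the Lipschitz bound.

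Next we set up the fixed point equation. A point $p$ lies in both leaves if and only if
\[
p = x+v+g^s(x,v) = y+w+g^{cu}(y,w).
\]
Project this identity onto $E^s$ and $E^{cu}$ using the direct sum $E^s\oplus E^{cu}$. Writing $z=x-y$, the $E^{cu}$-component gives $w=z^{cu}+g^s(x,v)$. The $E^s$-component gives $v=-z^{s}+g^{cu}(y,w)$. Substituting the first into the second yields
\[
v=\Phi(v):=-z^s+g^{cu}\bigl(y,\,z^{cu}+g^s(x,v)\bigr).
\]
Conversely, every fixed point $v$ of $\Phi$ determines $w$ and hence a unique intersection point. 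So the intersection points are in bijection with the fixed points of $\Phi\colon E^s\to E^s$.

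Finally we show $\Phi$ is a contraction. Both $g^s(x,\cdot)$ and $g^{cu}(y,\cdot)$ are $\kappa$-Lipschitz with respect to the adapted norm $|v|=|v^u|+|v^c|+|v^s|$, so $\Phi$ is $\kappa^2$-Lipschitz. For $f$ close enough to $A$ we have $\kappa<1$, and the Banach fixed point theorem on the complete space $E^s$ gives exactly one fixed point. Hence $\#W^s(x)\cap W^{cu}(y)=1$.

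The main obstacle is the global Lipschitz (not merely linear-growth) control of $g^s$ and $g^{cu}$, in particular for the center-unstable foliation. There one must use the global graph-transform construction on $\mathbb{R}^N$: $F=A+G$ with $G$ periodic and $\mathcal{C}^1$-small, so $DG$ is uniformly small everywhere. This is what makes the leaves global $\kappa$-Lipschitz graphs rather than merely local ones.
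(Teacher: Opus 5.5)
Your argument is correct. The paper does not prove this lemma itself: it is listed among the facts ``directly taken from'' \cite{Hertz2005}. There it follows from the leaves being global graphs with small Lipschitz constant over $E^s$ and $E^{cu}$ (resp.\ $E^u$ and $E^{cs}$). Your reduction to a $\kappa^2$-Lipschitz self-map of $E^s$, whose fixed points are in bijection with the intersection points, makes exactly that argument explicit.

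One simplification: the step you flag as the main obstacle needs no graph-transform input. With the paper's additive norm $|v|=|v^u|+|v^c|+|v^s|$, and $g^*(x,\cdot)$ taking values in the complementary summand, you have $|\sigma^*_x(v)-\sigma^*_x(v')|=|v-v'|+|g^*(x,v)-g^*(x,v')|$ for $v,v'\in E^*$. So the $(1+\kappa)$-Lipschitz bound on $\sigma^*_x$ in Lemma \ref{invariant manifolds are Lip graphs} is literally the statement that $g^*(x,\cdot)$ is $\kappa$-Lipschitz. Your re-basing argument reaches the same conclusion from the growth bound alone: on a single leaf, $g^*(x,v_2)-g^*(x,v_1)=g^*(p_1,v_2-v_1)$ with $p_1=\sigma^*_x(v_1)$.
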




\noi For $x\in\mathbb{R}^N$ and $y\in W^{cu}(x)$ we have the holonomy $h^u_{xy}:W^c(x)\rightarrow W^c(y)$, defined by 
$$ h^u_{xy}(z) = W^u(z)\cap W^{cs}(y). $$
For $y\in W^{cs}(x)$ one defines $h^s_{xy}:W^c(x)\rightarrow W^c(y)$ by $h^s_{xy}(z)=W^s(z)\cap W^{cu}(y)$.\newline
If $\gamma$ is a $su$-path connecting $x$ and $y$ we can define $h^{\gamma}:W^c(x)\rightarrow W^c(y)$, by concatenating the the stable and unstable holonomies along the corresponding legs of the path.\newline
Given $x,y\in\mathbb{R}^N$ we have a way to define a preferential holonomy between $W^c(x)$ and $W^c(y)$. Exploiting the uniqueness of the intersection between $W^{s}$ and $W^{cu}$, we define $h^{su}_{x,y}=h^{s}_{x,z}\circ h^{u}_{z,y}$, where $z=W^s(x)\cap W^{cu}(y)$.\newline

Exploiting Lemma \ref{unique intersection} we can define
$\pi^s:\mathbb{R}^N\rightarrow W^{cu}(0)$, by $\pi^s(x)=W^s(x)\cap W^{cu}(0)$  and $\pi^u:\mathbb{R}^N\rightarrow W^{cs}(0)$, by $\pi^u(x)=W^u(x)\cap W^{cs}(0)$. Finally we set
\begin{equation*}
    \pi^{su}:\mathbb{R}^N\rightarrow W^c(0) \quad \text{ by } \quad \pi^{su}=\pi^s\circ\pi^u.
\end{equation*}
Notice that $\pi^{su}$ preserves the accessibility classes, i.e. $\pi^{su}(z)\in AC(z), \forall z\in\mathbb{R}^N$.\newline

Then we set $\hat{T}_n :W^c(0)\rightarrow W^c(0)$ by $\hat{T}_n=h^{su}_{n,0}\circ(id+n)$. Notice that, when projected to $\mathbb{T}^N$, these are holonomies along $su$-loops. To make some properties of this family of maps more evident it will be at times useful to read them in charts, hence defining $T_n:E^c\rightarrow E^c$ by $T_n=(\sigma_0^c)^{-1}\circ \tilde T _n \circ \sigma_0^c$.\newline
Notice that, a priori, it might hold that $T_n\circ T_m\neq T_{n+m}$.

\begin{lemma}\label{Holonomies Lip}
    There exists $\beta=\beta(f)$ with $\beta(f)\rightarrow 0$ as $f\rightarrow A$ in $\mathcal{C}^1$ and $C>0$, that depends only on the size of the $\mathcal{C}^1$ neighbourhood of $A$ such that the following holds.\newline
    For any $su$-path $\gamma$ with at most $K$ legs and length less than $L\geq1$,
    \begin{equation*}
        \text{Lip}(h^{\gamma})\leq C^KL^{K\beta}.
    \end{equation*}
    Moreover, for every $n\in\mathbb{Z}^N$ with $n\neq 0$,
    \begin{equation*}
        \text{Lip}(\hat T_n)\leq C|n|^{\beta}.
    \end{equation*}
\end{lemma}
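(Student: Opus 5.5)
We need to prove Lemma \ref{Holonomies Lip}: the Lipschitz bound on holonomies along $su$-paths, and on $\hat T_n$.

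We start with a single leg. Take an unstable leg, i.e.\ $y\in W^u(x)$ with $d(x,y)\le L$, and the holonomy $h^u_{xy}:W^c(x)\to W^c(y)$. Pick $z,z'\in W^c(x)$. We compare them by iterating backwards. Set $m\approx \log L/\log\mu_u$, so that $F^{-m}$ brings the unstable leg to length $O(1)$. On a bounded scale, holonomies are uniformly Lipschitz with constant $C$. This comes from the graph structure in Lemma \ref{invariant manifolds are Lip graphs} and the uniqueness in Lemma \ref{unique intersection}: in charts, $h^u$ at bounded distance is the intersection of a $(1+\kappa)$-biLipschitz graph family with another graph family, and an elementary transversality estimate gives a Lipschitz constant close to $1$, uniformly.

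We then use the conjugation
\begin{equation*}
    h^u_{xy}=F^m\circ h^u_{F^{-m}x,F^{-m}y}\circ F^{-m}.
\end{equation*}
The maps $F^{\pm m}$ restricted to center leaves have Lipschitz constant at most $(1+\beta_0)^m$, where $\beta_0\to0$ as $f\to A$ in $\mathcal{C}^1$, because $A|_{E^c}$ is an isometry and the center bundle of $f$ is $\kappa$-close to $E^c$. Hence
\begin{equation*}
    \mathrm{Lip}(h^u_{xy})\le C(1+\beta_0)^{2m}\le C\,L^{\beta},\qquad \beta=\frac{2\log(1+\beta_0)}{\log \mu_u}\to 0.
\end{equation*}
The same argument applies to stable legs, using forward iterates.

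For a path with $K$ legs, each leg has length at most $L$, so composing the $K$ single-leg bounds gives $\mathrm{Lip}(h^\gamma)\le C^K L^{K\beta}$.

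For $\hat T_n$, the translation $\mathrm{id}+n$ is an isometry, and by $\mathbb{Z}^N$-periodicity of $G$ it maps center leaves to center leaves. It remains to bound $h^{su}_{n,0}$, which is a two-leg holonomy. Its point $z=W^s(n)\cap W^{cu}(0)$ lies at distance $O(|n|)$ from both endpoints, by the Lipschitz graph estimates of Lemma \ref{invariant manifolds are Lip graphs}, with $\kappa$ small and $|n|\ge1$. The two-leg bound therefore gives $C^2(C'|n|)^{2\beta}$, and after renaming the constants and $\beta$ this is $C|n|^\beta$.

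The main obstacle is the uniform bound at bounded scale together with the rate $(1+\beta_0)^m$. The single-leg estimate has to hold for points $z,z'$ on the center leaf that are far apart, since center leaves are unbounded. I would handle this by noting that $h^u_{xy}$ moves every point by at most $(1+\kappa)L$. Along a center leaf, the holonomy then differs from the translation-like map by something whose derivative is controlled by the angle variation of the foliations, which is $O(\kappa)$. That suffices at scale $L=O(1)$ independently of $|z-z'|$. The iterate bound itself only needs $\mathcal{C}^1$-closeness of $Df|_{E^c_f}$ to an isometry.
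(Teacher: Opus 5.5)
Your reduction of long legs to short ones, $h^u_{xy}=F^m\circ h^u_{F^{-m}x,F^{-m}y}\circ F^{-m}$ with $m\approx\log L/\log\mu_u$ and $\mathrm{Lip}(F^{\pm m}|_{W^c})\le(1+\beta_0)^m$, is the natural mechanism. (The paper gives no proof of its own; it imports the lemma from \cite{Hertz2005}.) Composing over $K$ legs, and reducing $\hat T_n$ to a two-leg holonomy of length $O(|n|)$, are also fine.

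\textbf{The gap is the base case}: a uniform Lipschitz bound for holonomies along legs of length $O(1)$. It does not follow from Lemmas \ref{invariant manifolds are Lip graphs} and \ref{unique intersection}. Those lemmas control each leaf separately, as a $\kappa$-Lipschitz graph. The holonomy instead measures how leaves depend on the base point, i.e.\ the transverse regularity of the foliation, which is a priori only H\"older. For example, the segments $\{(t,s+\kappa c(s)t):0\le t\le 1\}$, with $c$ the Cantor function, foliate a strip by straight leaves of slope at most $\kappa$. Yet the holonomy $s\mapsto s+\kappa c(s)$ from $\{t=0\}$ to $\{t=1\}$ is not Lipschitz.

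Your fallback has the same defect. The $O(\kappa)$ bound concerns the angle between $E^*_f$ and $E^*$, a $C^0$ quantity, not the derivative of the holonomy. Nor can the iteration produce the base case from $C^0$ information alone. Using only $|h(w)-w|\le C\mu_u^{-m}$ at the small scale, you get $|h(z)-h(z')|\le(1+\beta_0)^{2m}|z-z'|+C(1+\beta_0)^m\mu_u^{-m}$. Optimizing in $m$ gives H\"older continuity with exponent $1-O(\beta_0)$, not a Lipschitz bound.

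The missing ingredient is dynamical and uses regularity of $f$ beyond $\mathcal{C}^1$-closeness to $A$. Since $A|_{E^c}$ is an isometry, $f$ is center bunched. For $f$ of class $\mathcal{C}^2$ (more generally $\mathcal{C}^{1+\alpha}$), the stable and unstable holonomies between center leaves inside $cs$- and $cu$-leaves are then $\mathcal{C}^1$ (Pugh--Shub--Wilkinson), with $Dh^u_{xy}(z)=\lim_{m} DF^m|_{E^c_f}\circ I_m\circ DF^{-m}|_{E^c_f}$. Here $I_m$ identifies the center spaces at the exponentially close points $F^{-m}z$ and $F^{-m}h^u_{xy}(z)$. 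Convergence of this product, and a uniform bound on it for short legs, come from playing the H\"older continuity of $E^c_f$ against the almost isometric center rates. The same theory underlies the $\mathcal{C}^r$ statement of Lemma \ref{Tn close to linear}. Once you have it, your iteration finishes the proof.

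A secondary point: your claim that $h^u_{xy}$ moves every point by at most $(1+\kappa)L$ does not follow from Lemma \ref{invariant manifolds are Lip graphs}. That lemma allows a drift of order $\kappa|z-x|$ far out along $W^c(x)$. What you need is that lifted $cs$- and $cu$-leaves stay within a uniform distance $D=D(f)$ of $x+E^{cs}$ and $x+E^{cu}$. This follows by writing $F^{\pm1}=A^{\pm1}+(\text{periodic, bounded})$: a transverse component larger than $D$ would grow like $\mu_u^{k}$ (resp.\ $\lambda_s^{-k}$), while distances inside the leaf grow at most like $(1+\beta_0)^k$. This gives displacement at most $(1+\kappa)(L+2D)$, which is harmless since $L\ge 1$.
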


\begin{lemma}\label{Tn log}
    There exists $C >0$ that only depends on the $\mathcal{C}^1$ size of the neighborhood of $A$ such that, for any $x\in E^c$ and $n\in\mathbb{Z}^N$,
    \begin{equation*}
        |T_n(x)-(x+n^c)| \leq C\log^+(|n|)+C.
    \end{equation*}
\end{lemma}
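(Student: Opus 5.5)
We need to prove Lemma \ref{Tn log}: $|T_n(x)-(x+n^c)|\le C\log^+|n|+C$, with $C$ depending only on the $\mathcal{C}^1$ size of the neighbourhood of $A$ and not on $x$ or $n$.

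The plan is to write $T_n$ as a composition of holonomies and compare each piece with the corresponding map for $A$. For $A$ the holonomies are translations along $E^s$ and $E^u$, so the linear analogue of $T_n$ is exactly $x\mapsto x+n^c$. We unfold
\[
\hat T_n(z)=h^s_{n,w}\circ h^u_{w,0}(z+n),\qquad w=W^s(n)\cap W^{cu}(0).
\]
The point $z+n$ moves along an unstable leaf to a point $p$ of $W^{cs}(w)$, then along a stable leaf to a point $q$ of $W^{cu}(0)\cap W^{cs}(\cdot)$, and $q$ lies in $W^c(0)$. In the chart $\sigma^c_0$ the $E^c$-coordinate of $q$ is $T_n(x)$, while that of $z+n$ is $x+n^c$ up to the graph correction. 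So it is enough to bound the $E^c$-component of the displacement along each leg, and to control the graph corrections by Lemma \ref{invariant manifolds are Lip graphs}.

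The naive bound on one leg is $|g^u(\cdot,v)|\le\kappa|v|$, where $|v|$ is the length of the leg in $E^u$ coordinates. Since $|v|\lesssim |n|+|x|$, this would give an error linear in $|n|$ and even growing with $|x|$, which is far too weak. The logarithm must come from iterating the dynamics, and this is the main obstacle.

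To overcome it, I would use invariance and the fact that $F=A+G$ with $G$ bounded, $|G|\le C_0$. First I would establish the key sublemma: there is a constant $D$, depending only on the neighbourhood, such that for every $x$ and every $y\in W^u_F(x)$ one has
\[
|(y-x)^c|\le D\bigl(\log^+|y-x|+1\bigr),
\]
and symmetrically for stable leaves. To prove it, I choose $m\approx\log|y-x|/\log\mu_u$ so that $F^{-m}$ shrinks the unstable segment to length $O(1)$; by Lemma \ref{invariant manifolds are Lip graphs} its center displacement is then $O(\kappa)=O(1)$. Pushing forward by $F^m$, each application of $F$ changes the center coordinate of the difference by $A$ acting isometrically on $E^c$ plus the term $(G(a)-G(b))^c$, which is at most $2C_0$. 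Hence the center component grows at most by $2C_0$ per step, giving $O(1)+2C_0 m=O(\log|y-x|)$. Note that $x$ plays no role here, which makes the bound uniform in $x$.

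Next I bound the lengths of the legs by $O(|n|)$, uniformly in $x$. For this I use the bi-Lipschitz graph property and compare with the linear picture: $w$ is at distance $O(|n|)$ from $n$, and the holonomy displacements are bounded by the $su$-components of the distance between the relevant leaves. Those components are $O(|n|)$ plus a $\kappa$-small multiple, and the $\kappa$-small multiple can be absorbed since $\kappa<1/2$. The remaining delicate point is uniformity in $x$ of the leg lengths. I would handle it by observing that the deviation of the leaves from affine subspaces is itself controlled along center directions by the same iteration argument, or alternatively by invoking $\mathbb{Z}^N$-periodicity of the foliations so that only relative positions matter. Adding the two legs and the $\kappa$-corrections then gives the claimed estimate $C\log^+|n|+C$.
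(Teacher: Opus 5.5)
The paper does not prove this lemma itself but quotes it from \cite{Hertz2005}, and your argument is the standard one behind it and is correct. The logarithm comes exactly where you say: shrink each leg to unit size with $m\approx\log|n|$ iterates, then use that $A|_{E^c}$ is an isometry and $G$ is bounded, so the center component of the displacement grows by at most $2\sup|G|$ per iterate.

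The point you leave as delicate closes in one line, without appeal to periodicity. Unwinding the definition, the legs go stable first, from $\hat z+n$ to $p=W^s(\hat z+n)\cap W^{cu}(0)$, and then unstable, to $q=W^u(p)\cap W^{cs}(0)$; in your order the endpoint lands on $W^c(w)$ rather than $W^c(0)$. Since $\hat z=\sigma^c_0(x)\in W^c(0)$, these target leaves are $W^{cu}(\hat z)$ and $W^{cs}(\hat z)$. So Lemma \ref{invariant manifolds are Lip graphs} applied with base point $\hat z$, where $(\hat z+n)-\hat z=n$, bounds both leg lengths by $C|n|$ with no dependence on $x$. No graph correction is needed either, since $T_n(x)=q^c$ and $x+n^c=(\hat z+n)^c$ exactly.
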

Here $\log^+(x)=\max\{0,\log(x)\}$.

\begin{lemma}\label{Tn close to linear}
If $f$ is $\mathcal{C}^r$, then $T_n$ is $\mathcal{C}^r$ for all $n\in\mathbb{Z}^N$, moreover, writing
$T_n(z) = z+ n^c + \phi_n(z)$
then for any $\varepsilon>0$ and $R>0$ there is a neighborhood of $A$ in the $\mathcal{C}^r$ topology such that if $f$
is in this neighborhood, then $\|\phi _n\|_{\mathcal{C}^r} <\varepsilon$ whenever $|n| \leq R$.
\end{lemma}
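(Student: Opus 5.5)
The lemma states: if $f$ is $\mathcal{C}^r$, then every $T_n$ is $\mathcal{C}^r$, and $\phi_n = T_n - \mathrm{id} - n^c$ is $\mathcal{C}^r$-small uniformly for $|n|\le R$ once $f$ is $\mathcal{C}^r$-close to $A$. The plan is to view $T_n$ as a finite composition of maps, each depending continuously in $\mathcal{C}^r$ on $f$, and to check that at $f=A$ the composition is exactly $z\mapsto z+n^c$.

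\textbf{Unwinding the definition.} By definition,
$$T_n=(\sigma^c_0)^{-1}\circ h^s_{n,w}\circ h^u_{w,0}\circ(\mathrm{id}+n)\circ\sigma^c_0, \qquad w=W^s(n)\cap W^{cu}(0),$$
up to the order convention for $h^{su}$. The ingredients are:
\begin{itemize}
\item the center leaves $W^c(0)$, $W^c(n)=W^c(0)+n$ and $W^c(w)$, with their graph parametrizations $\sigma^c$;
\item one stable holonomy and one unstable holonomy between center leaves inside a common $cs$ or $cu$ leaf.
\end{itemize}
For $A$ all leaves are affine, and the holonomies are translations along $E^s$ and $E^u$. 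The composition therefore equals $z\mapsto z+n^c$, since the $E^c$-component of a point is unchanged by stable and unstable translations.

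\textbf{Regularity of the pieces.} I would invoke the standard fact that, for a dynamically coherent partially hyperbolic $\mathcal{C}^r$ diffeomorphism close to $A$, the following hold.
\begin{itemize}
\item The center leaves are $\mathcal{C}^r$ immersed manifolds, depending continuously in $\mathcal{C}^r$ on $f$ on compact pieces. This is normal hyperbolicity (Hirsch--Pugh--Shub), using that $A|_{E^c}$ is an isometry, so $r$-normal hyperbolicity holds for every $r$ with a nearby $f$.
\item The stable and unstable holonomies between center leaves inside a $cs$ (respectively $cu$) leaf are $\mathcal{C}^r$. Restricted to a $cs$-leaf, the strong stable foliation subfoliates it, and the center leaves are transversals. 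Holonomies of the strong stable foliation within a center-stable leaf are $\mathcal{C}^r$ under the center bunching/domination $\lambda_s<\mu_c^{\,r}$, which holds for all $r$ near $A$ because the center is almost isometric. This is the Pugh--Shub--Wilkinson result on $\mathcal{C}^r$ holonomies, applied leafwise.
\item These holonomies depend continuously in $\mathcal{C}^r$ on $f$, on sets of bounded size.
\end{itemize}
Since $|n|\le R$, the relevant points $n$, $w$ and the lengths of the legs are uniformly bounded by Lemma~\ref{invariant manifolds are Lip graphs} and Lemma~\ref{unique intersection}. The intersection point $w$ depends continuously on $f$ by transversality.

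\textbf{Composition and the uniform bound.} A composition of $\mathcal{C}^r$ maps is $\mathcal{C}^r$, which gives the first claim. For the smallness of $\phi_n$, the $\mathcal{C}^r$-continuity of each factor in $f$ gives $\|\phi_n\|_{\mathcal{C}^r}$ small on compact sets, and only finitely many $n$ satisfy $|n|\le R$.

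The norm $\|\phi_n\|_{\mathcal{C}^r}$ is over all of $E^c$, which is noncompact. I would handle this by periodicity. Deck translations by $m\in\mathbb{Z}^N$ together with the $su$-projection give
$$T_n\circ T_m \approx T_m\circ T_n$$
only up to control; more cleanly, the local structure near $\sigma^c_0(z)$ is governed by $f$ on the torus, which is compact. So the graph functions $g^*$ and the holonomies have $\mathcal{C}^r$ bounds uniform in the base point, by $\mathbb{Z}^N$-periodicity of $F-A$. Local uniform estimates at every point then give the global $\mathcal{C}^r$ bound.

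\textbf{Main obstacle.} I expect the hardest step to be the uniform $\mathcal{C}^r$ regularity of the holonomies, with $r$ as large as $22$ and continuous dependence on $f$. It requires checking the bunching inequality to order $r$ and controlling the dependence uniformly over noncompact leaves. I would first reduce to bounded-length legs, then apply the leafwise $\mathcal{C}^r$ section theorem (fiber contraction) uniformly, using compactness of $\mathbb{T}^N$.
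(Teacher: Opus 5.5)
Your outline is correct and is the standard argument behind this lemma, which the paper does not prove but imports from \cite{Hertz2005}. You factor $T_n$ through the center chart and one stable and one unstable holonomy, and get $\mathcal{C}^r$ regularity and $\mathcal{C}^r$-continuity in $f$ of these holonomies inside $cs$/$cu$ leaves from the Pugh--Shub--Wilkinson $r$-bunching theorem (available near $A$ since $A|_{E^c}$ is an isometry). You then check that for $f=A$ the composition is exactly $z\mapsto z+n^c$, and use that only finitely many $n$ satisfy $|n|\le R$.

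One point needs repair. Lemmas \ref{invariant manifolds are Lip graphs} and \ref{unique intersection} only control the legs issuing from the base point $n$, since the bound $|g^*(x,v)|\le\kappa|v|$ grows linearly along the leaf. Your periodicity argument for the global $\mathcal{C}^r$ norm needs a bound on leg lengths that is uniform over all of $W^c(n)$. This includes the $\mathcal{C}^0$ part, because $\phi_n(x)$ is the $E^c$-component of the two leg vectors and hence at most $\kappa$ times their total length. Take that uniform bound from Lemma \ref{Tn log} together with the fact that stable and unstable legs lie in thin cones around $E^s$ and $E^u$, or from the uniformly bounded distance of $cs$/$cu$ leaves to affine planes.
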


\subsection{Saturation}\label{Su-saturation}

Here we prove futher results about the invariant foliations, in particular how they jointly saturate the environment space.\newline

\noi We set $E^*(x)=x+E^*$. Moreover, for $S\subset\mathbb{R}^n$, we define $E^*(S)=\bigcup_{x\in S} E^*(x)$.\newline
We also define define $W_r^{*}(x)=\sigma_x^*(B_{E^*}(0,r))$ and, similarly, we set $W^*(S)=\bigcup_{x\in S} W^*(x)$ and $W_r^*(S)=\bigcup_{x\in S} W_r^*(x)$.\newline

We define $\Phi_x(V):\mathbb{R}^N\rightarrow \mathbb{R}^N$ by $\Phi_x(v) = \sigma^u(v^u,\sigma^s(v^s,\sigma^c(v^c,x)))$.

\begin{lemma}\label{Phi close to identity}
    There exist $\kappa=\kappa(f)$ with $\kappa\rightarrow0$ when $f\rightarrow A$ in $\mathcal{C}^1$ such that
    $$|\Phi_x(v)-(x+v)|\leq\kappa|v|.$$
    Moreover, whenever $f$ is sufficiently $\mathcal{C}^1$-close to $A$, $\Phi_x$ is proper.
\end{lemma}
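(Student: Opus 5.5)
We need to prove Lemma \ref{Phi close to identity}. The map is $\Phi_x(v)=\sigma^u(v^u,\sigma^s(v^s,\sigma^c(v^c,x)))$. Here $\sigma^*(y,w)$ denotes $\sigma^*_y(w)$ with the order of the arguments swapped, so that $\Phi_x$ moves first along $W^c(x)$ by $v^c$, then along a stable leaf by $v^s$, then along an unstable leaf by $v^u$.

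\emph{Estimate.} Set
\begin{equation*}
    y_1=\sigma^c_x(v^c), \qquad y_2=\sigma^s_{y_1}(v^s), \qquad y_3=\sigma^u_{y_2}(v^u)=\Phi_x(v).
\end{equation*}
By Lemma \ref{invariant manifolds are Lip graphs} we have $y_1=x+v^c+g^c(x,v^c)$ with $|g^c(x,v^c)|\le\kappa|v^c|$, and similarly $y_2=y_1+v^s+g^s(y_1,v^s)$ and $y_3=y_2+v^u+g^u(y_2,v^u)$. Summing the three identities gives
\begin{equation*}
    |\Phi_x(v)-(x+v)|\le \kappa(|v^c|+|v^s|+|v^u|)=\kappa|v|,
\end{equation*}
because the chosen norm satisfies $|v|=|v^u|+|v^c|+|v^s|$. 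This is a direct telescoping argument, and the constant $\kappa$ is the same one as in Lemma \ref{invariant manifolds are Lip graphs}, so it tends to $0$ as $f\to A$ in $\mathcal{C}^1$.

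\emph{Properness.} The map $\Phi_x$ is continuous, since the foliations are continuous and the graph maps $g^*$ depend continuously on the base point. The estimate gives $|\Phi_x(v)-x|\ge(1-\kappa)|v|$. Once $f$ is close enough to $A$ that $\kappa<1$, preimages of bounded sets are therefore bounded. Preimages of compact sets are also closed by continuity, hence compact, and so $\Phi_x$ is proper.

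\emph{Main obstacle.} The only delicate point is continuity of $\sigma^*$ jointly in the base point and the leaf parameter, which is needed for closedness of preimages. This follows from the standard continuous dependence of the invariant foliations, i.e. the continuity of $g^*$ on $\mathbb{R}^N\times E^*$, which I would take as implicit in the setup of Section \ref{Invariant Manifolds and Holonomies}. If a stronger statement is wanted, one can also observe that a continuous proper map with $|\Phi_x(v)-(x+v)|\le\kappa|v|$ and $\kappa<1$ has degree one, hence is surjective. This is not required by the statement.
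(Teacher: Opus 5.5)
Your proof is correct and follows the paper's argument essentially verbatim. You use the same telescoping through $\sigma^c$, $\sigma^s$, $\sigma^u$ via Lemma \ref{invariant manifolds are Lip graphs} and the additive norm, and you get properness from $|\Phi_x(v)-x|\geq(1-\kappa)|v|$ once $\kappa<1$. Two of your details improve slightly on the paper: your explicit appeal to joint continuity of the foliation charts for closedness of preimages is left implicit there, and your lower bound is the correct form of the inequality the paper misstates as $|\Phi_x(v)|\geq(1-\kappa)|v-x|$.
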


\begin{proof}
    We have, thanks to lemma \ref{invariant manifolds are Lip graphs}, that
    \begin{align*}
        |\Phi_x(v)-(x+v)|=&|\sigma^u(v^u,\sigma^s(v^s,\sigma^c(v^c,x)))-(x+v)|\\
        \leq&|\sigma^u(v^u,\sigma^s(v^s,\sigma^c(v^c,x))) - (\sigma^s(v^s,\sigma^c(v^c,x)) + v^u)|\\
        &\quad+|\sigma^s(v^s,\sigma^c(v^c,x))-(\sigma^c(v^c,x)+v^s)| +|\sigma^c(v^c,x)-(x+v^c)|\\
        \leq& \kappa|v^u| + \kappa|v^s| + \kappa|v^c|\\
        =& \kappa|v|,
    \end{align*}
    where $\kappa$ is, of course, the same as in Lemma lemma \ref{invariant manifolds are Lip graphs}.\newline
    Finally, we have that $|\Phi_x(v)|\geq (1-\kappa)|v-x|$, hence $\Phi_x$ is proper, whenever $\kappa<1$.
\end{proof}

\begin{lemma}\label{proper 1}
    If $f$ is sufficiently close to $A$ in the $\mathcal{C}^1$ topology we have that $$W_r^u(W_r^s(W_r^c(x)))\supset B(x,r/2).$$ 
    Moreover $\Phi_x$ is surjective.
\end{lemma}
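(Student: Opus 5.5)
The plan is to derive both claims from the near-identity estimate of Lemma \ref{Phi close to identity}, using a degree (Brouwer fixed point) argument. Note first that $\Phi_x(v)$ lies in $W^u_r(W^s_r(W^c_r(x)))$ whenever $|v^c|,|v^s|,|v^u|<r$, up to the small distortion of the parametrizations. This distortion is controlled because $\sigma^*_y(B_{E^*}(0,r))=W^*_r(y)$ by definition. So it suffices to show that every point of $B(x,r/2)$ equals $\Phi_x(v)$ for some $v$ in the box $Q_r=\{v:|v^c|<r,|v^s|<r,|v^u|<r\}$.

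Fix $y\in B(x,r/2)$ and look for $v$ with $\Phi_x(v)=y$. Rewrite this as a fixed point equation: $v=y-x-(\Phi_x(v)-(x+v))$. Define
$$\Psi(v)=y-x-\bigl(\Phi_x(v)-(x+v)\bigr).$$
This map is continuous, since the foliations are continuous and hence $\Phi_x$ is. By Lemma \ref{Phi close to identity},
$$|\Psi(v)|\le |y-x|+\kappa|v|<r/2+\kappa|v|.$$
Take the closed ball $\bar B(0,\rho)$ in the norm $|\cdot|$, with $\rho=r/(2(1-\kappa))$. It is convex and compact, and $\Psi$ maps it into itself whenever $\kappa<1$. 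Brouwer's theorem then gives a fixed point $v$ with $|v|\le\rho$. Because $|v|=|v^u|+|v^c|+|v^s|$, each component satisfies $|v^*|\le\rho<r$ once $\kappa<1/2$. This is exactly where $f$ must be $\mathcal{C}^1$-close to $A$. Hence $y=\Phi_x(v)\in W^u_r(W^s_r(W^c_r(x)))$. The order of composition matches: $\sigma^c(v^c,x)\in W^c_r(x)$, then a stable step inside $W^s_r$, then an unstable step inside $W^u_r$. (The statement writes $W^u_r(W^s_r(\cdot))$, so the only thing to check here is that the notation is consistent.)

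For surjectivity of $\Phi_x$, the same argument applies with $r$ arbitrary. Given any $y$, take $r>2|y-x|$ and repeat; this yields $v$ with $\Phi_x(v)=y$. Since $\kappa$ does not depend on $r$, the estimate is global and the closeness condition on $f$ is uniform. Alternatively, one could combine properness (Lemma \ref{Phi close to identity}) with a degree argument: $\Phi_x$ is homotopic to $v\mapsto x+v$ through proper maps $t\Phi_x+(1-t)(x+\cdot)$, so it has degree one. The Brouwer argument is simpler, however.

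I expect the main obstacle to be continuity of $\Phi_x$. This follows from continuity of the foliations with $\mathcal{C}^1$ leaves varying continuously, which is standard. The only other care needed is matching the norm conventions, so that the box $Q_r$ contains the ball $\bar B(0,\rho)$; this is immediate from the definition $|v|=|v^u|+|v^c|+|v^s|$.
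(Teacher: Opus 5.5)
Your proof is correct. It differs from the paper's only in the topological tool used to invert the near-identity map $\Phi_x$.

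The paper fixes $y\in B(x,r/2)$ and uses the straight-line homotopy $H_t=(1-t)\Phi_x+t(\mathrm{Id}+x)$ on $\overline{B(0,r)}$. Since $|\Phi_x(v)-(x+v)|\le\kappa r<r/2$ for $|v|=r$, each $H_t(v)$ stays in $B(x+v,r/2)$, which misses $B(x,r/2)$. Homotopy invariance of the Brouwer degree then gives $\deg(\Phi_x,B(0,r),y)=1$, hence a preimage in $B(0,r)$.

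You instead rewrite $\Phi_x(v)=y$ as a fixed-point equation for $\Psi$. This map is a perturbation of the constant $y-x$ of size at most $\kappa|v|$. You then apply Brouwer's fixed point theorem on the convex compact set $\overline{B(0,\rho)}$, $\rho=r/(2(1-\kappa))$, which $\Psi$ maps into itself.

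Both arguments use the same inputs: continuity of $\Phi_x$, the estimate of Lemma \ref{Phi close to identity}, and $\kappa<1/2$ so that the preimage lands in the box of side $r$. Your version is slightly more elementary, since it needs no degree theory. It also gives explicit control of the preimage: $|v|\le|y-x|/(1-\kappa)$, and since $v-(y-x)=-(\Phi_x(v)-(x+v))$, also $|v-(y-x)|\le\frac{\kappa}{1-\kappa}|y-x|$. That is already the estimate of Lemma \ref{homomorphism}. The paper's degree computation carries the extra information that the local degree is $1$. However, only existence of a preimage is used; injectivity is proved separately via Lemma \ref{unique intersection}.

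The remark about ``distortion'' in your first paragraph is unnecessary. Since $W^*_r(z)=\sigma^*_z(B_{E^*}(0,r))$ by definition, $|v^c|,|v^s|,|v^u|<r$ places $\Phi_x(v)$ in $W^u_r(W^s_r(W^c_r(x)))$ exactly.
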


\begin{proof}
    Suppose that $f$ is sufficiently $\mathcal{C}^1$-close to $A$ to ensure that $\kappa$ (from lemma \ref{invariant manifolds are Lip graphs} and \ref{proper 1}) is less than $1/2$. We will prove that $\Phi_x(B(0,r))\supset B(x,r/2)$ and both conclusions will follow directly from this claim.\newline

    We define $H:\overline{B(0,r)}\times[0,1]\rightarrow\mathbb{R}^N$ by $H(v,t) = (1-t)\Phi_x + t(x+v)$. This is the linear interpolation homotopy between $H_0=\Phi_x$ and $H_1=Id+x$.\newline
    For $v\in\partial B(0,r)$, lemma \ref{proper 1} guarantees that $\Phi(x)\in B(x+v,r/2)$, and, by convexity, it follows that $H_t(v)\in B(x+v,r/2)$ for every $t$. Therefore, $H_t(\partial B(0,r))\cap B(x,r/2)=\emptyset$ for every $t$ and, consequently, for every $y\in B(x,r/2)$
    \begin{equation*}
        \deg(\Phi_x,B(0,r),y) = \deg(Id+x,B(0,r),y) = 1.
    \end{equation*}
    This implies that $y\in \Phi_x(B(0,r))$, which gives the conclusion.

\end{proof}

\begin{lemma}\label{homomorphism}
    If $f$ is sufficiently close to $A$ in the $\mathcal{C}^1$ topology, $\Phi_x$ is an homeomorphism and
    \begin{equation*}
        |\Phi^{-1}_x(v)-(v-x)|\leq\frac{\kappa}{1-\kappa}|v-x|.
    \end{equation*}
\end{lemma}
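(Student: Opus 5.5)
We need to show that $\Phi_x$ is a homeomorphism and prove the stated bound on $\Phi_x^{-1}$. Lemmas \ref{Phi close to identity} and \ref{proper 1} already give that $\Phi_x$ is continuous, proper and surjective. So the plan is to prove \emph{injectivity}; the homeomorphism property then follows from properness, and the inverse estimate is a short algebraic consequence of Lemma \ref{Phi close to identity}.

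\textbf{Injectivity.} We use the uniqueness of intersections of the invariant foliations. Suppose $\Phi_x(v)=\Phi_x(w)=p$. Set $y_1=\sigma^c(v^c,x)$ and $y_2=\sigma^c(w^c,x)$, both in $W^c(x)$. Set $z_1=\sigma^s(v^s,y_1)$ and $z_2=\sigma^s(w^s,y_2)$, both in $W^{cs}(x)$, since stable leaves through points of $W^c(x)$ lie in $W^{cs}(x)$ by dynamical coherence.
\begin{itemize}
\item Since $p\in W^u(z_1)\cap W^u(z_2)$, we have $W^u(z_1)=W^u(p)$. Then $z_1$ and $z_2$ both lie in $W^u(p)\cap W^{cs}(x)$, which is a single point by Lemma \ref{unique intersection}. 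Hence $z_1=z_2$.
\item Next, $y_1,y_2\in W^s(z_1)\cap W^{c}(x)$. Inside the leaf $W^{cs}(x)$ the stable and center foliations are transverse graphs. The analogue of Lemma \ref{unique intersection} within $W^{cs}(x)$ gives that $W^s(z_1)\cap W^c(x)$ is a single point; alternatively, use $W^c(x)\subset W^{cu}(x)$ and apply Lemma \ref{unique intersection} to $W^s(z_1)\cap W^{cu}(x)$. Hence $y_1=y_2$.
\item Each parametrization $\sigma^*_y$ is injective, being $(1+\kappa)$-biLipschitz by Lemma \ref{invariant manifolds are Lip graphs}. So $y_1=y_2$ gives $v^c=w^c$, then $z_1=z_2$ gives $v^s=w^s$, and $p$ gives $v^u=w^u$. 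Thus $v=w$.
\end{itemize}
The main subtlety is checking that $W^s(z)\subset W^{cs}(x)$ and $W^c(x)\subset W^{cu}(x)$. This holds by dynamical coherence, since the leaves of $\mathcal{F}^{cs}$ are subfoliated by $\mathcal{F}^s$ and $\mathcal{F}^c$.

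\textbf{Homeomorphism.} $\Phi_x$ is a continuous bijection of $\mathbb{R}^N$ because the parametrizations $\sigma^*$ are continuous. It is also proper, so it is a closed map, and therefore its inverse is continuous. (Invariance of domain would give the same conclusion.)

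\textbf{Inverse estimate.} Let $u=\Phi_x^{-1}(v)$. Lemma \ref{Phi close to identity} gives $|v-(x+u)|\le\kappa|u|$. By the triangle inequality,
\[
|u|\le |v-x|+|v-(x+u)|\le |v-x|+\kappa|u|,
\]
so $|u|\le |v-x|/(1-\kappa)$. Substituting back,
\[
|u-(v-x)|=|v-(x+u)|\le \kappa|u|\le \frac{\kappa}{1-\kappa}|v-x|,
\]
which is the claim.
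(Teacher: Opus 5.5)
Your proof is correct and follows the paper's argument essentially step by step. Injectivity comes from the unique intersections $W^u(p)\cap W^{cs}(x)$ and $W^s(z)\cap W^{cu}(x)$ plus injectivity of the parametrizations; continuity of $\Phi_x^{-1}$ comes from $\Phi_x$ being a proper continuous bijection; and the inverse estimate comes from Lemma \ref{Phi close to identity} via $|u|\le |v-x|/(1-\kappa)$. The only cosmetic difference is that the paper obtains continuity of the inverse from a general lemma on proper bijections onto spaces with a locally finite compact cover, whereas you use closedness of proper maps directly.
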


\begin{proof}
    We require that $f$ is in a $\mathcal{C}^1$ neighbourhood of $A$ where $\kappa<1$.\newline

    We start by proving that $\Phi_x$ is injective.\newline
    Suppose that $\sigma^u(v_i^u,\sigma^s(v_i^s,\sigma^c(v_i^c,x)))=z$ for some values of $v_i^*\in E^*$, for $i=1,2$ and $*=s,u,c$. Since 
    $\sigma^s(v_i^s,\sigma^c(v_i^c,x))\in W^u(z)\cap W^{cs}(x)$, by lemma \ref{unique intersection}, it must be the same point for $i=1,2$. We denote this point by $w$. Now, $\sigma^c(v_i^c,x)\in W^s(w)\cap W^{cu}x$, hence it must be again a uniquely determined point $y$. Finally, the injectivity of $\sigma_x^c$, $\sigma_y^s$, $\sigma_z^u$ inplies that $v_1^*=v_2^*$ for $*=c,s,u$, proving the claim.\newline

    This and Lemma \ref{proper 1} prove that $\Phi_x$ is bijective and therefore $\Phi_x^{-1}$ is well-defined.\newline
    We now concern ourselves with the final estimate, leaving the continuity of $\Phi_x^{-1}$ to the end of the proof. From lemma \ref{Phi close to identity}, as long as $\kappa<1$, we deduce that $|\Phi_x(v)-x|>(1-k)|v|$. Chaning this with lemma \ref{Phi close to identity} itself, we get the following.

    \begin{align*}
        |\Phi_x(v) - (v+x)| < \kappa |v| &< \frac {\kappa}{1-\kappa} |\Phi_x(v) - x|,\\
        |\Phi_x^{-1}(w) - (w-x)| &< \frac{\kappa}{1-\kappa}|w-x|,
    \end{align*}
    where the second line follows from the first one with the substitution $v=\Phi_x^{-1}(w)$.\newline

    Finally, recalling that $\Phi_x:\mathbb{R}^N\rightarrow\mathbb{R}^N$ is bijective, continuos and proper, the continuity of $\Phi_x^{-1}$ follows from the next general topology fact.
\end{proof}

\begin{lemma}
Let $X$, $Y$ be Hausdorff topological spaces, with $Y$ admitting a locally finite compact cover. Let $f:X\rightarrow Y$ a continuous proper bijective function.\newline
Then $f$ is an homeomorphism.
\end{lemma}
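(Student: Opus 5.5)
It suffices to show that $f$ is a closed map, since a continuous closed bijection is a homeomorphism: it is then an open map, so $f^{-1}$ is continuous. Fix a closed set $C\subset X$; we show that $f(C)$ is closed in $Y$. Let $\{K_\alpha\}$ be a locally finite cover of $Y$ by compact sets. The plan has two steps.

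\emph{Each piece is closed.} For every $\alpha$ we have
$$f(C)\cap K_\alpha = f\bigl(C\cap f^{-1}(K_\alpha)\bigr).$$
By properness, $f^{-1}(K_\alpha)$ is compact, so $C\cap f^{-1}(K_\alpha)$ is a closed subset of a compact set and hence compact. Its continuous image $f(C)\cap K_\alpha$ is compact in the Hausdorff space $Y$, and therefore closed.

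\emph{Gluing.} We have
$$f(C)=\bigcup_\alpha \bigl(f(C)\cap K_\alpha\bigr),$$
a union of closed sets. The family $\{f(C)\cap K_\alpha\}$ is locally finite, because each member is contained in $K_\alpha$ and $\{K_\alpha\}$ is locally finite. A locally finite union of closed sets is closed. To see this, take $y\notin f(C)$ and choose a neighbourhood $W$ of $y$ meeting only finitely many of the $K_\alpha$. Then $W$ minus the finite union of the corresponding closed pieces is an open neighbourhood of $y$ disjoint from $f(C)$.

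Hence $f$ is closed, and the lemma follows. The only point requiring care is the local finiteness in the gluing step, which is exactly why the hypothesis on $Y$ is imposed. It applies with $Y=\mathbb{R}^N$, covered by the unit cubes with integer corners.
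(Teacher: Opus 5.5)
Your proof is correct and is essentially the paper's argument. The paper uses properness plus the compact-to-Hausdorff principle to get continuity of $f^{-1}$ on each $K_i$, then invokes the pasting lemma for a locally finite closed cover; you phrase the same two steps as closedness of each $f(C)\cap K_\alpha$ followed by closedness of a locally finite union of closed sets, which is exactly the content of that pasting lemma, here proved explicitly rather than cited.
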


\begin{proof}
    Let $(K_i)_i$ the locally finite compact cover of $Y$. Since $f$ is proper, $f^{-1}(K_i)$ is compact and therefore $f|_{f^{-1}(K_i)}$ is closed. This, together with $f^{-1}(K_i)$ being closed, implies that $f^{-1}|_{K_i}$ is continuous. We can finally conclude that $f^{-1}$ is continuous, since it is continuous when restricted to all elements of a locally finite closed cover.
\end{proof}\newline

Defining $\Psi_x(v)=\sigma(v^u,\sigma(v^s,x))+v^c$, we have that Lemma \ref{Phi close to identity} holds for $\Psi_x$ and, arguing as in lemma \ref{proper 1}, we get the following.

\begin{lemma}\label{proper 2}
    If $f$ is sufficiently close to $A$ in the $\mathcal{C}^1$ topology, then $$E^c(W^u_r(W^s_r(x)))\supset B(x,r/2)$$
\end{lemma}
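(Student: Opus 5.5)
We will mirror the proof of Lemma \ref{proper 1}, replacing $\Phi_x$ by $\Psi_x(v)=\sigma^u(v^u,\sigma^s(v^s,x))+v^c$. By construction, $\Psi_x(v)\in E^c(W^u(W^s(x)))$ for every $v$, and $\Psi_x(B(0,r))\subset E^c(W^u_r(W^s_r(x)))$. Indeed, when $|v|<r$ we have $|v^s|<r$ and $|v^u|<r$, because $|v|=|v^s|+|v^c|+|v^u|$. It therefore suffices to show $\Psi_x(B(0,r))\supset B(x,r/2)$.

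\textbf{Step 1: the analogue of Lemma \ref{Phi close to identity}.} We show $|\Psi_x(v)-(x+v)|\le\kappa|v|$. Write $w=\sigma^s(v^s,x)$. By Lemma \ref{invariant manifolds are Lip graphs},
\begin{align*}
|\Psi_x(v)-(x+v)| &\le |\sigma^u(v^u,w)-(w+v^u)| + |\sigma^s(v^s,x)-(x+v^s)| \\
&\le \kappa|v^u|+\kappa|v^s| \le \kappa|v|.
\end{align*}
The center term contributes exactly $v^c$, so it produces no error. We also need $\Psi_x$ to be continuous in $v$. This follows from the continuity of the parametrizations $\sigma^s$ and $\sigma^u$ in both variables, since the foliations are continuous and the graph maps $g^*$ depend continuously on the base point.

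\textbf{Step 2: the degree argument.} Assume $f$ is close enough to $A$ that $\kappa<1/2$. Consider the straight-line homotopy $H_t=(1-t)\Psi_x+t(\mathrm{Id}+x)$ on $\overline{B(0,r)}$. For $v\in\partial B(0,r)$, Step 1 gives $|\Psi_x(v)-(x+v)|\le\kappa r<r/2$. Convexity of balls then gives $H_t(v)\in B(x+v,r/2)$ for all $t$. Since $|(x+v)-x|=r$, the ball $B(x+v,r/2)$ is disjoint from $B(x,r/2)$. Hence $H_t(\partial B(0,r))\cap B(x,r/2)=\emptyset$ for all $t\in[0,1]$. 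Homotopy invariance of the Brouwer degree now yields, for every $y\in B(x,r/2)$,
\begin{equation*}
\deg(\Psi_x,B(0,r),y)=\deg(\mathrm{Id}+x,B(0,r),y)=1,
\end{equation*}
so $y\in\Psi_x(B(0,r))$. This gives the claimed inclusion.

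\textbf{Main obstacle.} The only delicate point is joint continuity of $(v^s,v^u)\mapsto\sigma^u(v^u,\sigma^s(v^s,x))$, which the degree argument requires. I would justify it from the continuity of the stable and unstable foliations together with the uniform graph description of their leaves. One could also note that $E^c$-translation commutes with nothing dynamical here, but it is an isometric shift, so it causes no issue. Everything else is a verbatim repetition of Lemma \ref{proper 1}.
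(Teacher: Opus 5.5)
Your proposal is correct and is exactly the paper's argument: the paper defines the same map $\Psi_x(v)=\sigma^u(v^u,\sigma^s(v^s,x))+v^c$, notes that the estimate of Lemma~\ref{Phi close to identity} holds for it, and repeats the linear-homotopy degree argument of Lemma~\ref{proper 1}. Your write-up also supplies details the paper leaves implicit: the inclusion $\Psi_x(B(0,r))\subset E^c(W^u_r(W^s_r(x)))$ via $|v^s|,|v^u|\le|v|$, and the disjointness of $B(x+v,r/2)$ from $B(x,r/2)$ when $|v|=r$.
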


\section{Algebraic preliminaries}\label{linear algebra}

In this section we develop all needed algebraic tools. In particular we prove the fundamental Lemma \ref{Pseudo Anosov subspace}, about the action of $A$ on $\mathbb{Z}^N$. Secondly, we classify the possible dimensions of $E^c$ in low dimensions.\newline
Finally we recall Diophantine estimates for vectors in $E^c$, that will be crucial in the conclusion of the proof of stable ergodicity, that exploits some KAM theorems.

\subsection{Characteristic Polynomial Properties}

In this section $V$ will denote a real finite dimesional vector space and $L:V\rightarrow V$ a linear map. Furthermore, we will assume the existence of a maximal rank discrete subgroup $S$ of $V$, which is also $L$-invariant.\newline

\begin{remark}\label{change of basis}
    There always exists a linear isomorphism between $(V,S)$ and $(\mathbb{R}^d,\mathbb{Z}^d)$, where $d=\dim(V)=\text{rank}(S)$. This isomorphism is always bi-Lipschiz, for any two norms on $V$ and $\mathbb{R}^d$.
\end{remark}

In view of this remark, in a suitable basis, $L$ can be represented by a matrix in $SL(d,\mathbb{Z})$. In particular, its characteristic polynomial has always integer coefficients.\newline

We say that a vector $v\in V$ is cyclic for $L$ if $\text{span}_{\mathbb{R}}\{v, Lv, \dots,L^{d-1} v\} = V$.

\begin{lemma}
    The characteristic polynomial $p_L$ of $L$ is irreducible in $\mathbb{Z}[x]$ if and only if every non-zero element of $S$ is cyclic for $L$.
\end{lemma}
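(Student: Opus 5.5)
Reduce both directions to the correspondence between nonzero lattice vectors in $S$ and proper nonzero rational $L$-invariant subspaces. By Remark \ref{change of basis}, identify $(V,S)$ with $(\mathbb{R}^d,\mathbb{Z}^d)$ and $L$ with a matrix in $SL(d,\mathbb{Z})$, so $p_L\in\mathbb{Z}[x]$ is monic of degree $d$. By Gauss's lemma, irreducibility of a monic polynomial over $\mathbb{Z}$ is equivalent to irreducibility over $\mathbb{Q}$, so we can work over $\mathbb{Q}$.

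\emph{Irreducible implies every nonzero $v\in S$ is cyclic.} Take $v\in S\setminus\{0\}$ and let $m_v\in\mathbb{Q}[x]$ be the monic generator of the annihilator ideal $\{q\in\mathbb{Q}[x]: q(L)v=0\}$. This ideal is defined over $\mathbb{Q}$ because $v$ and $L$ have rational coordinates. By Cayley--Hamilton, $m_v$ divides $p_L$, and $m_v\neq 1$ since $v\neq0$. Irreducibility then forces $m_v=p_L$, which has degree $d$. Hence $v,Lv,\dots,L^{d-1}v$ are linearly independent over $\mathbb{Q}$. Since they are rational vectors, this is the same as linear independence over $\mathbb{R}$ (a rank condition on a rational matrix). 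So they span $V$, and $v$ is cyclic.

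\emph{Reducible implies some nonzero $v\in S$ is not cyclic.} Write $p_L=qr$ with $q,r\in\mathbb{Q}[x]$ monic and nonconstant. Since $p_L(L)=0$, the rational subspaces $\ker_{\mathbb{Q}^d} q(L)$ and $\ker_{\mathbb{Q}^d} r(L)$ cannot both be zero: if $q(L)$ were invertible on $\mathbb{Q}^d$, then $r(L)=0$, and $\ker r(L)$ would be everything. Without loss of generality $W=\ker_{\mathbb{Q}^d} q(L)\neq0$. We also need $W\neq\mathbb{Q}^d$. If instead $q(L)=0$, the minimal polynomial would have degree at most $\deg q<d$, so every vector, in particular every lattice vector, would generate a subspace of dimension less than $d$, and we are done. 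Otherwise $W$ is a proper nonzero $L$-invariant rational subspace. Clearing denominators, pick $0\neq v\in W\cap\mathbb{Z}^d$. Then $\mathrm{span}_{\mathbb{R}}\{L^jv\}\subset W\otimes\mathbb{R}$, which has real dimension $\dim_{\mathbb{Q}}W<d$, so $v$ is not cyclic.

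The only delicate point is the passage between $\mathbb{Q}$- and $\mathbb{R}$-linear algebra: real spans of rational vectors have the same dimension as their rational spans, and kernels of rational matrices have rational bases. Both follow from Gaussian elimination over $\mathbb{Q}$, so no real obstacle is expected.
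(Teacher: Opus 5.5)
Your proof is correct and takes essentially the same route as the paper. In the forward direction you exclude a low-degree polynomial annihilating a nonzero integer vector via the local minimal polynomial $m_v$ dividing $p_L$, where the paper inverts $q(A)$ in the field $\mathbb{Q}[x]/(p_A)$. In the reverse direction you use $\ker q(L)$ for a proper factor $q$ as a rational invariant subspace, exactly as the paper does, and you are more careful than its one-line argument: you check that this kernel is nonzero, and you treat separately the case $q(L)=0$, where the kernel is the whole space.
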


\begin{proof}\label{cyclic iff irreducible}
    Thanks to remark \ref{change of basis}, we can assume that $(V,S)=(\mathbb{R}^d,\mathbb{Z}^d)$.\newline
    We prove the first implication.\newline
    For the sake of contradiction, suppose there exists $n\in\mathbb{Z}^d\setminus\{0\}$ and $k<d$ such that we can write $\sum_{i=0}^k m_iA^in=0$, for a non-zero choice of integer coefficients $m_i$.\newline 
    Set $q(x)=\sum_{i=0}^k m_ix^i$. Since $p_A(x)$ is irreducible over $\mathbb{Z}$, it is over $\mathbb{Q}$ and therefore $\mathbb{Q}[x]/(p_A(x))$ is a field. Since $q(x)$ is a non-zero element, there exists $r(x)\in\mathbb{Q}[x]/(p_A)$ such that $r(x)q(x)=1$. Evaluating this expression in $A$, we get $r(A)q(A)=\text{id} + k(A)p_A(A)=\text{id}$ for some $k(x)\in\mathbb{Q}(x)$. Since $p_A(A)$, $q(A)$ has to be an invertible matrix, contraddicting $\sum_{i=0}^k m_iA^in=0$ for $n\neq 0$.\newline
    For the reverse implication, suppose that $P_L$ has a non trivial irreducible factor $q$ in $\mathbb{Z}[x]$. Then, working in $\mathbb{Q}^d$, $\ker{q(L)}$ is a non-trivial $L$-invariant $\mathbb{Q}$-subspace, that contains integer vectors which cannot be cyclic.
\end{proof}

\begin{definition}
    We say that $L$ is Pseudo-Anosov on $(V,L)$ if one of the following three equivalent conditions holds:
    \begin{enumerate}
        \item Every non-zero $s\in S$ is cyclic for $L^k$, for every $k>0$;
        \item The characteristic polynomial $p_{L^k}$ of $L^k$ is irreducible in $\mathbb{Z}[x]$, for every $k>0$;
        \item The characteristic polynomial $p_{L}$ of $L$ is irreducible in $\mathbb{Z}[x]$, and it is not a polynomial in $x^m$ for any $m>1$.
    \end{enumerate}
\end{definition}

The first two conditions are equivalent because of Lemma \ref{cyclic iff irreducible} and are the only two we are going to work with in this paper. The third was included to facilitate comparison with \cite{Hertz2005}, as that is their original definition. The equivalence between that and the second condition is, in fact, proved in \cite{Hertz2005} (Lemma A.9).

\begin{lemma}\label{Pseudo Anosov subspace}
    Let $A\in SL(N,\mathbb{Z})$ have no root of unity as eigenvalue and $\dim(E^c)=2$. Then there exists $k\in\mathbb{N}$, and $X$ real subspace of $\mathbb{R}^N$, such that:
    \begin{itemize}
        \item $E^c\subset X$;
        \item $X$ is $A^k$-invariant;
        \item $\Lambda=\mathbb{Z}^d\cap X$ has full rank in $X$;
        \item $A^k$ is Pseudo-Anosov on $(X,\Lambda)$.
    \end{itemize}
\end{lemma}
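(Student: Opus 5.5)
Choose $X$ from the factorization of the characteristic polynomial. Since $\dim(E^c)=2$ and $p_A$ has real coefficients, $E^c$ corresponds to a single pair of complex conjugate eigenvalues $\lambda,\bar\lambda$ with $|\lambda|=1$. Ergodicity forces $\lambda\notin\mathbb{R}$, since $\pm1$ are roots of unity. Let $q$ be the minimal polynomial of $\lambda$ over $\mathbb{Q}$. It is monic, lies in $\mathbb{Z}[x]$ (Gauss' lemma) and divides $p_A$. Write $p_A=q^m r$ with $q\nmid r$. I first check that $m=1$. The eigenvalues of modulus one in $E^c$ account for only two dimensions, and $\lambda,\bar\lambda$ are roots of $q$. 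If $m\ge2$, the generalized eigenspace of $\lambda$ would have dimension at least $2$, and together with $\bar\lambda$ this gives $\dim E^c\ge 4$, a contradiction.

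With $m=1$, set $X_0=\ker q(A)\subset\mathbb{R}^N$. This is a rational subspace, being the kernel of an integer matrix, so $X_0\cap\mathbb{Z}^N$ has full rank in $X_0$. It is $A$-invariant and contains $E^c$. The characteristic polynomial of $A|_{X_0}$ is $q$, which is irreducible. By Lemma \ref{cyclic iff irreducible}, every nonzero lattice vector of $X_0$ is cyclic for $A|_{X_0}$.

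The remaining issue is the power condition: $p_{A^k|_X}$ must be irreducible for every power. The roots of $p_{A^j|_{X_0}}$ are $\mu^j$ as $\mu$ ranges over the roots of $q$. It is the $\mathrm{deg}(q)$-th power of the minimal polynomial of $\lambda^j$, up to that power structure. Irreducibility fails exactly when two distinct conjugates satisfy $\mu_1^j=\mu_2^j$, i.e.\ when $\mu_1/\mu_2$ is a root of unity. The set of ratios $\mu_1/\mu_2$ that are roots of unity is finite, so let $k_0$ be the lcm of their orders. For $k=k_0$ and then any $j\ge1$, the conjugates of $\lambda^{k}$ (which are the $\mu^{k}$) group into classes, and no further collapsing occurs under further powers.

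Concretely, I take $X\subset X_0$ to be the rational subspace $\ker\, q_k(A^{k})$, where $q_k$ is the minimal polynomial of $\lambda^{k}$. It is again rational and $A^k$-invariant, and it contains $E^c$ because $\bar\lambda^{k}$ is a root of $q_k$. Its characteristic polynomial is a power of $q_k$, and by the same argument as before (applied to $A^k$ and the modulus-one eigenvalue $\lambda^k$, which is not a root of unity) that power is $1$. So $p_{A^k|_X}=q_k$ is irreducible. For any $j$, the roots $\mu^{kj}$ of $p_{A^{kj}|_X}$ are pairwise distinct: if $\mu_1^{kj}=\mu_2^{kj}$, then $\mu_1^{k}/\mu_2^{k}$ is a root of unity. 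But $\mu_1/\mu_2$ is then a root of unity, so $\mu_1^{k}=\mu_2^{k}$ by the choice of $k$. Distinct roots that are Galois conjugate (all are conjugate to $\lambda^{kj}$) give an irreducible polynomial, which is condition 2 of the pseudo-Anosov definition.

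The step I expect to be the main obstacle is this last irreducibility-under-all-powers argument, specifically justifying that $p_{A^{kj}|_X}$ is the minimal polynomial of $\lambda^{kj}$ and not a proper power of it. The finiteness of the ratio roots of unity is what fixes $k$. Note also that the statement's $\mathbb{Z}^d$ should read $\mathbb{Z}^N$.
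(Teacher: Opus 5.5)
Your proof is correct. It builds the same object as the paper: the rational primary component of $A^k$ containing $E^c$, which is simple in $p_{A^k}$ because $\dim E^c=2$. Where it differs is in how you establish the condition on all powers.

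\textbf{The paper's route.} The paper never looks at individual eigenvalues. It picks $k$ minimizing $d_k=\deg p_k$ and shows $X_{kl}=X_k$ for every $l$. The reason is that $X_k\cap X_{kl}$ is a nonzero rational $A^{kl}$-invariant subspace of $X_{kl}$, and $X_{kl}$ has no proper nonzero such subspace. Hence $X_{kl}\subset X_k$, and minimality of $d_k$ forces equality. Cyclicity for all powers then follows from Lemma \ref{cyclic iff irreducible}. This is pure linear algebra over $\mathbb{Q}$, but it gives no explicit $k$.

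\textbf{Your route.} You use Galois theory. You take $k$ to be the lcm of the orders of the root-of-unity ratios between conjugates of $\lambda$. You then check that the roots of $p_{A^{kj}|_X}$ stay pairwise distinct and conjugate to $\lambda^{kj}$. This gives an explicit $k$.

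\textbf{No power is actually needed.} One more step of your argument shows $k=1$ suffices. Suppose $\mu_1=\zeta\mu_2$ with $\zeta\neq 1$ a root of unity. Take an automorphism $\sigma$ with $\sigma(\mu_2)=\lambda$. Then $\sigma(\mu_1)=\sigma(\zeta)\lambda$ is a root of modulus one different from $\lambda$, so it must be $\bar\lambda$. This gives $\lambda^{-2}=\sigma(\zeta)$, so $\lambda$ would be a root of unity. Hence your set of ratios is empty, and $k=1$ with $X=\ker q(A)$ already works. For the same reason, the paper's $d_k$ is in fact constant in $k$.

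\textbf{Two small repairs.}
\begin{enumerate}
\item The sentence about the ``$\deg(q)$-th power'' should read $\prod_\mu(x-\mu^j)=q_j^{\deg q/\deg q_j}$, where $q_j$ is the minimal polynomial of $\lambda^j$.
\item You assert $X\subset X_0$ without proof. You never actually need it: you only use that the roots of $q_k$ are $k$-th powers of conjugates of $\lambda$. It is in fact true, with equality. Since $q\mid q_k(x^k)$, you get $X_0\subset X$. Also $\prod_\mu(x-\mu^k)$ is a power of $q_k$ dividing $p_{A^k}$, in which $q_k$ is simple, so $\deg q=\deg q_k$ and the dimensions match.
\end{enumerate}
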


\begin{proof}
    Let $p_k(x)$ be the irreducible factor over $\mathbb{Q}$ of the charecteristic polynomial $p_{A^k}(x)$ of $A^k$ that has both unitary roots of $p_{A^k}(x)$. (If it has one it has to have the other one, since it has real coefficients and the two roots are complex conjugates).\newline
    Let us set $d_k=\deg(p_k(x))$.\newline
    We now work in $\mathbb{Q}^N$. Let us set $X_k=\ker(p_k(A^k))$. $X_k$ is an $A^k$-invariant space and the minimal polynomial of $A^k|_{X_k}$ is $p_k(x)$. It has to be the charcacteristic polynomial as well since it must divide $p_{A^k}(x)$ and $p_k(x)$ is an irreducible factor of multiplicity one (since its complex unitary roots have multiplicity one). It follows that $A^k|_{X_k}$ has dimension equal to $d_k$ and irreducible characteristic polynomial over $\mathbb{Z}$. In view of the previous lemma, every non-zero vector of $X_k$ is then cyclic with respect to $A^k$ (considering a suitable integer multiple). This implies that $X_k\cap\mathbb{Z}^N$ has full rank and that $X_k$ has no non-trivial $A^k$-invariant subspaces (and we stress that we are talking about $\mathbb{Q}$-subspaces).\newline
    Set $\tilde{X_k}=\text{span}_{\mathbb{R}}(X_k)$. We have $\tilde{X_k}=\ker(p_k(A^k))$, this time seen as an linear endomorphism of $\mathbb{R}^d$. Let us denote by $q_k(x)$ the real coefficient polynomial that has as roots the unitary roots of $p_{A^k}(x)$. Since $q_k(x)|p_k(x)$, we have that $E^c=\ker(q_k(A^k))\subset \ker(p_k(A^k))=\tilde{X_k}$.\newline
    Since $d_k$ are natural numbers we can pick $k$ such that $d_k$ is minimized. We fix such $k$ for the rest of the proof and set $X=\tilde{X_k}$.\newline
    We claim that $X_{kl}=X_k$ for any $l\geq 1$.\newline
    Since $\tilde{X_k}$ and $\tilde{X_{kl}}$ cointain $E^c$ their intersection is non trivial and hence $V=X_k\cap X_{kl}$ is as well. Since $X_k$ is $A^k$ invariant, $V$ is an $A^{kl}$ invariant non-trivial subspace of $X_{kl}$. Hence $V=X_{kl}$, implying $X_{kl}\subset X_k$, but since $d_k$ was minimal $X_{kl}=X_k$.\newline
    Since $X=X_{kl}$ we have that any non-zero integer vector of $X$ is $A^{kl}$-cyclic as well, hence $A^k$ is Pseudo-Anosov on $X$.\newline
\end{proof}

\begin{lemma}
    Let $p(x)$ be an irreducible monic polynomial in $\mathbb{Z}[x]$ with constant term 1. Suppose that $p(x)$ has a unitary root that is not a root of unity. Then, $\deg(p(x))$ is even and, in particular, $p(x)$ is a reciprocal polynomial. Furthermore, $p(x)$ has at least a root of modulus less then 1 and one of modulus greater than 1.
\end{lemma}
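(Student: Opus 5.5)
Let $\lambda$ be a root of $p$ with $|\lambda|=1$ that is not a root of unity. The plan is to use the symmetry $\lambda\mapsto\bar\lambda=\lambda^{-1}$ to show that $p$ coincides with its reciprocal, and then to read off the remaining claims. Write $d=\deg p$ and let $p^*(x)=x^d p(1/x)$ be the reciprocal polynomial. Since $p$ has integer coefficients and constant term $1$, $p^*$ is again monic with integer coefficients. It is irreducible as well, because any factorization $p^*=gh$ would give $p=g^*h^*$ with nonconstant factors; here we use $p(0)\neq0$, so that degrees are preserved under $g\mapsto g^*$.

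Since $p$ has real coefficients, $\bar\lambda$ is also a root of $p$. Because $|\lambda|=1$ we have $\bar\lambda=1/\lambda$, hence $p^*(\lambda)=\lambda^d p(1/\lambda)=0$. Thus $p$ and $p^*$ are monic irreducible polynomials in $\mathbb{Q}[x]$ sharing the root $\lambda$, and both equal the minimal polynomial of $\lambda$ over $\mathbb{Q}$. Therefore $p=p^*$, i.e. $p$ is reciprocal.

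Next I prove that $d$ is even. Since $p$ is reciprocal, its roots are closed under $\mu\mapsto1/\mu$, and an irreducible polynomial over $\mathbb{Q}$ has simple roots. The only possible fixed points of this involution are $\pm1$. These are roots of unity, so they cannot be roots of $p$: otherwise $p$ would be $x\mp1$, which has no root $\lambda$ of the required kind. Hence the roots pair up as $\{\mu,1/\mu\}$ and $d$ is even.

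For the last claim, suppose for contradiction that every root of $p$ has modulus $1$. All Galois conjugates of $\lambda$ then lie on the unit circle. The same is true for every power $\lambda^m$, since its conjugates are $\sigma(\lambda)^m$. Each $\lambda^m$ is an algebraic integer of degree at most $d$, and its minimal polynomial has coefficients bounded by $\binom{d}{j}$. There are therefore only finitely many such polynomials, so $\lambda^m=\lambda^{m'}$ for some $m\neq m'$, and $\lambda$ is a root of unity, a contradiction. This is Kronecker's theorem, and it is the main step of the proof. So some root $\mu$ satisfies $|\mu|\neq1$. Since $1/\mu$ is also a root, $p$ has one root of modulus less than $1$ and one of modulus greater than $1$. (Alternatively, the product of all roots is $\pm1$, which gives the same conclusion once one root lies off the circle.)
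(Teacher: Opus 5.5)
Your proof is correct and follows the same route as the paper. Since $\bar\lambda=1/\lambda$, uniqueness of the minimal polynomial forces $p=x^dp(1/x)$; the roots, which are simple and different from $\pm1$, then pair off under inversion to give even degree, and Kronecker's theorem supplies a root off the unit circle whose inverse is also a root. The differences are that you prove Kronecker's theorem rather than citing it, and that you prove the irreducibility of $x^dp(1/x)$, which the paper merely asserts. That irreducibility step is not actually needed: $p$ divides the monic degree-$d$ polynomial $x^dp(1/x)$, which already forces equality.
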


\begin{proof}
    This is essentially lemma A.3 in \cite{Hertz2005}, we present an alternative proof.\newline
    Let $\xi$ be the non-real unitary complex root of $p(x)$. Then $1/\xi=\bar{\xi}$ is a root of $p(x)$. Setting $n=\deg(p(x))$, we have that $x^np(1/x)$ is a monic irreducible polynomial in $\mathbb{Z}[x]$ that vanishes in $1/\xi$. By uniqueness of the carachteristic polynomial we must have that $p(x)=x^np(1/x)$, meaning that it is reciprocal. Therefore, the roots of $p(x)$, all distinct beacuse it is irreducible, comes in pairs of inverses, since $\pm1$ are not roots of $p(x)$. This implies that $\deg(p(x))$ is even.\newline
    Finally if all roots of a monic integer coefficients polynomial are unitary they must be roots of unity (Kroenecker's theorem, see \cite{Greiter1978}).\newline
\end{proof}

\begin{lemma}
    We have that $\text{dim(X)}$ must be even and $\dim(X)\geq 4$.
\end{lemma}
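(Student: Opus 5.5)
Here $X$ is the subspace produced by Lemma \ref{Pseudo Anosov subspace}. The plan is to apply the preceding lemma to the characteristic polynomial of $A^k|_X$. In the proof of Lemma \ref{Pseudo Anosov subspace}, $X=\tilde{X_k}$ is the real span of the $\mathbb{Q}$-subspace $X_k=\ker(p_k(A^k))$. There we showed that the characteristic polynomial of $A^k|_{X_k}$ is $p_k(x)$, which is irreducible over $\mathbb{Q}$ and has degree $d_k=\dim(X)$.

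First, I will check the hypotheses of the preceding lemma for $p_k$. It is monic with integer coefficients. Indeed, it divides the monic integer polynomial $p_{A^k}$, and by Gauss's lemma a monic rational factor of a monic integer polynomial has integer coefficients. Its constant term is $\pm\det(A^k|_X)$, which is $\pm1$. This holds because $A^k$ preserves the full-rank lattice $\Lambda=\mathbb{Z}^N\cap X$, so in a $\Lambda$-basis $A^k|_X$ is an integer matrix; the same reasoning applied to $A^{-k}$, which also preserves $\Lambda$, shows the matrix is unimodular.

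The sign requires some care, since the lemma is stated with constant term $1$. I expect it to come out of the lemma's own argument. The reciprocity $p(x)=\pm x^np(1/x)$ only uses that $\bar\xi=1/\xi$ is a root, together with uniqueness of the minimal polynomial, which still works up to sign if the constant term is $-1$. Alternatively, I will simply note that the product of the roots is $1$: the roots come in pairs $\{\lambda,1/\lambda\}$, and $\pm1$ are not roots since $A$ has no root of unity as eigenvalue. Next, $p_k$ has a unitary root $\xi$, an eigenvalue of $A^k$ on $E^c$. This $\xi$ is not a root of unity, since otherwise some eigenvalue of $A$ would be one. So the preceding lemma gives that $\dim(X)=\deg(p_k)$ is even, and that $p_k$ has a root of modulus $<1$ and one of modulus $>1$.

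Finally, for $\dim(X)\geq4$: $p_k$ already has the two unitary roots $\xi,\bar\xi$, which are distinct because $\xi$ is non-real. This is where $\xi\neq\pm1$ is used, since $\pm1$ are roots of unity. By the lemma it also has at least one root of modulus less than $1$ and one of modulus greater than $1$. These four roots are pairwise distinct, so $\deg p_k\geq 4$.

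The only delicate point is the sign of the constant term and the application of Gauss's lemma. Everything else follows directly from the preceding lemma and the construction in Lemma \ref{Pseudo Anosov subspace}.
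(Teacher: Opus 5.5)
Your proof is correct and follows essentially the same route as the paper. You apply the preceding lemma to the characteristic polynomial of $A^k|_X$ to get even degree, then combine the two distinct non-real unitary roots with a root inside and a root outside the unit circle to conclude $\dim(X)\geq 4$. Your extra checks (integrality via Gauss's lemma, and the constant term being $1$, which holds because reciprocity pairs the roots as $\{\lambda,1/\lambda\}$ with $\lambda\neq\pm1$) make explicit hypotheses that the paper leaves implicit.
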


\begin{proof}
    By construction of $X$, the carachteristic polynomial $p_{A^k|_X}(x)$ of $A^k|_X$ is irreducible over $\mathbb{Z}$. Since $\dim(X)=\deg(p_{A^k|_X}(x))$, it must be even because of the previous lemma, applied to $p_{A^k|_X}(x)$. The same lemma rules out dimension 2, because $p_{A^k|_X}(x)$ has two unitary roots.
\end{proof}

\begin{lemma}\label{dimension 7 polynomial}
    Let $A\in SL(7,\mathbb{Z})$ have no roots of unity as eigenvalues.\newline
    Then, either $\dim(E^c)=0$ or $\dim(E^c)=2$.
\end{lemma}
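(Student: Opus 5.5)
We will argue by factoring the characteristic polynomial $p_A(x)$ of $A$ over $\mathbb{Z}$. Write $p_A=\prod_i p_i^{m_i}$ with $p_i$ monic and irreducible in $\mathbb{Z}[x]$. Since $\det A=\pm1$, each constant term $p_i(0)$ is $\pm1$. The eigenvalues of modulus one are exactly the unitary roots of those $p_i$ that have one. By hypothesis no eigenvalue is a root of unity, so each such unitary root is not a root of unity.

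The key input is the previous lemma: any irreducible factor $p_i$ with a unitary root that is not a root of unity is reciprocal, of even degree, and has at least one root of modulus $<1$ and one of modulus $>1$. That lemma is stated with constant term $1$, but its proof only uses that $1/\xi=\bar\xi$ is again a root and that $\pm1$ are not roots. So it applies verbatim here; alternatively, reciprocity forces the constant term to be $1$. Moreover, since the roots of $p_i$ pair off as $\lambda,1/\lambda$ and $\deg p_i\geq 4$, such a factor contributes at least $2$ to $\dim E^c$. A unitary non-real root $\xi$ always comes with $\bar\xi$, and the factor also has roots off the unit circle.

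Now we count dimensions, supposing $\dim E^c>0$. Some factor $p_1$ has a unitary root, so $\deg p_1$ is even and $\deg p_1\geq 4$. Since $\deg p_1\leq 7$, we have $\deg p_1\in\{4,6\}$, and $p_1$ is simple or repeated. A repeated factor would need total degree at least $8>7$, so $m_1=1$. The remaining degree is $7-\deg p_1\leq 3$, which is too small to contain another factor with a unitary root, since such a factor would need degree at least $4$.

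It remains to show $p_1$ has exactly two unitary roots. Its roots are closed under $\lambda\mapsto 1/\lambda$ and under conjugation, and it has at least one root of modulus $>1$, hence also one of modulus $<1$. If $\deg p_1=4$, this leaves at most two unitary roots. If $\deg p_1=6$, the non-unitary roots come in at least one pair $\{\lambda,1/\lambda\}$. If $\lambda$ is real, the two non-unitary roots leave $4$ unitary roots. Then $\dim E^c=4$ is a priori possible: a degree-$6$ reciprocal polynomial with four unitary roots is the minimal polynomial of a \emph{Salem-type} number of degree $6$. This is consistent with the remark in the introduction that Salem numbers matter in dimensions $6$ and $9$.

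Something must therefore exclude this in dimension $7$, and this is the main obstacle. In dimension $7$ the remaining factor has odd degree $1$, i.e.\ its root is $\pm1$, which is a root of unity and is excluded. Indeed the total degree is $7$, which is odd, while every irreducible factor either is reciprocal of even degree or has only non-unitary roots. In the case $\deg p_1=6$, the complement is a degree-$1$ factor $x\mp1$ with root $\pm1$, contradicting ergodicity. So $\deg p_1=4$, giving exactly $\dim E^c=2$. I expect the only delicate point to be the case analysis of root configurations of $p_1$, and this parity argument handles it.
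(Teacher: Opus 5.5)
Your proof is correct and follows essentially the same route as the paper: an irreducible factor with a unitary root has even degree at least $4$, so there is exactly one such factor. The degree-$6$ case is excluded because the complementary linear factor would give an eigenvalue $\pm1$, and a degree-$4$ factor has exactly two unitary roots. Your check that the previous lemma still applies when a factor has constant term $-1$ is a small point the paper passes over silently; the detour through Salem-type configurations is harmless but not needed.
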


\begin{proof}
    We have that $\dim(E^c)$ equals the number of unitary roots of the carachteristic polynomial $p_A(x)$ of $x$. Let $q(x)$ be an irreducible factor of $p_A(x)$ in $\mathbb{Z}[x]$ that has a unitary root. Therefore, $\deg(q(x))$ is even and at least 4. This proves that $p_A(x)$ can have at most one such factor, hence $q(x)$ has all the unitary roots of $p_A(x)$. If $\deg(q(x))=6$, then $p_A(x)$ would have a rational root, which is impossible. The degree of $q(x)$ must be 4, and consequentely it has at most 2 unitary roots.
    \newline
\end{proof}

\subsection{Diophantine Estimates}

Up to a bi-Lipschitz change of coordinates we can assume that $(X,\Gamma)$ from the previous lemma is $(\mathbb{R}^d,\mathbb{Z}^d)$ for a suitable $d$. Therefore, many results about Psuedo-Anosov linear maps of $\mathbb{R}^N$ from \cite{Hertz2005} can be directly transposed to our setting. Here we also exploit the fact that the following statements are invariant if one swaps $A$ with $A^k$, where $k$ comes from Lemma \ref{Pseudo Anosov subspace}.\newline

\begin{lemma}\label{Diphantine center}
    There exists a constant $c'=c'(A)>0$, such that, calling $r=\frac{\text{dim}X}2$,
    \begin{equation*}
        |n^c| \geq\frac {c'}{|n|^{r}} \quad\quad \text{for any }n\in\Lambda, n\neq0.
    \end{equation*}
\end{lemma}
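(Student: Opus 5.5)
The plan is the classical Liouville-type argument, run on the pseudo-Anosov block $(X,\Lambda)$ of Lemma \ref{Pseudo Anosov subspace}. Write $B=A^k|_X$ and $d=\dim X=2r$. By Remark \ref{change of basis} we may identify $(X,\Lambda)$ with $(\mathbb{R}^d,\mathbb{Z}^d)$ up to a bi-Lipschitz change, so $B$ becomes an integer matrix with irreducible characteristic polynomial $p$ of degree $d$. The roots of $p$ are simple. Because of the irreducibility and the lemma on reciprocal polynomials above, there is exactly one conjugate pair of unitary roots $\xi,\bar\xi$, and the remaining $d-2$ roots $\lambda_j$ have modulus different from $1$.

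Over $\mathbb{C}$ we decompose $n\in\Lambda$ along the eigenvectors of $B$. Let $\ell_\xi,\ell_{\bar\xi},\ell_{\lambda_j}$ be the dual left eigenvectors, chosen with coordinates in the number field $K=\mathbb{Q}(\xi,\lambda_j)$, which is possible since $p$ has integer coefficients. Then $n^c$ is determined by $(\ell_\xi(n),\ell_{\bar\xi}(n))$, so $|n^c|\asymp|\ell_\xi(n)|$, while the other components satisfy $|\ell_{\lambda_j}(n)|\le C|n|$.

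The key algebraic step is a norm argument. Take the left eigenvector $\ell_\xi$ with coordinates in $\mathbb{Z}[\xi]$; this can be arranged by clearing denominators, since the entries of $\mathrm{adj}(\xi I-B)$ are polynomials in $\xi$ with integer coefficients. The Galois conjugates of $\ell_\xi(n)$ under the embeddings of $\mathbb{Q}(\xi)$ are exactly the $\ell_\mu(n)$, for $\mu$ ranging over all roots of $p$. Hence
\[
  \mathcal{N}(n)=\prod_{\mu}\ell_\mu(n)
\]
is a rational integer. It is nonzero: $\ell_\mu(n)=0$ would place $n$ in a proper $B$-invariant subspace, contradicting the cyclicity of nonzero lattice vectors, which holds because $p$ is irreducible (Lemma \ref{cyclic iff irreducible}). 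Therefore $|\mathcal{N}(n)|\ge1$. The factors for $\xi$ and $\bar\xi$ equal $|\ell_\xi(n)|^2\asymp|n^c|^2$, and the other $d-2$ factors are each bounded by $C|n|$. This gives
\[
  1\le C|n^c|^2|n|^{d-2},
  \qquad\text{so}\qquad
  |n^c|\ge c'|n|^{-(r-1)}.
\]
This is stronger than the claimed exponent $r$, and since $|n|\ge1$ on $\Lambda\setminus\{0\}$ it implies $|n^c|\ge c'|n|^{-r}$.

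Finally, we transfer the constants from the coordinates on $X$ back to the norm $|\cdot|$ of $\mathbb{R}^N$ using the bi-Lipschitz identification, noting that $E^c\subset X$ and that the projection $n\mapsto n^c$ onto $E^c$ agrees with the one computed inside $X$.

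The step needing the most care is the integrality claim. We must check that the left eigenvectors can be chosen so that the conjugates of $\ell_\xi(n)$ are exactly the $\ell_\mu(n)$, all with algebraic-integer coordinates, so that the product is a nonzero integer. We handle this by defining $\ell_\mu$ uniformly as $\mu\mapsto$ a fixed row of $\mathrm{adj}(\mu I-B)$, a polynomial in $\mu$ with integer coefficients; if that row vanishes we pick another row, and some row is nonzero because $\mu$ is a simple root. A uniform definition of this kind is automatically compatible with the Galois action.
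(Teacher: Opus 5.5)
Your proof is correct, and it takes a different route from the paper. The paper gives no argument of its own. It imports Lemma A.10 of \cite{Hertz2005} with $\delta=1/2$, transported to $(X,\Lambda)$ through the bi-Lipschitz identification with $(\mathbb{R}^d,\mathbb{Z}^d)$ and the remark that the statement is unchanged when $A$ is replaced by $A^k$.

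You instead prove the estimate from scratch by the classical norm (Liouville-type) argument. Write $\ell_\mu(n)=Q_n(\mu)$, where $Q_n\in\mathbb{Z}[x]$ comes from one fixed row of the adjugate of $xI-B$, chosen nonzero at $\xi$ and hence nonzero at every conjugate. This makes $\prod_\mu\ell_\mu(n)=N_{\mathbb{Q}(\xi)/\mathbb{Q}}(Q_n(\xi))$ a nonzero rational integer. The identity $\ell_{\bar\xi}(n)=\overline{\ell_\xi(n)}$, together with $\ell_\xi(n)=\ell_\xi(n^c)$, turns the unitary pair into $|\ell_\xi(n)|^2\asymp|n^c|^2$.

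What your approach buys:
\begin{itemize}
\item It is self-contained.
\item It uses only the irreducibility of $p_{A^k|_X}$, not the pseudo-Anosov property for all powers.
\item It gives the sharper exponent $r-1$, of which the stated exponent $r$ is a weakening.
\end{itemize}
The weaker exponent is all the paper needs later. For instance, in the appendix $s=2r+1$ is chosen precisely so that $k|n^c|\gtrsim\varepsilon^{-1}$. The citation, for its part, buys brevity and keeps the constants in the form used in \cite{Hertz2005}.

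Two small points to tidy up. First, deduce that $p$ has exactly one unitary pair from $E^c\subset X$ and $\dim E^c=2$, since the unitary spectral subspace of $A^k|_X$ is $E^c$. Irreducibility plus the reciprocity lemma does not give this. Second, in the paper's adapted norm you only know $|n|\ge c_0>0$ on $\Lambda\setminus\{0\}$ by discreteness, not $|n|\ge 1$. That is still enough after adjusting $c'$.
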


\begin{proof}
    This is lemma A.10 in \cite{Hertz2005} with $\delta=1/2$.\newline
\end{proof}

We now fix a $\mathbb{Z}$-basis of $\Lambda$, namely $\{e_1,\dots, e_{\dim(X)}\}$.\newline
We also consider the linear trasformation $R:E^c\rightarrow \mathbb{R}^2$ defined by $R(e_1^c)=(1,0)$ and $R(e_2^c)=(0,1)$. The next two lemmas are Lemma A.11 and Lemma A.12 in \cite{Hertz2005}.

\begin{lemma}\label{Diphantine condition 1}
    If $\dim(X)\geq6$ there exists $n\in\Lambda$ such that if we call $R(n^c)=\alpha$, then for any $\delta>0$, there exists a constant $c'>0$, such that
    \begin{equation*}
        ||| k\alpha||| \geq \frac c{k^{2+\delta}}, \quad\quad \forall k\in\mathbb{Z},k\neq0.
    \end{equation*}
\end{lemma}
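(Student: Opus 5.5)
We reduce the statement to a Diophantine property of algebraic numbers of bounded degree, using the fact that $A^k|_X$ is Pseudo-Anosov on $(X,\Lambda)$ with $\dim X=d\geq 6$.

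\emph{Step 1: choice of $n$.} Write $\lambda=e^{2\pi i\theta}$ for the unitary eigenvalue of $A^k|_X$. Pick $n\in\Lambda$ with $n^c\neq 0$; Lemma \ref{Diphantine center} shows any nonzero $n$ works. Write $n^c=a e_1^c+b e_2^c$. Then $\alpha=R(n^c)=(a,b)$.

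\emph{Step 2: algebraicity.} Identify $E^c$ with $\mathbb{C}$ so that $A^k|_{E^c}$ is multiplication by $\lambda$. The map $v\mapsto v^c$ is the projection along the other generalized eigenspaces. It is defined over the number field $K=\mathbb{Q}(\lambda)$, whose degree is $d$ because $p_{A^k|_X}$ is irreducible. Hence the coordinates of $e_j^c$, and therefore $a$ and $b$, lie in the real subfield of the Galois closure of $K$. In fact $a,b\in\mathbb{Q}(\lambda,\bar\lambda)\cap\mathbb{R}$, and this field has degree at most $d$ over $\mathbb{Q}$.

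\emph{Step 3: irrationality.} Show that $1,a,b$ are not all rational multiples of one another, so that $\alpha$ is genuinely irrational. Otherwise some nonzero $m\in\Lambda$ would satisfy $m^c=0$, contradicting Lemma \ref{Diphantine center}. Here $\dim X\geq 6$ is what leaves room to choose $n$ so that $a$ (say) is an irrational algebraic number of degree at least $2$. This step also uses the cyclicity of every lattice vector under $A^k$.

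\emph{Step 4: the lower bound.} Apply Roth's theorem to a single irrational algebraic coordinate, say $a$:
\[
|||ka||| \geq \frac{c}{k^{1+\delta}} \quad\text{for all } k\neq 0 .
\]
Then use $|||k\alpha|||\geq |||ka|||$. This gives an exponent better than $2+\delta$, so the stated bound follows a fortiori. If Roth is considered too heavy, use Liouville's theorem instead: it gives $|||ka|||\geq c/k^{\deg a-1}$, which is enough only when $\deg a\leq 3$. Roth, or Schmidt's subspace theorem for the pair $(a,b)$, removes this restriction and yields the $2+\delta$ exponent uniformly.

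\emph{Main obstacle.} The hardest step is Step 3: proving that some coordinate of $R(n^c)$ is irrational and algebraic. This requires $R$ to be defined consistently with the basis $\{e_j\}$ and $e_1^c,e_2^c$ to be linearly independent. I would first show that $e_1^c,e_2^c$ span $E^c$ for a suitable ordering of the basis, using Lemma \ref{Diphantine center}. I would then pick $n$ among $e_3,\dots,e_d$, which is possible because $d\geq 6$ gives several candidates. If a candidate had both coordinates rational, a suitable integer combination would give a nonzero lattice vector with zero center component, a contradiction. This is exactly Lemma A.11 of \cite{Hertz2005} transported through Remark \ref{change of basis}, and in the paper's setting one may simply invoke it.
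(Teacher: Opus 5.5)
\textbf{There is a gap: your argument proves only a weak reading of the statement, not the Diophantine condition the paper relies on.}

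Read literally ($k\in\mathbb{Z}$, simultaneous approximation), your Roth argument does give the printed inequality, once $n$ is taken outside $\mathrm{span}(e_1,e_2)$. Your Step~1 claim that any nonzero $n$ works is false: $n=e_1$ gives $\alpha=(1,0)$.

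However, your argument never uses $\dim X\geq 6$. Your remark that this hypothesis is what makes an irrational coordinate available is also wrong. For $\dim X=4$, $n=e_3$ already has an irrational coordinate, in fact a quadratic irrational, so even $|||k\alpha|||\geq c/|k|$ holds there. This contradicts the paper's explicit statement that the lemma fails in dimension~4.

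The printed statement is almost certainly a misprint for the linear-form condition of Lemma~A.11 in \cite{Hertz2005}. That is the lemma the paper's proof invokes (it is a bare citation, transported to $(X,\Lambda)$), it is what the KAM step needs, and it matches the form of Lemma~\ref{Diphantine condition 2}:
\[
|||k\cdot\alpha|||\geq \frac{c}{|k|^{2+\delta}}\qquad \text{for all } k\in\mathbb{Z}^2\setminus\{0\}.
\]
For this version your Step~4 gives nothing, since a lower bound on $|||k_1a|||$ says nothing about $|||k_1a+k_2b|||$. So your closing claim that your argument is exactly Lemma~A.11 does not hold.

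Three ingredients are missing.

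\emph{The sharper field.} Let $p=p_{A^k|_X}$, $K_0=\mathbb{Q}(\lambda+\bar\lambda)$, and $q(x)=x^2-(\lambda+\bar\lambda)x+1$. By Bezout in $K_0[x]$, the center projection on $X$ equals $u(A^k)\,(p/q)(A^k)$ for some $u\in K_0[x]$. Hence $a,b\in K_0$, and $[K_0:\mathbb{Q}]=\dim X/2$, not merely at most $d$. When $\dim X=4$ this forces $1,a,b$ to be $\mathbb{Q}$-dependent for every $n$. That gives an exact resonance $k\cdot\alpha\in\mathbb{Z}$ with $k\neq 0$, which is precisely why the lemma fails there.

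\emph{The choice of $n$.} For $\dim X\geq 6$ you must choose $n$ with $1,a,b$ linearly independent over $\mathbb{Q}$. The $\mathbb{Q}$-linear map $n\mapsto(a(n),b(n))$ from $\Lambda\otimes\mathbb{Q}$ to $K_0^2$ is injective by Lemma~\ref{Diphantine center}. Both spaces have $\mathbb{Q}$-dimension $\dim X$, so the map is onto. You can therefore realize $(a,b)=(\theta,\theta^2)$ for a generator $\theta$ of $K_0$ and then clear denominators.

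\emph{The exponent.} The bound with exponent $2+\delta$ then follows from Schmidt's subspace theorem for a linear form in two real algebraic numbers with $1,a,b$ independent over $\mathbb{Q}$. The finitely many exceptions are absorbed into $c$. Roth applied to a single coordinate cannot give this.
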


Unfortunately this lemma is not true in dimension 4, hence, in that case, we will have to rely on the following alternative one.

\begin{lemma}\label{Diphantine condition 2}
    If $\dim(X)=4$ there exist $n_1,n_2\in\Lambda$ such that if we call $R(n_1^c)=\alpha_1$ and $R(n_2^c)=\alpha_2$, then there exists a constant $c'>0$, such that
    \begin{equation*}
        \max_{i=1,2}|||k\cdot\alpha_i||| \geq \frac c{k^2}, \quad\quad \forall k\in\mathbb{Z}^2,k\neq0.
    \end{equation*}
\end{lemma}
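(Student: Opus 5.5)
The statement is the analogue of Lemma A.12 in \cite{Hertz2005}, and the plan is to transport it via the reduction already set up. After the bi-Lipschitz change of coordinates of Remark \ref{change of basis}, we identify $(X,\Lambda)$ with $(\mathbb{R}^4,\mathbb{Z}^4)$. By Lemma \ref{Pseudo Anosov subspace}, $B:=A^k|_X$ is a Pseudo-Anosov element of $SL(4,\mathbb{Z})$ with $E^c\subset X$ two dimensional. The characteristic polynomial of $B$ is then an irreducible reciprocal quartic with roots $\lambda,\lambda^{-1},\xi,\bar\xi$, where $|\xi|=1$ and $\lambda$ is real (a Salem-type polynomial). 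We take $n_1=e_3$ and $n_2=e_4$, the basis vectors complementary to $e_1,e_2$. We will check that $e_1^c,e_2^c$ are linearly independent, since otherwise some nonzero integer combination would be $B$-non-cyclic, contradicting irreducibility. Hence $R$ is well defined, and $\alpha_i=R(e_{i+2}^c)$ are the coordinates of $e_{i+2}^c$ in the basis $e_1^c,e_2^c$.

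The key step is to translate small values of $|||k\cdot\alpha_i|||$ into a small center component of an integer vector. Given $k\in\mathbb{Z}^2\setminus\{0\}$, let $\ell\in\mathbb{R}^2$ be the covector on $E^c$ defined by $k\cdot R(\cdot)$. Let $m_i\in\mathbb{Z}$ be the nearest integer to $k\cdot\alpha_i$, so that $|k\cdot\alpha_i-m_i|=|||k\cdot\alpha_i|||=:\varepsilon_i$. Consider the linear functional $\phi(v)=\ell(v^c)$ on $X$. On the lattice it takes the values $\phi(e_1)=k_1$, $\phi(e_2)=k_2$, and $\phi(e_{i+2})=m_i+O(\varepsilon_i)$. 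So $\phi$ is within $\max\varepsilon_i$ of the integer covector $\psi=(k_1,k_2,m_1,m_2)$, with $|\psi|\lesssim|k|$ because the $\alpha_i$ are fixed. Now $\phi$ annihilates $E^{su}$, while $\psi$ is a nonzero integer covector. We apply the dual of Lemma \ref{Diphantine center}, i.e.\ the same Liouville-type estimate for the transpose $B^T$, which is again Pseudo-Anosov with the same characteristic polynomial. This gives that the restriction of $\psi$ to $E^{su}$ has norm at least $c'/|\psi|^{r}$ with $r=2$. On the other hand, this restriction equals that of $\psi-\phi$, which has norm $O(\max_i\varepsilon_i)$. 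Therefore $\max_i\varepsilon_i\gtrsim |k|^{-2}$.

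The main obstacle is justifying the dual Diophantine bound $|\psi|_{E^{su}}|\ge c'|\psi|^{-2}$ for integer covectors. I would prove it as in Lemma A.10 of \cite{Hertz2005}. The product of the norms of $\psi$ restricted to the four (complex) eigendirections is a nonzero algebraic integer norm, hence at least $1$. This uses irreducibility, so that $\psi$ vanishes on no eigenvector. The two center factors are each bounded by $C|\psi|$. Hence the product of the stable and unstable factors is at least $c|\psi|^{-2}$, and each of these factors is also $\le C|\psi|$. This yields a bound of the form $|\psi|^{-3}$ on the smaller factor, which is too weak. To reach exponent $2$ I will instead apply the $A$-invariance trick. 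Replace $\psi$ by $\psi\circ B^j$ with $j$ chosen so that the stable and unstable factors are balanced. The center factor is preserved under this change (isometry on $E^c$), and the integer norm stays bounded by $\approx|\psi|$. Balancing then gives the exponent $r=\dim X/2=2$. Finally, I must make sure $\psi\neq0$, which holds since $k\neq0$, and that $\max\varepsilon_i<1/2$ is the only case needing treatment, which is trivial.
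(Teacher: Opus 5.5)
Your transference reduction is correct, and it is self-contained where the paper simply imports Lemma A.12 of \cite{Hertz2005}. You encode $k$ as the covector $\phi=k\cdot R((\cdot)^c)$, which vanishes on $E^{su}\cap X$, and round it to the integer covector $\psi=(k_1,k_2,m_1,m_2)$. This gives $\|\psi|_{E^{su}\cap X}\|\le C\max_i|||k\cdot\alpha_i|||$ and $|\psi|\le C|k|$, so everything reduces to a lower bound for $\|\psi|_{E^{su}\cap X}\|$ over nonzero integer covectors. Two steps, however, are wrong as written.

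\textbf{Independence of $e_1^c,e_2^c$.} These vectors need not be independent. A dependence $ae_1+be_2\in E^{su}$ has real coefficients, so cyclicity of integer vectors says nothing about it. Here is a concrete failure. $C=B+B^{-1}$ is an integer matrix acting as the scalar $\lambda+\lambda^{-1}$ on $E^{su}\cap X$ and as $2\,\mathrm{Re}\,\xi$ on $E^c$. For $0\neq y\in\Lambda$, both $y^{su}$ and $y^c$ are nonzero by cyclicity, so the rational plane spanned by $y$ and $Cy$ equals $\mathbb{R}y^{su}\oplus\mathbb{R}y^c$. If $e_1,e_2$ is a basis of its intersection with $\Lambda$, extended to a basis of $\Lambda$, then $e_1^c,e_2^c$ are collinear and $R$ is undefined. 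Independence must be arranged by the choice (e.g.\ reordering) of the basis, as the paper tacitly does when defining $R$; it cannot be derived.

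\textbf{The key estimate.} It is not ``the dual of Lemma \ref{Diphantine center}''. That lemma for $B^T$ bounds from below the component of $\psi$ in $\mathrm{Ann}(E^{su}\cap X)$, i.e.\ essentially $\psi|_{E^c}$, which here is close to $\ell$ and not small. You need the complementary fact: integer covectors stay away from $\mathrm{Ann}(E^{su}\cap X)$.

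Your norm computation does give this, but you drew the wrong conclusion from it. $\|\psi|_{E^{su}\cap X}\|$ is comparable to the \emph{larger} of $|\psi(u_\lambda)|$ and $|\psi(u_{\lambda^{-1}})|$, not the smaller. Choose $u_\lambda$ with entries in $\mathbb{Z}[\lambda]$ and take the $u_\mu$ to be its Galois conjugates, so that the product of the four factors is a nonzero rational integer. Then $|\psi(u_\lambda)\psi(u_{\lambda^{-1}})|\ge 1/|\psi(u_\xi)|^2\ge c|\psi|^{-2}$ immediately yields $\|\psi|_{E^{su}\cap X}\|\ge c|\psi|^{-1}$. This gives $\max_i|||k\cdot\alpha_i|||\ge c|k|^{-1}$, which is stronger than the statement.

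The balancing via $\psi\circ B^j$ should be dropped. As written it produces a bound for a different covector, no longer close to $\phi$, and says nothing about $\psi$ itself.
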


\section{Proof of the Minimality Criterion}\label{Proof of the Minimality Criterion}

We focus on a connected component $U$ of $V$, with the goal of showing that $U=\mathbb{R}^N$.

\subsection{The structure of U}\label{the structure of U}

Our first goal is to prove that $U$ is invariant by a discrete subgroup of directions that spans $E^c$ and that $U$ is simply connected. Here we follow ideas from \cite{Hertz2005}.\newline

\noi Given $\varepsilon>0$ we set $L(\varepsilon)=\varepsilon^{-2}$, then define $W_{\varepsilon}(x)=W^s_{\varepsilon}(W^u_{L(\varepsilon)+\varepsilon}(W^s_{L(\varepsilon)}(W^c_{\varepsilon}(x))))$.

\begin{lemma}\label{infinite volume}
    Provided that $f$ is sufficiently $\mathcal{C}^1$-close to $A$ the following holds.\newline
    For every $x\in\mathbb{R}^N$, we have that $\text{Vol}(W_{\varepsilon}(x)\cap(X+x))\rightarrow +\infty$ when $\varepsilon\rightarrow0$.
\end{lemma}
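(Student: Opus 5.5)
We need to show that $\mathrm{Vol}(W_\varepsilon(x)\cap(X+x))$ tends to infinity. Here the volume is the $\dim X$-dimensional Lebesgue measure on the affine subspace $X+x$. Our plan is to compare $W_\varepsilon(x)$ with the linear model
\[
P_\varepsilon(x)=x+B_{E^s}(0,\varepsilon)+B_{E^u}(0,L+\varepsilon)+B_{E^s}(0,L)+B_{E^c}(0,\varepsilon),\qquad L=L(\varepsilon)=\varepsilon^{-2}.
\]
This is essentially a product box in $E^c\oplus E^s\oplus E^u$. Its side lengths are roughly $\varepsilon$ in the center and $L$ in the stable and unstable directions. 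Since $E^c\subset X$ and $X$ is $A^k$-invariant, $X=E^c\oplus(X\cap E^s)\oplus(X\cap E^u)$. The pseudo-Anosov property of $A^k|_X$, together with the previous lemmas, gives that $X\cap E^s$ and $X\cap E^u$ are both nontrivial, of dimensions $a,b\ge1$ with $a+b+2=\dim X$. Hence the slice of the model box with $X+x$ has volume of order $\varepsilon^2L^{a+b}=\varepsilon^{2-2(a+b)}$. Since $a+b\ge 2$, this tends to $+\infty$. The whole point is to show that the nonlinear set contains a comparable slice.

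\textbf{Step 1 (center direction).} $W^c_\varepsilon(x)$ is a $(1+\kappa)$-biLipschitz graph over $B_{E^c}(0,\varepsilon)$ by Lemma \ref{invariant manifolds are Lip graphs}. We only use it to move inside $x+B(0,2\varepsilon)$.

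\textbf{Step 2 (stable and unstable directions).} For a point $y$ we consider the set $W^u_{L+\varepsilon}(W^s_L(y))$. Arguing as in Lemma \ref{proper 1} and Lemma \ref{homomorphism}, applied to the map $(v^s,v^u)\mapsto\sigma^u(v^u,\sigma^s(v^s,y))$, we get a topological embedding of $B_{E^s}(0,L)\times B_{E^u}(0,L)$. It is $\kappa|v|$-close to $y+v^s+v^u$. Adding the initial center displacement and the final $W^s_\varepsilon$ leg, the full map
\[
(v^c,v^s,v^u,w^s)\mapsto \sigma^s\bigl(w^s,\sigma^u(v^u,\sigma^s(v^s,\sigma^c(v^c,x)))\bigr)
\]
covers a region of the form $\Phi_x$-image with errors of size $\kappa(L+\varepsilon)$ in the $su$ directions.

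The genuine issue is the center coordinate. The error $\kappa L$ coming from the long stable and unstable legs is much larger than $\varepsilon$, since $L=\varepsilon^{-2}$. So we cannot hope that the center coordinate stays in an $\varepsilon$-ball around $x^c$. Instead we count volume fibrewise. For fixed $(v^s,v^u)$, let $z=\sigma^u(v^u,\sigma^s(v^s,x))$. The set $W^s_\varepsilon(W^u(W^s(W^c_\varepsilon(x))))$ near $z$ contains the image of $W^c_\varepsilon(x)$ under the holonomy $h^{\gamma}$ along the $su$-path $\gamma$. This path has at most 3 legs and length at most $3L$. The final short stable leg of length $\varepsilon$, combined with the center disc, lets us fill a full neighbourhood.

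The key estimate needed is a \emph{lower} bound on the size of $h^\gamma(W^c_\varepsilon(x))$. Applying Lemma \ref{Holonomies Lip} to the inverse path gives $\mathrm{Lip}((h^\gamma)^{-1})\le C^3L^{3\beta}$. Hence $h^\gamma(W^c_\varepsilon(x))$ contains a center disc of radius at least $\varepsilon C^{-3}L^{-3\beta}$.

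\textbf{Step 3 (intersect with $X+x$ and integrate).} Parametrize the slice by $v^s\in X\cap E^s$ and $v^u\in X\cap E^u$ with $|v^s|,|v^u|\le L/2$, plus a center disc of radius $r$. By a degree argument as in Lemma \ref{proper 1}, using the fact that the errors are transverse-small relative to $X$, the set $W_\varepsilon(x)$ contains, over a set of $(v^s,v^u)$ of measure at least $cL^{a+b}$, a center disc of radius $r$. Here the final $W^s_\varepsilon$ leg is used to absorb the components of the error transverse to $X$. This yields
\[
\mathrm{Vol}\bigl(W_\varepsilon(x)\cap(X+x)\bigr)\ \gtrsim\ L^{a+b}\,\varepsilon^{2}L^{-6\beta}=\varepsilon^{2-2(a+b)+12\beta}.
\]
This tends to $\infty$ once $\beta<1/6$, which holds for $f$ sufficiently $\mathcal{C}^1$-close to $A$.

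\textbf{Main obstacle.} We expect Step 3 to be the hardest part. The difficulty is making the degree argument work inside the affine slice $X+x$, when the sets $W^u$ and $W^s$ of $f$ are not contained in $X+x$: their deviations of size $\kappa L$ leave $X$ in the directions of $E^s\ominus X$ and $E^u\ominus X$. Our first attempt would be to avoid working inside $X$. Instead, we would show that $W_\varepsilon(x)$ contains a set homeomorphic to a full-dimensional box of size $\varepsilon'$ times $L$, thickened by the short final stable leg. We would then apply Fubini only after verifying that the projection along $X^{\perp}$-type directions is a homotopy-perturbation of the linear one, so that the degree is $1$ on each slice. If this fails, the fallback is to replace $X$ by a slightly tilted family of slices and average.
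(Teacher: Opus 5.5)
Your heuristic count is right: a factor $L^{\dim X-2}$ from the $su$-directions inside $X$, times the area of a center disc shrunk by the holonomy Lipschitz bound. Step 2 matches what the paper does.

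\textbf{The gap is Step 3.} That step is the whole content of the lemma, and as written it fails.

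If you restrict to $v^s\in X\cap E^s$ and $v^u\in X\cap E^u$, the points $\sigma^u(v^u,\sigma^s(v^s,x))$ leave $X+x$. The displacement can only be bounded by $\kappa L=\kappa\varepsilon^{-2}$, and it has components in both the $E^s$ and the $E^u$ directions transverse to $X$. These errors are not ``transverse-small'' relative to anything you can correct with:
\begin{itemize}
\item the only slack in $W_\varepsilon(x)$ is the final $W^s_\varepsilon$ leg and the extra $\varepsilon$ in $W^u_{L+\varepsilon}$, both of size $\varepsilon\ll\kappa\varepsilon^{-2}$;
\item a stable leg cannot undo an $E^u$ displacement at all.
\end{itemize}
So the degree argument inside the slice does not go through. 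The fibrewise count has a second problem: the discs $W^c_r(z)$ lie in curved center leaves, not in $X+x$. Counting them over a set of $(v^s,v^u)$ therefore does not measure $\dim X$-volume in the slice. Your ``Main obstacle'' paragraph (Fubini after a homotopy check, or averaging over tilted slices) is a plan, not an argument.

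\textbf{The missing idea.} You never need to work inside the slice.
\begin{itemize}
\item Let $(v^s,v^u)$ range over all of $E^s\times E^u$ and use Lemma \ref{proper 2}. It is a full-dimensional surjectivity statement: for any target $p$ with $|p-x|<L/2$, the affine plane $E^c(p)$ meets $W^u_L(W^s_L(x))$.
\item Because $E^c\subset X$, choosing $p\in X+x$ forces $E^c(p)\subset X+x$. So the intersection point $y$ lies \emph{exactly} in $X+x$. The transverse errors are absorbed by the freedom in the full $su$-parameters, not by the short leg.
\item Your holonomy bound then gives $W^c_\delta(y)\subset W^u_L(W^s_L(W^c_\varepsilon(x)))$ with $\delta\sim\varepsilon L^{-2\beta}$.
\item The remaining short legs together with Lemma \ref{proper 1} upgrade this to a full $N$-dimensional ball of radius $\sim\delta$ around $y$ inside $W_\varepsilon(x)$. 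Its slice by $X+x$ is a $\dim X$-ball of the same radius, because its center lies in $X+x$.
\item Take targets $p_i\in X+x$ whose images in $X/E^c$ are $2\delta$-separated in a ball of radius $L/2$; there are about $(L/\delta)^{\dim X-2}$ of them. The corresponding balls are disjoint.
\item The total volume is $\gtrsim\delta^2L^{\dim X-2}=\varepsilon^{2+8\beta-2(\dim X-2)}$. Since $\dim X\ge 4$, this tends to $+\infty$ for $\beta<1/4$.
\end{itemize}
This is the paper's proof. It needs no degree argument in the slice and no Fubini.
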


\begin{proof}
    The space $X/E^c$ inherits naturally a distance from $X$, and so does $X/E^c+x$. We set $2R=L(\varepsilon)$ and consider the ball $B=B(x,R)$ in $X/E^c+x$. Setting $2\delta=\varepsilon L(\varepsilon)^{-2\beta}$, we can find a $2\delta$-separated subset $\{x_1,\dots,x_N\}$ of $B$, where $N\geq C (R/\delta)^{\text{dim}(X)-2}$, where $C$ depends only on $\text{dim}(X)$.\newline
    By lemma \ref{proper 2}, there exist points $y_1,\dots,y_N$, such that, for every $i$, $y_i\in E^c(x_i)$ and there exists a 2-pieces $su$-curve $\gamma_i$, that joins $x_i$ and $y_i$ and satisfies $\ell(\gamma_i)<L(\varepsilon)$.\newline
    We notice that $\{y_1,\dots, y_N\}$ is a $2\delta$-separated set in $X$.\newline
    In addition, we have that $y_i \in W^u_{L(\varepsilon)}(W^s_{L(\varepsilon)}(x))$. Lemma \ref{Holonomies Lip} implies that $W^c_{\delta}(y_i)\subset W^u_{L(\varepsilon)}(W^s_{L(\varepsilon)}(W^c_{\varepsilon}(x)))$, therefore $W^u_{\delta}(W^s_{\delta}(W^c_{\delta}(y_i)))\subset W_{\varepsilon}(x)$. Now, by lemma \ref{proper 1}, we get that $B(y_i,\delta)\subset W_{\varepsilon}(x)$. Since the balls $B(y_i,\delta)$ are disjoint we get that
    \begin{align*}
        \text{Vol}(W_{\varepsilon}(x)\cap(X+x)) &\geq \sum_{i=1}^N \text{Vol}(B_{X+x}(y_i,\delta)\\
        & = C \delta^{\text{dim}(X)}\frac{L(\varepsilon)^{\text{dim}(X)-2}}{\delta^{\text{dim}(X)-2}}\\
        &= C\varepsilon^{2-2(\text{dim}(X)-2-4\beta)}.
    \end{align*}
    If $\beta<1/4$ (i.e. $f$ is sufficinetly close to $A$), the exponent is negative and this concludes the proof.

\end{proof}

\begin{lemma}\label{n epsilon}
    For every $\varepsilon>0$ small enough and $x\in\mathbb{R}^N$, there exists $n=n_{\varepsilon}\in \Lambda\subset X$ such that $W_{\varepsilon}(x)\cap(W_{\varepsilon}(x)+n_{\varepsilon})\neq\emptyset$. Moreover, if $B(x,\varepsilon)\subset U$, then  $U+n_{\varepsilon}=U$.\newline
    Finally, $|n_{\varepsilon}|\leq 5(1+\kappa) L(\varepsilon)$
\end{lemma}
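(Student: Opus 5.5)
Use a pigeonhole/volume argument on the torus $X/\Lambda$ for existence, an $su$-path argument for the invariance, and a diameter estimate for the bound on $n_\varepsilon$.

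\emph{Existence of $n_\varepsilon$.} Consider the quotient of $X+x$ by $\Lambda$. Since $\Lambda$ has full rank in $X$ (Lemma \ref{Pseudo Anosov subspace}), this quotient is a compact torus with finite volume $\text{Vol}(X/\Lambda)$. By Lemma \ref{infinite volume}, for $\varepsilon$ small enough we have $\text{Vol}(W_\varepsilon(x)\cap(X+x))>\text{Vol}(X/\Lambda)$. The projection of $W_\varepsilon(x)\cap(X+x)$ to $(X+x)/\Lambda$ therefore cannot be injective. So there are two distinct points $p,q\in W_\varepsilon(x)\cap(X+x)$ with $p-q=n\in\Lambda\setminus\{0\}$. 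This means $p\in W_\varepsilon(x)\cap(W_\varepsilon(x)+n)$.

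\emph{Invariance of $U$.} Assume $B(x,\varepsilon)\subset U$. Then $W^c_\varepsilon(x)\subset B(x,(1+\kappa)\varepsilon)$; shrinking the constants slightly if needed, $W^c_\varepsilon(x)\subset U$. Every point of $W_\varepsilon(x)$ is joined to a point of $W^c_\varepsilon(x)$ by an $su$-path. Since $V=\pi^{-1}(E)$ is $su$-saturated for $F$ (leaves project to leaves), and leaves are connected, we get $W_\varepsilon(x)\subset U$. The set $V$ is $\mathbb{Z}^N$-invariant, so $V+n=V$. Translation by $n$ permutes the connected components of $V$, hence $U+n$ is a component of $V$. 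It contains $W_\varepsilon(x)+n$, which meets $W_\varepsilon(x)\subset U$. Two components that intersect coincide, so $U+n=U$.

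\emph{Bound on $|n_\varepsilon|$.} By Lemma \ref{invariant manifolds are Lip graphs}, each leg of the path defining $W_\varepsilon(x)$ moves distance at most $(1+\kappa)$ times its parameter length. Hence $W_\varepsilon(x)\subset B(x,(1+\kappa)(2L(\varepsilon)+3\varepsilon))$. Then $|n|=|p-q|\le 2(1+\kappa)(2L+3\varepsilon)\le 5(1+\kappa)L(\varepsilon)$ for $\varepsilon$ small, because $L(\varepsilon)=\varepsilon^{-2}$ dominates $\varepsilon$.

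The main subtlety is the volume pigeonhole: we need the measure of the set taken inside the affine slice $X+x$, and we need $n$ to lie in $\Lambda\subset X$ rather than only in $\mathbb{Z}^N$. Lemma \ref{infinite volume} is stated precisely on this slice, so the argument is fine, provided the volume on $X+x$ and on the fundamental domain are computed with the same normalization.
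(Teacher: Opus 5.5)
Your proposal is correct and follows the same route as the paper. Both use the volume pigeonhole in $(X+x)/\Lambda$ via Lemma \ref{infinite volume}, and you usefully note that non-injectivity gives $n\neq 0$. Both then observe that $U+n$ is a component of $V$ meeting $U$, and bound $|n_\varepsilon|\le\text{diam}(W_\varepsilon(x))$.

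You are more explicit than the paper in justifying $W_\varepsilon(x)\subset U$ through the $su$-saturation of $U$. The mismatch you flag between $W^c_\varepsilon(x)\subset B(x,(1+\kappa)\varepsilon)$ and the hypothesis $B(x,\varepsilon)\subset U$ is also left implicit in the paper. It is better handled by assuming $B(x,(1+\kappa)\varepsilon)\subset U$ than by shrinking $\varepsilon$, which would change $n_\varepsilon$ and its bound.
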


\begin{proof}
    Consider $\varepsilon$ small enough to ensure that $\text{Vol}(W_{\varepsilon}(x)\cap(X+x))>\text{Vol}(X/\Lambda)$, which is finite since $\Lambda$ has full rank. We now consider the projection $\pi:(X+x)\rightarrow X/\Lambda$. If the fist part of the statement did not hold,we would then have that $\pi|_{W_{\varepsilon}(x)\cap(X+x)}$ is injective, but this would contraddict the fact that $\pi$ is a local isometry and that $\text{Vol}(W_{\varepsilon}(x)\cap(X+x))>\text{Vol}(X/\Lambda)$.\newline
    Now we notice that $U+n_{\varepsilon}$ is a connected component of $V$ (since $V$ is $\mathbb{Z}^n$ invariant) and that the first part ensures that $U+n_{\varepsilon}\cap U\neq \emptyset$, giving that $U+n_{\varepsilon}=U$.\newline
    Finally we must have $|n_{\varepsilon}|\leq \text{diam}(W_{\varepsilon}(x))$, hence the last statement.
\end{proof}

\begin{lemma}
    $U+\Gamma = U$ for a suitable $\Gamma\subset\Lambda$, full rank subgroup of $X$.
\end{lemma}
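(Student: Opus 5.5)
The plan is to start from a single nonzero translation $n_0\in\Lambda$ with $U+n_0=U$ and spread it to a full-rank subgroup using the dynamics of $A^k$ and the fact that $A^k$ is Pseudo-Anosov on $(X,\Lambda)$ (Lemma \ref{Pseudo Anosov subspace}). The vector $n_0$ comes from Lemma \ref{n epsilon}. Since $U$ is open and nonempty, we pick $x$ and $\varepsilon$ small with $B(x,\varepsilon)\subset U$, and we get $n_\varepsilon$ with $U+n_\varepsilon=U$. We must make sure $n_\varepsilon\neq 0$. If the intersection $W_\varepsilon(x)\cap(W_\varepsilon(x)+n)$ only produced $n=0$, the projection argument of Lemma \ref{n epsilon} would fail, because there the volume exceeds $\text{Vol}(X/\Lambda)$ and the projection cannot be injective. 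So the two points identified by the projection differ by a nonzero element of $\Lambda$, and $n_0:=n_\varepsilon\neq0$.

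The key step is to show that if $U+n=U$ then $U+A^k n=U$, or at least that the stabilizer $\text{Stab}(U)=\{m\in\mathbb{Z}^N: U+m=U\}$ contains $A^{kj}n_0$ for $j=0,\dots,\dim X-1$. The stabilizer is a subgroup, since $V$ is $\mathbb{Z}^N$-invariant and $U$ is a connected component, so any $m$ with $(U+m)\cap U\ne\emptyset$ lies in it. We use $f$-invariance of $E$: $F(V)=V$, and $F$ permutes the components of $V$. Since $F(x+m)=F(x)+Am$ for $m\in\mathbb{Z}^N$, the component $F^k(U)$ is invariant under $A^k n_0$. The difficulty is that $F^k(U)$ need not equal $U$.

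To handle this, we compare $F^{k}(U)$ with $U$ through the fixed point $0$, after choosing the component that contains points near $0$ when possible. If that is not possible, we argue more robustly. We repeat the Lemma \ref{n epsilon} argument for the component $U_j=F^{kj}(U)$, which gives translations $A^{kj}n_0\in\text{Stab}(U_j)$. Then we note that $\text{Stab}(U_j)=A^{kj}\text{Stab}(U)$. Hence $\text{Stab}(U)\cap\Lambda$ is a subgroup $\Gamma_U$, and the family $\{\Gamma_{U'}\}$ over components $U'$ is permuted by $A^k$.

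The intended route to full rank is the following. Let $Y=\text{span}_{\mathbb{R}}(\text{Stab}(U)\cap\Lambda)$. This is a rational subspace of $X$ and it is nonzero because it contains $n_0$. If $Y$ were $A^{kl}$-invariant for some $l\ge1$, then Pseudo-Anosovity would force $Y=X$: the nonzero integer vector $n_0\in Y$ is $A^{kl}$-cyclic, so its iterates span $X$. The rank of $\Gamma=\text{Stab}(U)\cap\Lambda$ would then be $\dim X$, which is the claim.

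To obtain $A^{kl}$-invariance, we use that there are only finitely many rational subspaces of bounded height arising this way. The subspaces $Y_{U'}$ for the components $U'=F^{kj}(U)$ satisfy $Y_{F^k U'}=A^kY_{U'}$. The stabilizer is also unchanged under $\mathbb{Z}^N$-translation of the component, since $\mathbb{Z}^N$ is abelian. So it suffices to show that there are finitely many components of $V$ up to $\mathbb{Z}^N$-translation that contain a ball of a fixed radius $\varepsilon_0$. Then some $F^{kl}(U)=U+m$, and this gives $A^{kl}Y=Y$.

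This finiteness, together with a uniform lower bound on the size of balls contained in each $F^{kj}(U)$, is the expected main obstacle. The plan is to bypass it. We apply Lemma \ref{n epsilon} at an arbitrary point $x$ of $U$ with a fixed small $\varepsilon$, which is allowed because the lemma's first part holds for every $x\in\mathbb{R}^N$. The $su$-saturation of $U$ together with Lemma \ref{proper 2} ensures that $U$ contains $E^c$-thin but $su$-large sets. Alternatively, we prove directly that the rational subspaces $Y_{U'}$ range over a finite set, using the bound $|n_\varepsilon|\le 5(1+\kappa)L(\varepsilon)$, so that each $Y_{U'}$ contains a nonzero lattice vector of bounded norm. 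Then the pigeonhole principle gives some $l$ with $A^{kl}Y=Y$, up to the invariance issues just described.
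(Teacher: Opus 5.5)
Your reduction is correct as far as it goes. You have $F^{j}(U+m)=F^{j}(U)+A^{j}m$, hence $\text{Stab}(F^{j}U)=A^{j}\,\text{Stab}(U)$, and stabilizers are unchanged under $\mathbb{Z}^N$-translates. So the lemma follows as soon as some iterate returns, i.e. $F^{l}(U)=U+h$ for some $l\geq 1$ and $h\in\mathbb{Z}^N$. Your check that $n_\varepsilon\neq 0$ is also a good point.

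That return, however, is exactly the step you leave unproved, and neither of your bypasses supplies it. Applying Lemma \ref{n epsilon} to $F^{kj}(U)$ needs a center disc (hence a ball) of radius $\varepsilon$ inside $F^{kj}(U)$ with $\varepsilon$ independent of $j$, and nothing you have proved gives that uniformity. The finiteness of the subspaces $Y_{U'}$ via short lattice vectors also does not follow. Knowing that $Y_{U'}$ contains some nonzero $m\in\Lambda$ with $|m|\leq 5(1+\kappa)L(\varepsilon)$ does not confine $Y_{U'}$ to a finite set, since a fixed vector lies in infinitely many rational subspaces of dimension at least $2$. Comparing through the fixed point $0$ only helps for the component containing $0$, which need not exist and need not be $U$.

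The missing ingredient is the volume-preserving hypothesis, which your argument never uses. The paper uses it through Poincar\'e recurrence. Since $f$ preserves volume, its non-wandering set is all of $\mathbb{T}^N$. So the open set $\pi(U)$ satisfies $f^{l}(\pi(U))\cap\pi(U)\neq\emptyset$ for some $l\geq 1$. Lifting, $(F^{l}(U)+h)\cap U\neq\emptyset$ for some $h\in\mathbb{Z}^N$. Since $F^{l}(U)+h$ is a connected component of the $F$- and $\mathbb{Z}^N$-invariant set $V$, it equals $U$. Then directly
\[ U=F^{l}(U+n)+h=F^{l}(U)+A^{l}n+h=U+A^{l}n. \]
By induction $U+A^{sl}n=U$ for all $s$, and $\Gamma=\langle n,A^{kl}n,\dots,A^{kl(\dim X-1)}n\rangle$ has full rank because $n$ is cyclic for $A^{kl}$ (Lemma \ref{Pseudo Anosov subspace}). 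No stabilizer subspaces or pigeonhole over heights are needed.

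If you want to keep your pigeonhole framing, the right quantity is volume, not inscribed radius. The sets $f^{kj}(\pi(U))$ are connected components of the same open subset of $\mathbb{T}^N$, all of equal positive volume, so only finitely many are distinct. Two coincide, and lifting gives $F^{kp}(U)=U+m$ for some $p\geq 1$ and $m\in\mathbb{Z}^N$.
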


\begin{proof}
    (Same as in [RH], expanded). The non-wandering set of $f$ is the whole $\mathbb{T}^N$ and $U$ is open. This implies the existence of $l\in\mathbb{N}$ and $h\in\mathbb{Z}^N$ such that $(F^l(U)+h)\cap U\neq\emptyset$. Since $V$ is $F$-invariant and $\mathbb{Z}^N$-invariant, $F^l(U)+h$ is a connected component of $V$, and therefore $U$, since their intersection is non-empty. To recap $U= F^l(U)+h$.\newline
    Using the fact that $F=A+G$, and that $A$ is linear and $G$ is $\mathbb{Z}^N$-periodic we can show by induction that $F^l(U+n)=F^l(U)+A^ln$.\newline
    Combining the two and lemma \ref{n epsilon}, we can write
    \begin{align*}
        U &= F^l(U) + h\\
        &= F^l(U+n) + h\\
        &= F^l(U) + A^ln + h\\
        &= U + A^ln.
    \end{align*}
By induction we get that $U$ is invariant by $A^{sl}n$ for every $s\in\mathbb{N}$.\newline
We set $\Gamma=\langle n, A^{kl}n, \dots, A^{kl(\dim(X)-1)}n\rangle$, which, by lemma \ref{Pseudo Anosov subspace}, is a full rank subset of $\Lambda$, since $n$ is cyclic for $A^{kl}$ and, by our proof, $U+\Gamma=U$. ($k$ is from Lemma \ref{Pseudo Anosov subspace}.)
\end{proof}

\begin{remark}
    Here we used Poincar\'e recurrence (and hence the volume preserving assumption) to see that the non-wandering set is full.
\end{remark}

\begin{lemma}
    $U$ is simply connected.
\end{lemma}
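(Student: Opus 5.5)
We need to show that $U$, a connected component of $V=\pi^{-1}(E)$, is simply connected. Here $E$ is closed, $su$-saturated and $f$-invariant; I treat $U$ as open (as the earlier lemmas implicitly do), or else replace it by its interior. The strategy follows \cite{Hertz2005}: use the product structure $\Phi_x$ to deform $U$ onto its ``center shadow'' $\pi^{su}(U)\subset W^c(0)$, and then show that this two-dimensional set has no holes.

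\textbf{Step 1.} Since $V$ is $su$-saturated and each $AC(z)$ is connected, $U$ is a union of accessibility classes, hence $su$-saturated. By Lemma \ref{homomorphism}, the map $\Phi_0$ is a homeomorphism $\mathbb{R}^N\to\mathbb{R}^N$. I will use the coordinates $z\mapsto(\pi^{su}(z), \text{stable and unstable coordinates})$. Equivalently, I will exhibit a deformation retraction of $U$ onto $U\cap W^c(0)=\pi^{su}(U)$. The retraction slides $z$ along $W^u(z)$ to $\pi^u(z)$, and then along $W^s$ to $\pi^s\pi^u(z)$, moving each point linearly in the $E^u$ and then $E^s$ parameters of the graphs. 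These paths are $su$-paths, so they stay inside $U$. Continuity follows from the continuity of the foliations and of $\Phi_x^{-1}$. Therefore $U\simeq \pi^{su}(U)$, which is an open connected subset of the plane $W^c(0)\cong E^c$.

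\textbf{Step 2.} An open connected subset $D$ of a plane is simply connected if and only if its complement in $S^2=E^c\cup\{\infty\}$ is connected, i.e. $D$ has no bounded complementary component. Suppose instead that $K$ is a compact component of $E^c\setminus D$ enclosed by a loop in $D$. Pulling back, $U^c$ would contain the set $\Phi(K\times E^s\times E^u)$, which is $su$-saturated. Projected to $\mathbb{T}^N$, it gives an $su$-saturated piece of the complement of $E$ that is bounded in the center direction.

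\textbf{Step 3 (main obstacle).} The goal is to rule out such a bounded center ``hole''. The intended approach uses the invariance $U+\Gamma=U$ from the preceding lemma together with the center holonomies: translation by $n\in\Gamma$ acts on $\pi^{su}(U)$ via $\hat T_n$. By Lemma \ref{Tn log}, $\hat T_n$ is close to translation by $n^c$ up to a $\log$ error. By Lemma \ref{Diphantine center}, the vectors $n^c$ for $n\in\Gamma$ are dense in $E^c$, since $\Gamma$ has full rank in $X\supset E^c$ and $\dim X\ge 4$. Consequently $D=\pi^{su}(U)$ is invariant under the maps $\hat T_n$, which approximate a dense set of translations.

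Holes therefore also come in $\Gamma$-translated copies. I would combine this with $f$-invariance: $F^l(U)+h=U$, and $F$ acts on $W^c$ as a near-isometry close to the rotation $A|_{E^c}$. Iterating this would produce hole components of bounded diameter at all scales, while Lemma \ref{infinite volume} shows that $U$ contains, around any of its points, $su$-saturated balls whose center width is $\varepsilon$-fixed. I would try to exhibit a hole $K$ together with a translate $\hat T_n(K)$, chosen with $|n^c|$ small compared to $\operatorname{diam}K$ but with the translate overlapping $K$, so that a loop in $D$ around $K$ is carried to a loop crossing $K$. This would be a contradiction.

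Controlling the $C\log|n|$ error in Lemma \ref{Tn log} against the needed precision is the delicate part. For that I would choose $n\in\Gamma$ with $|n|$ bounded, using Lemma \ref{n epsilon}, and rely on Lemma \ref{Tn close to linear}. If this route fails, the fallback is an argument (as deferred to the appendix) showing directly that the retraction in Step 1 can be chosen so that loops in $U$ contract through $su$-saturated neighbourhoods given by Lemma \ref{proper 1}.
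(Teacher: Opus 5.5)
Your Steps 1 and 2 are fine and coincide with the paper's reduction. The deformation retraction of $U$ onto $U\cap W^c(0)$ along unstable and then stable plaques is an elementary substitute for the fibration property of $\pi^{su}$. The planar criterion is the one the paper uses. So everything reduces to showing that every component of $C=E^c\setminus(\sigma_0^c)^{-1}(U)$ is unbounded.

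The gap is Step 3, which is the entire content of the lemma, and the mechanism you sketch cannot yield a contradiction. For $n\in\Gamma$, $\hat T_n$ is a homeomorphism of $W^c(0)$ that preserves both $D=U\cap W^c(0)$ and its complement, since $U+n=U$ and $U$ is $su$-saturated. Hence it permutes the components of the complement. A translate $\hat T_n(K)$ that meets $K$ simply equals $K$, and the image of a loop in $D$ surrounding $K$ stays in $D$, so it can never cross $K$. In fact a $\Gamma$-periodic array of bounded holes is compatible with every invariance you invoke. $F$ acts on the center slice as a bounded perturbation of an isometry, so $F^l(U)+h=U$ produces nothing ``at all scales''.

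The ``dense translations'' heuristic fails exactly where it is needed. Lemma \ref{Diphantine center} is a lower bound $|n^c|\geq c'/|n|^r$, not a density statement. So making $|n^c|$ small forces $|n|$ large, and then the error $C\log^+|n|+C$ of Lemma \ref{Tn log} swamps $|n^c|$. Restricting to bounded $|n|$ does not help either: Lemma \ref{n epsilon} gives $|n_\varepsilon|$ of order $\varepsilon^{-2}$, not bounded, and for bounded $n$ the vectors $n^c$ stay bounded away from $0$. The fallback via Lemma \ref{proper 1} is purely local and says nothing about a loop encircling a hole.

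The missing idea is to argue inside the complement, which is itself $su$-saturated. Then accessibility classes of points $\hat x=\sigma_0^c(x)\notin U$ provide continua contained in $C$. The paper's argument runs as follows.

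\begin{enumerate}
\item The volume argument of Lemma \ref{n epsilon}, run with $\Gamma$ in place of $\Lambda$ so that $U+n=U$, gives $n=n_\varepsilon$ with $|n|\le CL(\varepsilon)$. It also gives a $5$-legged $su$-path of length at most $10L(\varepsilon)$ from $W^c_\varepsilon(\hat x+n)$ to $W^c_\varepsilon(\hat x)$.
\item By Lemma \ref{Holonomies Lip}, the holonomy $\hat S$ along this path has Lipschitz constant at most $C\varepsilon^{-10\beta}$. So $\hat S(\hat x+n)\in AC(\hat x+n)$ lies within $C\varepsilon^{1-10\beta}$ of $\hat x$. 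The relevant error is this short-path holonomy error, not the $\log|n|$ error of $\hat T_n$.
\item Set $\hat y_i=\hat T_{(i-1)n}\hat S(\hat x+n)\in AC(\hat x+in)$ and project by $\pi^{su}$ an $su$-path from $\hat y_i$ to $\hat x+in$. For $i=1,\dots,k$ this gives a curve in $C$ from a point within $\varepsilon^{1-\beta(s+14)}$ of $T_{(i-1)n}(x)$ to $T_{in}(x)$, where $s=2r+1$.
\item Choose $k\sim\varepsilon^{-s}$. Lemmas \ref{Diphantine center} and \ref{Tn log} give $|T_{kn}(x)-x|\geq c\,\varepsilon^{-1}-O(\log\varepsilon^{-1})\to\infty$. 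So $x$ is linked to arbitrarily distant points by chains in $C$ with arbitrarily small jumps.
\item A Hausdorff limit of the connected pieces inside $\overline{B(x,R)}$ then gives a connected subset of $C$ through $x$ meeting $\partial B(x,R)$, for every $R$.
\end{enumerate}

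Without some mechanism of this kind, linking a hole to its $\hat T_n$-images through the complement, Step 3 does not close.
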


\begin{proof}
    A detailed proof is given in appendix \ref{Simply connected proof}.\newline
    The proof will mostly follow the one in \cite{Hertz2005}, with the only changes occurring in the proof of their lemma 4.6. Their proof exploits an object very similar, yet different, to $W_{\varepsilon}(x)$. The modifications we introduce in the proof make up for the slight differences between these two objects.

\end{proof}

\subsection{Conclusion of the proof}\label{main proof}

From the properties proved in the previous subsection, we conclude the proof with a topological argument. We actually prove the following statement, from which Theorem \ref{minimality} follows directly.

\begin{proof}
\begin{lemma}
    If $f$ is sufficiently close to $A$ in $\mathcal{C}_{\text{vol}}^1(\mathbb{T}^N)$, for every $V\subset\mathbb{R}^n$ $su$-saturated, $F$-invariant $\mathbb{Z}^N$-invariant non-empty open set, $V=\mathbb{R}^N$.
\end{lemma}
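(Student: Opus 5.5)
We take a connected component $U$ of $V$ and aim to show $U=\mathbb{R}^N$. Since $V$ is open, $su$-saturated, $F$-invariant and $\mathbb{Z}^N$-invariant, the previous subsection applies to $U$. First, $U$ is itself $su$-saturated: stable and unstable leaves are connected, so $su$-paths starting in $U$ stay in the component $U$. Second, $U+\Gamma=U$ for a full rank subgroup $\Gamma\subset\Lambda$ of $X$. Third, $U$ is simply connected. The goal is to show that $U$ meets every accessibility class of $F$ in $\mathbb{R}^N$. By $su$-saturation this gives $U=\mathbb{R}^N$.

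\textbf{Reduction to the center leaf.} Since $\pi^{su}(z)\in AC(z)$, every accessibility class meets $W^c(0)$. It therefore suffices to show that $W^c(0)\subset U$, or, by saturation, that $\pi^{su}(U)=W^c(0)$. Set $D=(\sigma^c_0)^{-1}(\pi^{su}(U))\subset E^c\cong\mathbb{R}^2$. The map $\pi^{su}$ is continuous and open on $su$-saturated sets, by the local product structure of Lemma \ref{proper 1}, so $D$ is open and connected. Moreover $\pi^{su}(U)=U\cap W^c(0)$ because $U$ is saturated. Invariance of $U$ under $\gamma\in\Gamma$ means that $D$ is invariant under the holonomy maps $T_\gamma$: translating by $\gamma$ and returning to $W^c(0)$ along an $su$-path keeps us in $U$. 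By Lemma \ref{Tn log}, $T_\gamma$ is within $C\log^+|\gamma|+C$ of the translation $x\mapsto x+\gamma^c$. Since $E^c\subset X$ and $\Gamma$ has full rank in $X$, the projections $\gamma^c$ span $E^c$, and by Lemma \ref{Diphantine center} they are even dense in $E^c$.

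\textbf{Topological step.} We would show that a connected open set $D\subset\mathbb{R}^2$, invariant under two maps $T_1,T_2$ that are bounded-distance perturbations of translations by linearly independent vectors $v_1,v_2$, and with $U$ simply connected, must be all of $\mathbb{R}^2$. We take $T_i=T_{\gamma_i}$ with $\gamma_1,\gamma_2\in\Gamma$ such that $\gamma_1^c,\gamma_2^c$ are independent; since $D$ is invariant under each, it is invariant under their iterates. The argument would use simple connectivity as follows.
\begin{enumerate}
\item Suppose $D\neq\mathbb{R}^2$ and pick $p\notin D$.
\item The complement of $U$ is closed, $su$-saturated and invariant under $\Gamma$. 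Hence its trace on $W^c(0)$, which contains $p$, also contains points near $p+m_1\gamma_1^c+m_2\gamma_2^c$, up to $O(\log)$ errors, for all $m_1,m_2$.
\item Take a point $q\in D$ and a path in $D$ from $q$ to $T_1(q)$, then to $T_2T_1(q)$, and so on, forming a large closed loop in $D$ that winds around $p$.
\item Lift this loop back to $U$ using $su$-paths together with the $\Gamma$-translations. This gives a loop in $U$ which, projected by the retraction $\pi^{su}$ (a continuous map $U\to D$), has nonzero winding number around $p$.
\item If this loop in $U$ can be chosen to be the image of a closed loop in $U$ whose $\pi^{su}$-image winds around the missing point $p$, then contracting it in $U$ and applying $\pi^{su}$ contracts it in $D\subset\mathbb{R}^2\setminus\{p\}$, which is impossible.
\end{enumerate}
So simple connectivity of $U$, together with $\pi^{su}$ being a deformation-type retraction of $U$ onto $D$ along $su$-leaves, makes $D$ simply connected. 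A simply connected open planar set invariant under two quasi-translations with independent drift cannot omit a point: its complement would contain a $\Gamma$-quasi-periodic family of points forming a net, and a simply connected region avoiding a net is confined within bounded distance of a tree-like set, which contradicts the invariance.

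\textbf{Main obstacle.} The hardest step is making this last planar argument rigorous when the $T_\gamma$ are not exact translations and do not compose as a group ($T_n\circ T_m\neq T_{n+m}$). The errors grow like $\log|\gamma|$, while the drift of $T_{\gamma}^m$ grows linearly in $m$. I would first try to exploit this: work with the genuine translations $U\mapsto U+\gamma$ in $\mathbb{R}^N$ rather than in $D$, and build the winding loop directly in $U$ by joining $x$ to $x+\gamma_1$, $x+\gamma_1+\gamma_2$, $x+\gamma_2$ and back to $x$. Projecting this loop by $\pi^{su}$ gives a curve whose center coordinates are within $O(\log)$ of the parallelogram spanned by $m\gamma_1^c$ and $m\gamma_2^c$. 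For large $m$ it therefore winds around any fixed point $p$, while it is null-homotopic in $U$, and hence in $D$.
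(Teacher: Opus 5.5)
Your final construction is correct and is essentially the paper's argument. You build a loop in $U$ from $\Gamma$-translates of paths in $U$ joining $x$ to $x+\gamma_j$, then project it by $\pi^{su}$ to a curve in $U\cap W^c(0)$; that curve winds around an omitted point, yet it is null-homotopic because $U$ is simply connected. The paper does the same thing with $\Phi^{-1}$ and $\pi_1(\mathbb{R}^N\setminus E^{su})$, and indeed the $E^c$-coordinate of $\Phi_0^{-1}$ is exactly $(\sigma_0^c)^{-1}\circ\pi^{su}$.

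A few small repairs are needed:
\begin{enumerate}
\item Center the polygon around $p$, for example by using corners $x\pm m\gamma_1\pm m\gamma_2$. With a vertex at $x$, a fixed $p$ does not end up at distance $\gg\log m$ from the boundary.
\item Justify the $O(\log m)$ bound for $\pi^{su}$ at arbitrary points. This is not Lemma \ref{Tn log} as stated, but it follows from the same backward-iteration argument, since $F^{-1}=A^{-1}+G'$ with $G'$ bounded and $A|_{E^c}$ an isometry. Alternatively, replace it by the linear bound of Lemma \ref{invariant manifolds are Lip graphs}, after choosing the $\gamma_j$ in a thin cone around $E^c$, as the paper does with $C_\varepsilon$.
\item Drop the planar ``tree-like set'' heuristic in the middle of the proposal; it is unjustified and not needed.
\end{enumerate}
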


In view of lemma \ref{proper 2} and the $su$-saturation of $U$, to conclude the proof it is enough to show that $U$ is $E^c$-saturated.\newline

Fix $x\in U$ and $y\in E^c(x)$. We want to conclude that $y\in U$.\newline
The main idea is to combine the connectedness of $U$ and its $\Gamma$-invariance to build a curve in $U$ that wraps around $W^s(W^u(y))$. Then exploit the simple connectedness of $U$ to deduce that it must intersect $W^s(W^u(y))$, which yields the conclusion, by its $su$-saturation.\newline

\noindent We first need a preliminary lemma.\newline
Let us define $C_{\varepsilon}=\{z\in \mathbb{R}^n: |z^{su}|<\varepsilon|z|\}$ and $C_{y,\varepsilon}=y+C_{\varepsilon}$.

\begin{lemma}\label{piecewise linear curve}
    Given $x\in U$ and $y\in E^c(x)$ For every $\varepsilon>0$, there exist, $n_1,n_2,n_3\in \Gamma$ such that, for every $R>0$ there exist $M\in\mathbb{N}$ and a closed piecewise linear curve $\gamma:[0,M]\rightarrow \mathbb{R}^n\setminus E^{su}(y)$, such that:
    \begin{enumerate}
        \item $\gamma(i)\in x+\Gamma$, for $i=0,\dots,M$;
        \item $\gamma'|_{(i,i+1)}\equiv n_{j(i)}$ for a suitable $j(i)\in\{1,2,3\}$, for $i=0,\dots,M$;
        \item $\pi_1(\mathbb{R}^n\setminus E^{su}(y))=(\gamma)$;
        \item $\gamma(t)\in C_{y,\varepsilon}$ and $|\gamma(t)-y|>R$ for every $t$.
    \end{enumerate}
\end{lemma}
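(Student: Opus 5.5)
The plan is to work entirely with linear algebra in the affine space $y+E^{su}\oplus E^c$. Here $\mathbb{R}^N\setminus E^{su}(y)$ deformation retracts onto a circle in $E^c(y)\setminus\{y\}$, so its fundamental group is $\mathbb{Z}$. A closed curve generates $\pi_1$ exactly when the projection of $\gamma-y$ to $E^c$ (along $E^{su}$) has winding number $\pm1$ around $0$. I would therefore reduce items 3 and 4 to a statement about the center components $n_j^c$, while keeping the $su$-components small relative to the total displacement.

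The first step is the choice of $n_1,n_2,n_3\in\Gamma$. Since $\Gamma$ has full rank in $X\supset E^c$, its projection $\Gamma^c$ spans $E^c$. Moreover, since $A^{kl}$ acts as a rotation-like isometry on $E^c$ and expands or contracts $E^s,E^u$, I would take elements of the form $A^{klm}n$ with suitably chosen signs. Their $su$-parts are large in general, so iteration alone will not work. Instead I would use Lemma \ref{Diphantine center}'s companion fact that $\Gamma^c$ is dense in $E^c$ (the projection is injective by Lemma \ref{Diphantine center}, and there are $\dim X\ge 4>2$ generators). The cleaner route is a pigeonhole/Dirichlet argument on the lattice $\Gamma$ inside $X$: the cone $C_{\varepsilon}\cap X$ contains lattice points of arbitrarily large norm in every open sector of directions around $E^c$. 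This holds because the cone $\{|z^{su}|<\varepsilon|z|\}$ restricted to $X$ contains convex sets of arbitrarily large volume of the form (large disk in $E^c$)$\times$(fixed small ball in $X\cap E^{su}$), and Minkowski's theorem gives nonzero lattice points in them.

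Using this, I would pick $n_1,n_2,n_3\in\Gamma\cap C_{\varepsilon/2}$ whose center components $n_j^c$ point in three directions making angles roughly $2\pi/3$ with each other. Then $0$ lies in the interior of the convex hull of the $n_j^c$, so positive integers $a_1,a_2,a_3$ with $\sum a_j n_j^c$ small exist. This gives a "triangle" loop in $E^c$ winding once around any point far inside it. Since the $su$-parts are $\le\frac{\varepsilon}{2}|n_j|$, the choice of $n_j$ does not depend on $R$, as required.

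Given $R$, I would build the closed curve as follows. Start at $x$ and first follow segments with directions $n_j$ to travel far away in $E^c$, to distance about $R'\gg R$ from $y$ in the center direction. Then traverse a large polygonal loop made of blocks of $n_1$, $n_2$, $n_3$ steps, encircling $y^c$ once at distance comparable to $R'$, and finally return along the same initial path. Closing up exactly requires an integer relation $\sum m_j n_j=0$ with $m_j>0$, which generally fails since the $n_j$ are linearly independent over $\mathbb{R}$. The fix is to conjugate the loop by the path to a basepoint: $\gamma(0)=x$, go out, loop, come back. The outgoing and returning legs cancel, and the loop part must itself be closed. So I would instead take the loop to be a parallelogram $n_1$-block, $n_2$-block, $-n_1$-block, $-n_2$-block. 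For this, the allowed directions should include negatives; I would fold $-n_1$ into the set by taking $n_3=-n_1-n_2$-type choices, or simply allow $\pm n_j$. This possible need for signs is the point I'd adjust in the statement if needed.

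Item 4 then follows from the cone estimates. Every vertex $z$ of the loop satisfies $|(z-y)^{su}|\le\sum|{\rm steps}^{su}|\le\frac{\varepsilon}{2}\sum|{\rm steps}|$, and $|(z-y)^c|\gtrsim R'$ dominates once $R'$ is large compared with $|x-y|$. The one place needing care is to avoid $E^{su}(y)$ along the way out and back. I would choose the outgoing direction so that its center component moves away from $y^c$, which keeps every point's center distance at least $|x^c-y^c|$. Since $y\in E^c(x)$ and $y\neq x$, this distance is positive. I expect the main obstacle to be simultaneously securing the cone condition along the connecting leg near $x$ (where $|\gamma-y|$ is not large) and the requirement $|\gamma(t)-y|>R$. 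To handle it, I would let $\gamma$ start at a translate $x+g$, $g\in\Gamma$ far out in the cone, using $\Gamma$-invariance, so that the whole curve lies far from $y$.
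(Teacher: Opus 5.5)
There is a genuine gap exactly where the loop has to close. Your reduction to a winding-number statement about center projections is correct, and so is the idea of a large loop inside thin cones around $E^c$. The problem is item 2: it only allows steps equal to one of three fixed vectors $n_1,n_2,n_3$, traversed forwards. So a closed $\gamma$ requires a relation $a_1n_1+a_2n_2+a_3n_3=0$ with nonnegative integers $a_j$, not all zero. If you pick the three vectors independently in three sectors (typically linearly independent over $\mathbb{R}$, as you note), no such relation exists. Both of your repairs, the parallelogram built from blocks of $\pm n_1,\pm n_2$ and the ``go out, loop, come back'' conjugation, use reversed steps. They therefore prove a variant of the lemma with four or more directions, not the stated one. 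Your remark about ``$n_3=-n_1-n_2$-type choices'' points to the right fix, but it is left as a hedge and conflicts with your earlier independent choice. Once you adopt it, the plain triangle of $K$ steps of each $n_j$ already closes, so the parallelogram and the outgoing and returning legs are unnecessary.

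The missing idea, which is the paper's entire proof, is to take the three directions to be the edges of a lattice triangle. Let $d_\Gamma=\mathrm{diam}(X/\Gamma)$, so every point of $X$ is within $d_\Gamma$ of $\Gamma$.
\begin{enumerate}
\item Choose $z_1,z_2,z_3\in E^c$ whose convex hull contains $B(0,3d_\Gamma/\varepsilon)$, and choose $v_i\in\Gamma$ with $|v_i-z_i|<d_\Gamma$.
\item Set $n_i=v_{i+1}-v_i$ (indices mod $3$). Then $n_1+n_2+n_3=0$ automatically.
\item The triangle $x+Kv_1\to x+Kv_2\to x+Kv_3\to x+Kv_1$, made of $K$ unit steps per side, is closed and has all vertices in $x+\Gamma$.
\end{enumerate}
The cone condition then follows from an absolute bound rather than from accumulating step errors. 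For $z$ on the triangle, $|(z-x)^{su}|<Kd_\Gamma$ by convexity. Also $|(z-x)^c|\ge 2Kd_\Gamma/\varepsilon$, because $B(0,2d_\Gamma/\varepsilon)\subset\mathrm{conv}(v_1^c,v_2^c,v_3^c)$. Since $x-y\in E^c$, letting $K\to\infty$ gives winding number one around $y$, $\gamma\subset C_{y,\varepsilon}$, and $|\gamma(t)-y|>R$.

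Two side remarks. First, your estimate $|(z-y)^{su}|\le\frac{\varepsilon}{2}\sum|\mathrm{steps}|$ would not give $C_{y,\varepsilon}$ even after fixing closure. The accumulated path length along the loop is several times $|z-y|$, so you would need the $n_j$ in a cone thinner by a geometric factor. Second, Minkowski's theorem gives a nonzero lattice point in a symmetric convex body but no control over which sector it lies in. The covering-radius fact above is what produces lattice points near prescribed far-out points of $E^c$.
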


\begin{proof}
    We can suppose that $x=0$.\newline
    Define $d_{\Gamma}=\text{diam}(X/\Gamma)$. Take three points $z_1,z_2,z_3\in E^c$, such that $B(0,3d_{\Gamma}/\varepsilon)$ lies inside their convex hull. We can find $v_1,v_2,v_3\in\Gamma$ such that $|v_i-z_i|<d_{\Gamma}$. Therefore, $B(0,2d_{\Gamma}/\varepsilon)$ lies in the convex hull of $v_1^c, v_2^c,v_3^c$. This and the fact that $|v_i^{su}|<d_{\Gamma}$ imply that $\overline{v_iv_{i+1}}\subset C_{\varepsilon/2}$ (indices must be intended modulo 3).\newline
    Set $n_i=v_{i+1}-v_i$ (this does not depend on $R$).\newline
    We now take $K\in\mathbb{N}$, whose size will be specified later.\newline
    Consider $w_i=Kv_i$, and define the closed curve $\gamma:[0,3K]\rightarrow\mathbb{R}^n$, such that $\gamma(i)=w_i$, and $\gamma$ linear between these points.\newline
    Conditions 1 and 2 hold for any $K$.\newline
    For $K>(2d_{\Gamma}/\varepsilon)/|x-y|$, we have that $y$ lies in the convex hull of $w_1^c,w_2^c,w_3^c$, hence condition 3.\newline
    We notice that $\gamma(t)\in C_{\varepsilon/2}$ and $|\gamma(t)|>K(2d_{\Gamma}/\varepsilon)$ for every $t\in[0,3K]$.\newline
    For $K>|x-y|/d_{\Gamma}$, we have that $\gamma(t)\in C_{\varepsilon/2}\cap B(0,2|x-y|/\varepsilon)^c\subset C_{y,\varepsilon}$.\newline
    Finally, we pick $K$ large enough to ensure that $|\gamma(t)|>K(2d_{\Gamma}/\varepsilon)>R+|y|$, which is what we need to meet condition 4.\newline
    This concludes the proof of the lemma.\newline
\end{proof}

For the rest of the proof, we can suppose that $x\in E^c$ and $y=0$, for the sake of notation. Since $U$ is $su$-saturated, we assume, by contradiction, that $U\cap W^u(W^s(0)))=\emptyset$.\newline
We apply Lemma \ref{piecewise linear curve}.
Being $U$ connected there exist a path $\tilde\gamma_j:[0,1]\rightarrow U$ that joins $x$ and $x+n_j$ for $j=1,2,3$. By compactness, there exists a constant $C_{n_1,n_2,n_3}>0$ (that depends only on $n_1,n_2,n_3$) such that $|(\tilde\gamma_j(t)-x)-t\gamma_j)|<C$ for every $t\in[0,1]$. We set $R=2C_{n_1,n_2,n_3}/\varepsilon$. We can now define a new curve $\tilde\gamma:[0,M]\rightarrow \mathbb{R}^n$ by replacing its linear pieces with $\Gamma$-translates of the curves $\tilde\gamma_j$. More precisely
    \begin{equation}
        \gamma(t) = \gamma(i) + \tilde\gamma_{j(i)}(t)-x   \qquad \text{for }t\in[i,i+1).
    \end{equation}
Since $U$ is $\Gamma$-invariant, $\tilde\gamma:[0,M]\rightarrow U$. Moreover $\|\gamma-\tilde\gamma\|_{\infty}<C_{n_1,n_2,n_3}$. This implies that $\tilde\gamma(t)\in C_{2\varepsilon}$ for every $t$, and at the same time that, $\gamma$ and $\tilde\gamma$ are homotopic in $\mathbb{R}^n\setminus E^{su}$, implying that $\pi_1(\mathbb{R}^n\setminus E^{su})=(\tilde\gamma)$.\newline
Recall the map $\Phi_y$ from section \ref{Su-saturation}, and set $\Phi=\Phi_0$. By lemma \ref{homomorphism}, we can apply $\Phi^{-1}.$ Now, $\Phi^{-1}(W^u(W^s(0)))=E^{su}$ and we define $\hat\gamma=\Phi^{-1}\circ\tilde\gamma$. If $\kappa$ and $\varepsilon$ are small enough we have that $|\hat\gamma(t)-\tilde\gamma(t)|<|\tilde\gamma(t)^c|$ for every $t$. This implies that $\hat\gamma$ is homotopic to $\tilde\gamma$ in $\mathbb{R}^n\setminus E^{us}$ and therefore its homotopy type is nontrivial in $\mathbb{R}^n\setminus E^{us}$. Finally, we have that $\Phi^{-1}(U)\cap E^{su}=\emptyset$ and $\text{Im}(\hat\gamma)\subset \Phi^{-1}(U)$. Since $U$ has trivial fundamental group and $\Phi$ is an homomorphism, $\hat\gamma$ should be contractible in $\Phi^{-1}(U)$, but then, a fortiori, it should be contractible in $\mathbb{R}^n\setminus E^{su}$, providing the desired contradiction.
\end{proof}

\section{Proof of Stable Ergodicity}\label{proof of ergodicity}

This section is devoted to the proof of theorem \ref{stable ergodicity}.\newline
As mentioned, the proof follows ideas from \cite{Hertz2005}, with our theorem \ref{minimality} playing the role of theorem 4.1 in \cite{Hertz2005}. Our exposition takes advantage of more recent results about accessibility classes.\newline

Our goal is to prove the ergodicity of $f$ by showing that it is either accessible or essentially accessible. Hence we start by describing the structure of the accessibility classes.\newline

For $x\in\mathbb{T}^N$ we denote its accessibility class with respect to $f$ by $AC_{f}(x)$, while for $\tilde x\in \mathbb{R}^N$ we denote the one with respect to $F$ by $AC_F(\tilde x)$. When no confusion arises, we will drop this indices. If $\tilde x$ is a lift of $x$ we have that $\pi^{-1}(AC_{f}( x)) = AC_F(\tilde x)+\mathbb{Z}^N$ and, in particular, $AC_F(\tilde x)$ is the arc-connected component of $\pi^{-1}(AC_{f}( x))$ containing $\tilde x$.\newline

We remark that if $f$ is close enough to $A$, $f$ is dynamically coherent and center bunching. Therefore, we can apply theorem B from \cite{rodriguez2017structure}.

\begin{theorem}
Let $f : M \rightarrow M$ be a partially hyperbolic $\mathcal{C}^2$ diﬀeomorphism on a closed Riemmaninan manifold. Suppose $f$ to have a two-dimensional center bundle, and to be dynamically coherent and center bunching.\newline
Then all accessibility classes are injectively immersed $\mathcal{C}^1$-submanifolds.
\end{theorem}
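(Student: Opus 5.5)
This statement is Theorem B of \cite{rodriguez2017structure}, quoted here as an external input. The plan is to verify its hypotheses for perturbations of $A$ and to recall the structure of its proof, rather than to reprove it from scratch.

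\textbf{Checking the hypotheses.} For $f$ $\mathcal{C}^1$-close to $A$, partial hyperbolicity and dynamical coherence follow from the absolute partial hyperbolicity of $A$ via the stability results recalled in Section 2. Center bunching holds because $A|_{E^c}$ is an isometry: the ratio $\|D_xf(v)\|/\|D_xf(w)\|$ for unit $v,w\in E^c_f(x)$ is close to $1$, which lies strictly between $\lambda_s$ and $\mu_u$. Finally, $\dim E^c_f=\dim E^c=2$.

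\textbf{Structure of the proof.} Fix $x$ and work inside the two-dimensional center leaf $W^c(x)$. The key object is the set $C(x)=AC(x)\cap W^c_{loc}(x)$.

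First, I would show that $C(x)$ is locally invariant under the stable and unstable holonomies $h^s,h^u$ between center leaves. Center bunching makes these holonomies $\mathcal{C}^1$ maps of center leaves, and $C(x)$ is the orbit of $x$ under the pseudogroup they generate.

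Second, I would classify the possible local shapes of $C(x)$ in a two-dimensional leaf: open, a point, or a one-dimensional set. The point case is the easiest, since $AC(x)$ is then locally $W^s(W^u(x))$, a $\mathcal{C}^1$ submanifold by unique integrability.

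The open case follows by an argument in the spirit of Lemma \ref{proper 1}. If $C(x)$ has nonempty interior, then $W^u(W^s(\cdot))$ of an open center piece contains an open set, so $AC(x)$ is open.

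\textbf{Main obstacle.} The hard case is the intermediate one, where $C(x)$ has empty interior but more than one point. Here I would show that $C(x)$ is a $\mathcal{C}^1$ curve. The approach is to use $\mathcal{C}^1$-ness of the holonomies together with a tangent-cone argument: at each point, the set of limit directions of $C(x)$ must be invariant under the derivatives of the holonomy loops. In dimension two, the only possibilities are that this set is a line or that it has open interior, and the second forces $C(x)$ to be open.

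Once $C(x)$ is known to be a curve, the class $AC(x)$ is locally the $su$-saturation of a $\mathcal{C}^1$ curve. This gives a $\mathcal{C}^1$ submanifold of codimension one, transverse to the center.

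Injectivity of the immersion then follows from the fact that accessibility classes are equivalence classes, so each point lies in exactly one. This intermediate case is where the full argument of \cite{rodriguez2017structure} is needed, and in the paper I would simply cite it.
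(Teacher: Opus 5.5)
Your proposal matches the paper, which does not prove this statement but quotes it as Theorem B of \cite{rodriguez2017structure}, after remarking that $\mathcal{C}^1$-small perturbations of $A$ are dynamically coherent and center bunched; your hypothesis check, including center bunching from $A|_{E^c}$ being an isometry, is correct. One caveat: your sketch of the intermediate case should not be presented as an outline of that proof, because the tangent-cone dichotomy is false as stated (the set of limit directions could be finite or a Cantor set, a single limit direction does not make $C(x)$ a $\mathcal{C}^1$ curve, and a full cone of directions does not make $C(x)$ open), so your argument rests entirely on the citation.
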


We can then consider the quantity $\text{codim}(AC(x))$. By our previous consideretions we also have that $\dim(AC_F(\tilde x))=\dim AC_f(x)$.\newline We investigate how it varies in $x$, with the goal of proving that it must be constant.\newline
The first remark is that $\text{codim}(AC(x))=\text{codim}_{W^c(0)}(AC(x)\cap W^c(0))$.

\begin{lemma}\label{AC invariant homotopy}
    Working in $\mathbb{R}^N$, consider $x\in W^c(0)$ and $y\in W^c(0)\cap AC(x)$.\newline
    There exists a continuous homotopy $H:W^c(0)\times[0,1]\rightarrow W^c(0)$, such that $H(x,0) = x$, $H(x,1) = y$, $H(z,[0,1]) \subset AC(z)$ for any $z\in W^c(0)$. 
\end{lemma}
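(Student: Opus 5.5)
The idea is to build $H$ by continuously deforming an $su$-path from $x$ to $y$ to the trivial path, and then transporting every point of $W^c(0)$ along the resulting family of holonomies. Since $y\in AC(x)$, there is an $su$-path $\gamma$ from $x$ to $y$ with finitely many legs, say with corners $x=x_0,x_1,\dots,x_K=y$. Here $x_{i+1}$ lies on $W^s(x_i)$ or on $W^u(x_i)$. For $t\in[0,1]$ let $\gamma_t$ be the truncated path obtained by following $\gamma$ for the portion of arclength $t\,\ell(\gamma)$ (parametrize each leg by its intrinsic coordinate in $E^s$ or $E^u$ through $\sigma^s_{x_i}$, $\sigma^u_{x_i}$), and write $\gamma(t)$ for its endpoint. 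Each point $\gamma(t)$ lies in some center leaf $W^c(\gamma(t))$. Following the paper's holonomy construction, the concatenated holonomy $h^{\gamma_t}:W^c(x)\to W^c(\gamma(t))$ is defined. On a leg inside $W^s(x_i)$ one uses $h^s_{x_i,\gamma(t)}$, which is legitimate because $\gamma(t)\in W^s(x_i)\subset W^{cs}(x_i)$. The unstable legs are handled in the same way.

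Next, project back to $W^c(0)$. Since $x,y\in W^c(0)$, we have $W^c(x)=W^c(y)=W^c(0)$. Define
\[
H(z,t)=\pi^{su}\bigl(h^{\gamma_t}(z)\bigr),
\]
or, more invariantly, $H(z,t)=h^{su}_{\gamma(t),0}\circ h^{\gamma_t}(z)$, which is the preferred holonomy back to $W^c(0)$. Every step moves points only along stable or unstable leaves, so $H(z,t)\in AC(z)$ for all $z$ and $t$. At $t=0$ the path is trivial and $h^{su}_{x,0}$ restricted to $W^c(0)$ is the identity (the intersections $W^s(z)\cap W^{cu}(0)$ and $W^u(z)\cap W^{cs}(0)$ equal $z$ by Lemma \ref{unique intersection}), so $H(\cdot,0)=\mathrm{id}$. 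At $t=1$ we get $H(x,1)=h^{su}_{y,0}(y)=y$. The normalization $H(x,0)=x$, $H(x,1)=y$ therefore holds, and in fact $H_0=\mathrm{id}$, which is the useful form.

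The main obstacle is joint continuity of $H$ in $(z,t)$. This reduces to continuity of the holonomies $h^s_{p,q}(z)=W^s(z)\cap W^{cu}(q)$ jointly in $(p,q,z)$, and likewise for $h^u$ and $\pi^s,\pi^u$. I would obtain it from the continuous dependence of the foliations $\mathcal F^s,\mathcal F^u,\mathcal F^{cs},\mathcal F^{cu}$ (continuity of $g^*(x,v)$ in $x$) together with the uniqueness of transverse intersection in Lemma \ref{unique intersection} and the global graph structure of Lemma \ref{invariant manifolds are Lip graphs}. Concretely, $W^s(z)\cap W^{cu}(q)$ is the unique fixed point of a contraction built from the Lipschitz-small graphs $g^s(z,\cdot)$ and $g^{cu}(q,\cdot)$. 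Its constant is at most about $\kappa<1$, and it depends continuously on the parameters, so the fixed point varies continuously. Alternatively, one can use $\Phi_x$ and Lemma \ref{homomorphism}, since these intersection points are coordinates of $\Phi_x^{-1}$ and $\Phi$ varies continuously. Finally, continuity at the corner times $t_i$ holds because consecutive truncated holonomies agree there. Hence $H$ is a continuous concatenation over finitely many subintervals.
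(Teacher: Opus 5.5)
Your proposal is correct and is essentially the paper's proof: the paper also takes an $su$-path $\gamma$ from $x$ to $y$, sets $H(z,t)=\pi^{su}\bigl(h^{\gamma|_{[0,t]}}(z)\bigr)$, and checks the endpoint conditions and that both the truncated holonomies and $\pi^{su}$ preserve accessibility classes; your argument for joint continuity, via continuous dependence of the graphs $g^*$ and uniqueness of intersections, fills in a point the paper leaves implicit. A minor remark: your ``more invariant'' variant $h^{su}_{\gamma(t),0}\circ h^{\gamma_t}$ slides along stable leaves before unstable ones, so it is not literally the same map as $\pi^{su}=\pi^s\circ\pi^u$, but it works equally well.
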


\begin{proof}
    Let $\gamma:[0,1]\rightarrow \mathbb{R}^N$ be a $su$-curve such that $\gamma(0)=x$ and $\gamma(1)=y$. We define $H(t,z)=\pi^{su}(h^{\gamma_{|[0,t]}}(z))$. First notice that $\Im(H)\subset\Im(\pi^{su})=W^c(0)$. Then, we have that $H(x,0)=\pi^{su}(x)=x$ and that $H(1,x)=\pi^{su}(h^{\gamma}(x))=\pi^{su}(y)=y$. Finally, the last requirement is met since both $h^{\gamma|_{[0,t]}}$ and $\pi^{su}$ preserve the accessibility classes.\newline
\end{proof}

This implies that $AC_F(x)\cap W^c(0)$ is arc-connected for every $x\in W^c(0)$.\newline
In particular, if $\dim(AC(x)\cap W^c(0))=0$, we must have that $\#AC(x)\cap W^c(0)=1$. 

\begin{lemma}
    The map $x \mapsto \text{codim}(AC(x))$ is upper semi-continuous on $\mathbb{T}^N$ and $\mathbb{R}^N$.
\end{lemma}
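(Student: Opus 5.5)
Upper semicontinuity of $x\mapsto\text{codim}(AC(x))$ means that the dimension $x\mapsto\dim AC(x)$ is lower semicontinuous. In other words, if $\dim AC(x)=d$, then every point $x'$ close to $x$ has $\dim AC(x')\geq d$. The plan is to reduce to the center leaf $W^c(0)$ and then to produce, near any point, a $d$-parameter family of points in one accessibility class that depends continuously on the base point. I work in $\mathbb{R}^N$; the statement on $\mathbb{T}^N$ follows because $\dim AC_F(\tilde x)=\dim AC_f(x)$ and $\pi$ is a local diffeomorphism.

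\textbf{Step 1 (reduction to $W^c(0)$).} By the remark preceding Lemma \ref{AC invariant homotopy}, $\text{codim}(AC(x))=\text{codim}_{W^c(0)}(AC(x)\cap W^c(0))$. Since $\pi^{su}$ is continuous and preserves accessibility classes, it suffices to prove lower semicontinuity of $z\mapsto\dim(AC(z)\cap W^c(0))$ on $W^c(0)$. Here $W^c(0)$ is a $2$-dimensional leaf, so the dimension takes only the values $0,1,2$.

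\textbf{Step 2 (continuous families of holonomies).} Fix $x\in W^c(0)$ and set $d=\dim(AC(x)\cap W^c(0))$. If $d=0$ there is nothing to prove. If $d\geq1$, the set $AC(x)\cap W^c(0)$ is an injectively immersed $\mathcal{C}^1$ submanifold, by Theorem B of \cite{rodriguez2017structure} intersected with the transversal $W^c(0)$. I would then look for $d$ $su$-paths $\gamma_1,\dots,\gamma_d$ starting at $x$ such that the map
$$\Theta_x(t_1,\dots,t_d)=\pi^{su}\bigl(h^{\gamma_d|_{[0,t_d]}}\circ\cdots\circ h^{\gamma_1|_{[0,t_1]}}(x)\bigr)$$
is injective, or at least topologically non-degenerate, with image containing an open piece of $AC(x)\cap W^c(0)$. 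This is the construction behind Lemma \ref{AC invariant homotopy}, with the paths chosen so that, whenever the intermediate points are themselves moved by paths inside the class, the successive endpoints sweep a $d$-dimensional set. Replacing the starting point $x$ by a nearby $z$ gives the analogous map $\Theta_z$, defined by the same $su$-paths transported along the $su$-holonomies. Every value of $\Theta_z$ lies in $AC(z)$, and by continuity of the foliations and holonomies (Lemma \ref{Holonomies Lip}), $\Theta_z\to\Theta_x$ uniformly on a compact cube $Q$ as $z\to x$.

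\textbf{Step 3 (persistence of dimension — main obstacle).} The difficulty is to conclude from $\Theta_z\approx\Theta_x$ that $\Theta_z(Q)$ still has dimension $d$; this is where I expect the real work to lie. For $d=2$ it is a degree argument of the same kind as in Lemma \ref{proper 1}. $\Theta_x(Q)$ contains a small disk $D$ of $W^c(0)$ around $\Theta_x(\text{center})$, and the degree of $\Theta_x$ over that center is nonzero. A small uniform perturbation preserves this nonzero degree, so $\Theta_z(Q)$ contains an open set of $W^c(0)$ and $AC(z)$ is open in the center, i.e. $\dim AC(z)=N$. For $d=1$, $\Theta_x$ restricted to a segment is an injective arc, with distinct endpoints $p\neq q$. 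For $z$ close to $x$, the nearby arc $\Theta_z$ joins points close to $p$ and $q$, hence distinct, and it lies in $AC(z)\cap W^c(0)$. That set is therefore not a single point, and by the dichotomy stated after Lemma \ref{AC invariant homotopy} (dimension $0$ forces cardinality $1$), it has dimension at least $1$. Note that the $d=1$ case needs only the arc-connectedness and this dichotomy, not transversality; the genuinely delicate part is the degree argument for $d=2$, specifically choosing the paths so that $\Theta_x$ has nonzero local degree.
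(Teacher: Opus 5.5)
Your $d=1$ argument is the paper's argument. You take the homotopy $H$ of Lemma~\ref{AC invariant homotopy} along an $su$-path from $x$ to some $y\neq x$ in $AC(x)\cap W^c(0)$. By continuity, $H(z,0)\neq H(z,1)$ for $z$ near $x$ in $W^c(0)$, and the arc-connectedness dichotomy then forces $\dim(AC(z)\cap W^c(0))\geq 1$. Your Step 1 (pulling back through the continuous, class-preserving $\pi^{su}$) and your passage to $\mathbb{T}^N$ are also fine. Step 1 in fact makes explicit a reduction from $\mathbb{R}^N$ to $W^c(0)$ that the paper leaves implicit.

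The gap is the $d=2$ case, which you do not actually prove. You assert that $\Theta_x$ has nonzero local degree over a disk, but you never produce $su$-paths achieving this. Moreover, the image $\Theta_x(Q)$ containing a disk does not by itself give nonzero degree: a folding map such as $(t_1,t_2)\mapsto(|t_1|,t_2)$ covers a disk with degree $0$, and then small perturbations need not stay surjective onto a neighbourhood.

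No such argument is needed. If $\text{codim}(AC(x))=0$, then $AC(x)$ is an injectively immersed submanifold of full dimension, hence an open subset of $\mathbb{R}^N$ by invariance of domain. Every $z\in AC(x)$ has $AC(z)=AC(x)$, so $z$ also has codimension $0$. Thus $\{\text{codim}=0\}$ is a union of open classes and is open; this is what the paper means by ``open by definition.'' In your reduced setting: when $d=2$, $AC(x)\cap W^c(0)$ is already a neighbourhood of $x$ in $W^c(0)$ all of whose points have $d=2$. With this one-line replacement your proposal becomes the paper's proof, and the ``main obstacle'' you identified disappears.
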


\begin{proof}
    The set $\{x\in\mathbb{R}^N:\text{codim}(AC(x))=0\}$ is open by definition.\newline
    We are left with proving that $\{x\in\mathbb{R}^N:\text{codim}(AC(x))\leq 1\}$ is open as well. For $x\in W^c(0)$ belonging to such set, we can find $y\neq x$ in $AC(x)\cap W^c(0)$. We can then apply lemma \ref{AC invariant homotopy}, and obtain the described homotopy $H$. Since $H(x,0)\neq H(x,1)$, we have that there exists $\varepsilon>0$ such that $H(z,0)\neq H(z,1)$ for every $z\in W^c_{\varepsilon}(x)$. Therefore, for every such $z$, $\text{codim}(AC(z))\leq 1$. This ends the proof.\newline
    The statement for $\mathbb{T}^N$ follows since $\pi:\mathbb{R}^N\rightarrow \mathbb{T}^N$ is open.
\end{proof}

\begin{corollary}
    All accessibility classes have the same dimension.
\end{corollary}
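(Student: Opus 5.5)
We must prove that the codimension of accessibility classes is constant, using upper semicontinuity together with the minimality criterion (Theorem \ref{minimality}). The codimension takes values in $\{0,1,2\}$, since $\text{codim}(AC(x))=\text{codim}_{W^c(0)}(AC(x)\cap W^c(0))$ and $\dim W^c(0)=2$.

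Let $m=\min_{x\in\mathbb{T}^N}\text{codim}(AC(x))$, and consider
\begin{equation*}
E=\{x\in\mathbb{T}^N:\ \text{codim}(AC(x))\geq m+1\}.
\end{equation*}
The goal is to show that $E$ is empty.

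\emph{Closedness.} By the previous lemma, the set $\{x:\text{codim}(AC(x))\leq m\}$ is open. Hence its complement $E$ is closed.

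\emph{$su$-saturation.} The codimension is constant along an accessibility class, because it is a property of the class itself. So $E$ is a union of accessibility classes, i.e.\ $su$-saturated.

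\emph{$f$-invariance.} We have $f(AC(x))=AC(f(x))$, since $f$ maps stable and unstable leaves to stable and unstable leaves. Moreover $f$ is a diffeomorphism, so it preserves the dimension of these injectively immersed submanifolds. Hence $f(E)=E$.

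If $E$ were non-empty, Theorem \ref{minimality} would give $E=\mathbb{T}^N$. This is impossible, because $E$ excludes the points realizing the minimum $m$, and such points exist since the codimension takes only finitely many values. So $E=\emptyset$, and every class has codimension exactly $m$. The same argument then gives constancy of the dimension on $\mathbb{R}^N$, since $\dim AC_F(\tilde x)=\dim AC_f(x)$.

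The only delicate point is well-definedness: the codimension must be intrinsic to the class, so that two points in the same class have the same value. For this I would rely on the theorem from \cite{rodriguez2017structure} that accessibility classes are injectively immersed $\mathcal{C}^1$ submanifolds, which makes their dimension well defined. Also, Theorem \ref{minimality} as stated requires $f$ to be $\mathcal{C}^1$-close to $A$; this is already assumed throughout the section.
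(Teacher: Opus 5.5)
Your proof is correct and is essentially the paper's argument. The paper applies the minimality criterion in its $\mathbb{R}^N$ open-set form to the open sublevels $\{\text{codim}(AC(x))\leq k\}$. You apply it in its stated closed-set form on $\mathbb{T}^N$ to the complement $\{\text{codim}(AC(x))\geq m+1\}$, and the two formulations are equivalent by taking complements.
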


\begin{proof}
    The sublevels $\{x\in\mathbb{R}^N:\text{codim}(AC(x))\leq k\}$ are open, $su$-saturated, $F$-invariant and $\mathbb{Z}^N$-invariant. By theorem \ref{minimality}, each of these sets must be either empty or full, hence the conclusion.
\end{proof}

\begin{lemma}
    If $f$ is sufficiently $\mathcal{C}^1$-close to $A$, then $\text{codim}(AC(0))\neq 1$.
\end{lemma}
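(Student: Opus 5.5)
We argue by contradiction, following the strategy of \cite{Hertz2005}: assume $\text{codim}(AC(0))=1$. By the preceding corollary every accessibility class then has codimension $1$, so for every $z\in W^c(0)$ the set $AC_F(z)\cap W^c(0)$ is a one-dimensional injectively immersed $\mathcal{C}^1$ curve in the two-dimensional center leaf $W^c(0)$, arc-connected by Lemma \ref{AC invariant homotopy}. These curves form a lamination (in fact a foliation) $\mathcal{L}$ of $W^c(0)$. Since $0$ is a fixed point, $F$ maps $W^c(0)$ to itself and preserves $\mathcal{L}$. Moreover $\pi^{su}$ and the maps $\hat T_n$ preserve accessibility classes, so every $\hat T_n$ preserves $\mathcal{L}$ as well.

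The plan is to derive a contradiction by looking at the dynamics near the fixed point $0$. Write $f_c=F|_{W^c(0)}$ in the chart $\sigma^c_0$. Its derivative at $0$ is $DF(0)|_{E^c_F(0)}$, which is $\mathcal{C}^1$-close to $A|_{E^c}$, a rotation by an irrational angle $\theta$. Irrationality of $\theta/2\pi$ follows from ergodicity of $A$, which forbids roots of unity as eigenvalues. The leaf $\ell_0$ of $\mathcal{L}$ through $0$ is $F$-invariant, since $F(AC(0))=AC(F(0))=AC(0)$. Its tangent line $T_0\ell_0$ must therefore be invariant under $D_0F|_{E^c_F(0)}$.

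The key step is to show that $D_0F|_{E^c_F(0)}$ has no real eigendirection, i.e.\ that its eigenvalues stay non-real for perturbations. For $f$ close to $A$ the eigenvalues of this $2\times2$ block are close to $e^{\pm i\theta}$ with $\theta\notin\pi\mathbb{Z}$. They are therefore non-real, and no invariant line exists. Because $\ell_0$ is a $\mathcal{C}^1$ curve through $0$, it has a well-defined tangent line at $0$, which would have to be invariant. This gives the contradiction.

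The main obstacle is justifying that $\ell_0$ really is a $\mathcal{C}^1$ immersed curve with a tangent line at $0$; a priori $0$ might be an endpoint or the curve might spiral. Theorem B of \cite{rodriguez2017structure} guarantees that $AC(0)$ is an injectively immersed $\mathcal{C}^1$ submanifold of dimension $N-1$. Intersecting it with $W^c(0)$ is transversal, because $AC(0)$ contains the $su$-directions, which are complementary to $E^c$. Hence $\ell_0$ is a $\mathcal{C}^1$ one-dimensional immersed submanifold near $0$, and $T_0\ell_0=T_0AC(0)\cap E^c_F(0)$ is a line. If this transversality argument turned out to be delicate, the fallback would be to work directly with the $DF$-invariant hyperplane $T_0AC(0)\supset E^s_F(0)\oplus E^u_F(0)$. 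Its intersection with $E^c_F(0)$ is then a $D_0F$-invariant line in the center, contradicting the non-real center eigenvalues just as before.
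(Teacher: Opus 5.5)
Your proposal is correct and is essentially the paper's argument. The paper also uses that $AC(0)\cap W^c(0)$ is an $F$-invariant one-dimensional set through the fixed point $0$, so its tangent line would be a $D_0F$-invariant line in $T_0W^c(0)$; this is impossible because $D_0F$ has no real eigenvectors in a cone around $E^c$ containing $T_0W^c(0)$ (your non-real eigenvalues of the center block express the same fact). Your transversality and invariant-hyperplane justification of that tangent line is a useful detail the paper leaves implicit, whereas the lamination structure, the maps $\hat T_n$, and the appeal to the preceding corollary are not needed.
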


\begin{proof}
    Since $E^c$ contains no real eigenvector of $A$, if $f$ is sufficiently close to $A$, $D_0F$ is close enough to $A$ (in the space of normed linear operators) to guarantee that there are no real eigenvectors of $D_0F$ in a suitable open cone $U$ around $E^c$. Again, if $f$ is sufficiently close to $A$, $T_0W^c(0)$ is so close to $E^c$ to be contained in $U$. Therefore, no line of $T_0W^c(0)$ can be invariant under $D_0F$ contradicting the fact that $W^c(0)\cap AC(0)$ is $F$-invariant and has dimension 1.\newline
\end{proof}

We can now conclude the proof of Theorem \ref{stable ergodicity}.

\begin{itemize}
    \item If $\text{codim}(AC(x))=0$ for every $x$, all accessibility classes are open and by connectedness there is only one of them. Being $f$ accessible, it is also ergodic by Burns Wilkinson theorem.

    \item If $\text{codim}(AC(x))=2$ for every $x$, one can work exactly as in section $6$ of \cite{Hertz2005}.\newline
    We just describe the general strategy. The joint integrability $\mathcal{F}^s$ and $\mathcal{F}^u$ imply that $T_n\circ T_m=T_{n+m}$, i.e. $(T_n)_n$ induces a $\mathbb{Z}^N$ action on $E^c$.  Lemma \ref{Tn close to linear} says that the maps $T_n$ are close to traslations in $\mathcal{C}^{22}$ and lemmas \ref{Diphantine condition 1} and \ref{Diphantine condition 2} guarantee that for suitable choices of $n$ these translations are Diophantine. By KAM theory it is possible to linearize this whole $\mathbb{Z}^N$ action. This implies a $\mathcal{C}^1$ conjugacy between the stable and unstable foliations of $f$ and $A$. This implies that the essential accessibilty of A is inherited by $f$, which, in turn, implies the ergodicity.
\end{itemize}

\newpage

\appendix

\section{$U$ is simply connected}\label{Simply connected proof}


Recall the map $\pi^{su}:\mathbb{R}^N\rightarrow W^c(0)$, from subsection \ref{Invariant Manifolds and Holonomies}. It is a fibration (lemma 4.3 in \cite{Hertz2005}), and, as a consequence, we have the following (lemma 4.4 in \cite{Hertz2005}).

\begin{lemma}
    Given any open and connected su-saturated set $E$,
    \begin{equation*}
        \pi_1 (E) = \pi_1 (E \cap W^c(0)).
    \end{equation*}
\end{lemma}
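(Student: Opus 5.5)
We want to show that $\pi_1(E)\cong\pi_1(E\cap W^c(0))$ for any open, connected, $su$-saturated $E\subset\mathbb{R}^N$. The plan is to exhibit $E$ as homotopy equivalent to $E\cap W^c(0)$ through a deformation retraction. This retraction is built out of the fibration structure of $\pi^{su}:\mathbb{R}^N\rightarrow W^c(0)$ (lemma 4.3 in \cite{Hertz2005}), whose fibres are contractible. Since $E$ is $su$-saturated and $\pi^{su}(z)\in AC(z)$, we have $\pi^{su}(E)\subset E\cap W^c(0)$. Moreover $E=(\pi^{su})^{-1}(E\cap W^c(0))$: if $\pi^{su}(z)\in E$, then $z\in AC(\pi^{su}(z))\subset E$. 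So $E$ is exactly the restriction of the fibration to the open subset $E\cap W^c(0)$ of the base, and in particular it is again a fibration.

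The key steps, in order, are the following.

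\emph{Step 1: the fibres are contractible.} The fibre over $p\in W^c(0)$ is $(\pi^{su})^{-1}(p)=\{z:\pi^s(\pi^u(z))=p\}$. Using Lemma \ref{unique intersection}, the point $\pi^u(z)$ ranges over $W^s(p)\cap W^{cs}(0)=W^s(p)$, and $z$ then ranges over $W^u$ of that point. Hence the fibre is $W^u(W^s(p))$. In the chart $\Phi_p$ of Lemma \ref{homomorphism} it corresponds to $\{v:v^c=0\}=E^{su}$, so it is homeomorphic to a Euclidean space.

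\emph{Step 2: a deformation retraction.} I would use the explicit straight-line contraction in these coordinates:
$$r_t(z)=\sigma^u\big((1-t)a,\ \sigma^s((1-t)b,\ \pi^{su}(z))\big),$$
where $(a,b)\in E^u\times E^s$ are the coordinates of $z$ in the fibre. For $t=0$ this gives $z$, and for $t=1$ it gives $\pi^{su}(z)$. Every point $r_t(z)$ lies on an $su$-path from $\pi^{su}(z)$, so $r_t(E)\subset E$. The retraction fixes $E\cap W^c(0)$ pointwise, because for such $z$ the coordinates $(a,b)$ vanish.

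\emph{Step 3: conclusion.} The inclusion $E\cap W^c(0)\hookrightarrow E$ is therefore a homotopy equivalence, which gives the isomorphism on $\pi_1$ (basepoints taken in $W^c(0)$). Alternatively, the long exact sequence of the fibration with contractible fibre gives the same conclusion.

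The main obstacle is the continuity of $(z,t)\mapsto r_t(z)$. This requires the coordinates $(a,b)$ to depend continuously on $z$. That amounts to continuity of $\Phi_x^{-1}$ jointly in $(x,v)$, whereas Lemma \ref{homomorphism} gives it only for fixed $x$. I would get joint continuity by rerunning the properness argument on the map $(x,v)\mapsto(x,\Phi_x(v))$ on $\mathbb{R}^N\times\mathbb{R}^N$. This map is a continuous bijection, and it is proper by the uniform estimate of Lemma \ref{Phi close to identity}. The general topology lemma after Lemma \ref{homomorphism} then makes its inverse continuous. The joint continuity of the parametrizations $\sigma^*(x,v)$ themselves follows from the continuity of the foliations.
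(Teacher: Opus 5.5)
Your proposal is correct and follows the same route as the paper. The paper simply invokes Lemmas 4.3--4.4 of \cite{Hertz2005}: $\pi^{su}$ is a fibration with contractible fibres $W^u(W^s(p))$, an $su$-saturated open $E$ equals $(\pi^{su})^{-1}(E\cap W^c(0))$, and your explicit retraction (or the long exact sequence) spells this out. The continuity obstacle you flag is lighter than you suggest, because your fibre coordinates $(a,b)$ are exactly the $E^u$- and $E^s$-components of $\Phi_0^{-1}(z)$, and $\pi^{su}(z)=\sigma^c_0\big((\Phi_0^{-1}(z))^c\big)$. Hence $r_t(z)=\Phi_0\big((\Phi_0^{-1}z)^c+(1-t)(\Phi_0^{-1}z)^{su}\big)$, and Lemma \ref{homomorphism} at the single point $x=0$ already suffices, with no need for joint continuity in $x$ (indeed $\Phi_0$ trivializes $E$ as $(E\cap W^c(0))\times\mathbb{R}^{N-2}$).
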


The proof then reduces to showing that $U\cap W^c(0)$ is simply connected. We want to exploit the following algebraic topolgy lemma.

\begin{lemma}
    Let $E\subset\mathbb{R}^2$ be a connected open set such that all connected components of $\mathbb{R}^2\setminus E$ are unbounded. Then $E$ is simply connected.
\end{lemma}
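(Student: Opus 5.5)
Plan: given a loop $\gamma$ in $E$, we show it is null-homotopic in $E$. The tool is the classical fact that, for an open set $E\subset\mathbb{R}^2$, a loop in $E$ is null-homotopic in $E$ if and only if its winding number around every point of $\mathbb{R}^2\setminus E$ vanishes. Equivalently, $\pi_1$ of a planar domain injects into $H_1$, and $H_1$ is detected by winding numbers. So the proof comes down to two steps.

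First step: every winding number vanishes. Fix a loop $\gamma$ in $E$ and a point $p\in\mathbb{R}^2\setminus E$, and let $K$ be the component of $\mathbb{R}^2\setminus E$ containing $p$. The function $q\mapsto \mathrm{wind}(\gamma,q)$ is locally constant on $\mathbb{R}^2\setminus\mathrm{Im}(\gamma)$. Since $\mathrm{Im}(\gamma)\subset E$, the set $K$ is a connected subset of $\mathbb{R}^2\setminus\mathrm{Im}(\gamma)$, so the winding number is constant on $K$. Because $K$ is unbounded, it contains points outside a large disc containing $\mathrm{Im}(\gamma)$, and there the winding number is $0$. Hence $\mathrm{wind}(\gamma,p)=0$.

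Second step: vanishing winding numbers imply null-homotopy. I would avoid quoting the general $\pi_1\hookrightarrow H_1$ statement and argue more concretely. Replace $\gamma$ by a homotopic polygonal loop in $E$, then cover $\mathrm{Im}(\gamma)$ by a fine grid of closed squares contained in $E$. Let $Q$ be the union of the squares of the grid, inside a large box $B\supset\mathrm{Im}(\gamma)$, that meet $\mathrm{Im}(\gamma)$ or on which the winding number is nonzero; these squares lie in $E$ by the first step, provided the mesh is small compared with $\mathrm{dist}(\mathrm{Im}(\gamma),\mathbb{R}^2\setminus E)$.

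The main obstacle is to show that $\gamma$ is null-homotopic in the interior of a finite union of squares contained in $E$. Here we use that the bounded complementary components of a compact polyhedron all have $\mathrm{wind}\neq 0$ or lie in $E$. The cleaner route is to define $Q'$ as the union of $Q$ with all bounded components of $\mathbb{R}^2\setminus Q$. Then $Q'$ is a compact polyhedron with connected complement, so its interior components are simply connected (a planar polygonal region without holes is a disc, by the Jordan–Schoenflies theorem for polygons). It remains to check that $Q'\subset E$. A bounded component $D$ of $\mathbb{R}^2\setminus Q$ with a point of $\mathbb{R}^2\setminus E$ would contain that point's whole component $K$, since $K$ misses $Q$ and is connected; but $K$ is unbounded, which is a contradiction. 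Hence $\gamma$ lies in a simply connected open subset of $E$ (after a slight thickening of $Q'$ inside $E$), so it is null-homotopic there. Thus $E$ is simply connected.

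In fact the first step then becomes unnecessary: only the unboundedness argument, applied to the filled polyhedron $Q'$, is used. The final proof therefore consists of the grid construction, the filling of holes, the use of unboundedness of complementary components, and the Schoenflies theorem for polygons.
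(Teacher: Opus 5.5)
Your argument is correct, but it is organized differently from the paper's.

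The paper argues by contradiction. It takes a non-contractible Jordan curve $\gamma$ in $E$ and lets $D$ be its bounded Jordan domain. If $\overline D\subset E$, then $\gamma$ is contractible; otherwise a component of $\mathbb{R}^2\setminus E$ is trapped inside $D$, contradicting unboundedness.

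You instead start from an arbitrary loop, pass to a polygonal loop, take the grid squares $Q$ meeting it, and fill in the bounded components of $\mathbb{R}^2\setminus Q$. The same trapping mechanism then shows $Q'\subset E$: a component $K$ of $\mathbb{R}^2\setminus E$ misses $Q$, so it lies in a single component of $\mathbb{R}^2\setminus Q$.

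\textbf{What your route buys.} The use of the hypothesis becomes a clean statement about a finite polyhedron. You also never need a non-contractible \emph{simple} curve. The paper takes that for granted, but for a general open set it requires a polygonal approximation plus a splitting-at-self-intersections argument.

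\textbf{What it costs.} All the topology moves into your claim that the interior components of $Q'$ are simply connected. This is exactly the polygonal case of the lemma, and it needs two things:
\begin{enumerate}
\item Each component $W$ of $\mathrm{int}\,Q'$ has connected complement. This is true but should be said. Every component of $\mathrm{int}\,Q'$ has nonempty boundary contained in $\partial Q'\subset\overline{\mathbb{R}^2\setminus Q'}$, so every other component attaches to the closure of the unbounded component.
\item Then you must either show that $\partial W$ is a single polygonal Jordan curve, or reduce to simple polygonal loops and apply Jordan--Schoenflies for polygons. The latter is essentially the paper's argument.
\end{enumerate}
So both proofs rest on the same classical inputs, just distributed differently.

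Two small remarks. No thickening of $Q'$ is needed: every point of $\mathrm{Im}(\gamma)$ is surrounded by grid squares that meet $\mathrm{Im}(\gamma)$, so $\mathrm{Im}(\gamma)\subset\mathrm{int}\,Q\subset\mathrm{int}\,Q'$. Moreover, a thickened set would need a different simple-connectivity argument from the one you give for interior components. Finally, as you noticed yourself, the winding-number step can be dropped.
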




\begin{proof}
    Suppose that $E$ is not simply connected. Since $E$ is an open subset of $\mathbb{R}^2$ we can find  Jordan curve $\gamma:[0,1]\rightarrow E$, which is non-contractible in $E$.
    By Jordan curve theorem there exists a disk $D\subset\mathbb{R}^2$ such that $\Im(\gamma)=\partial D$. If $D\subset E$, then $\gamma$ would be contractible in $E$. Hence a point of $D$ must belong to $\mathbb{R}^2\setminus E$, but then its connected component must be contained in $D$ and therefore bounded. This is the desired contradiction. \newline
\end{proof}

Let us consider $C=E^c\setminus (\sigma_0^c)^{-1}(U)$. Recalling that $\sigma_0^c$ is a bi-Lipschitz homeomorphism and identifying $E^c$ with $\mathbb{R}^2$, in view of the previous lemma, to conclude the proof we only need to prove that all connected components of $C$ are unbounded.

\begin{lemma}
    For any $ x\in E^c$ and $\delta>0$ there exist $n\in\Lambda$ with $U+n=U$, $k\in\mathbb{N}$, and curves $\eta_i:[0,1]\rightarrow E^c$, for $i=1,\dots,k$, such that, calling $\hat\eta_i=\sigma_0^c\circ\eta_i$ and $\hat x=\sigma_0^c( x)$:
    \begin{enumerate}
        \item $\hat\eta_i([0,1]) \subset (AC(\hat x)+\Lambda)\cap W^c(0)$;
        \item $|\eta_i(0)-T_{(i-1)n}( x)|<\delta$ and $\hat\eta_i(1) = T_{in}(\hat x)$;
        \item $|T_{kn}(x)- x|\rightarrow +\infty$, when $\delta\rightarrow 0$.
    \end{enumerate}
\end{lemma}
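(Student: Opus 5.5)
The plan is to adapt Rodriguez-Hertz's Lemma 4.6, replacing his box by the set $W_{\varepsilon}(x)$ and using Lemma \ref{n epsilon} to produce the translation $n$. Fix $x\in E^c$ and $\delta>0$, and choose $\varepsilon$ small with respect to $\delta$. We may assume $\hat x$ lies in the closure of $U$, or more simply work with any point, since only the structure of accessibility classes and $\Lambda$-translates is used.

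\textbf{Step 1: produce $n$ and the first curve.} Apply Lemma \ref{n epsilon} at the point $\hat x$. This gives $n=n_{\varepsilon}\in\Lambda$ with $|n|\le 5(1+\kappa)L(\varepsilon)$ and a point $p\in W_{\varepsilon}(\hat x)\cap(W_{\varepsilon}(\hat x)+n)$. If a small ball around $\hat x$ lies in $U$, the same lemma also gives $U+n=U$. Otherwise I would apply the lemma at a point of $U$ nearby and transport the conclusion, which requires a little care.

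Unwinding the definition of $W_{\varepsilon}$, $p$ is reached from some $a\in W^c_{\varepsilon}(\hat x)$ by an $su$-path of three legs with lengths at most $L(\varepsilon)+\varepsilon$. Likewise $p-n$ is reached from some $b\in W^c_{\varepsilon}(\hat x)$. Hence $a$ and $b+n$ lie in the same accessibility class, with an $su$-path of at most six legs and length $O(L(\varepsilon))$ joining them.

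Now slide the starting point along $W^c_{\varepsilon}(\hat x)$ and project with $\pi^{su}$, following the homotopy idea of Lemma \ref{AC invariant homotopy}. The curve $t\mapsto \pi^{su}(h^{\gamma|_{[0,t]}}(\cdot))$, concatenated with a short center arc, gives a curve inside $(AC(\hat x)+\Lambda)\cap W^c(0)$. It joins a point within $\text{Lip}\cdot\varepsilon$ of $\hat x$ to $\hat T_n(\hat x)$. By Lemma \ref{Holonomies Lip} the Lipschitz constant is at most $C^6L(\varepsilon)^{6\beta}=C^6\varepsilon^{-12\beta}$. So for $\beta$ small the starting error is $\lesssim\varepsilon^{1-12\beta}<\delta$. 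This is $\eta_1$.

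\textbf{Step 2: iterate.} For $\eta_i$, translate the construction by $(i-1)n$ and push it along the holonomy $\hat T_{(i-1)n}$. Since accessibility classes are invariant under $\Lambda$-translation up to the $+\Lambda$ in item 1, the image curves stay in $(AC(\hat x)+\Lambda)\cap W^c(0)$.

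Notice that $T_{in}$ need not equal $T_n^i$. I would therefore define $\eta_i$ by applying the same sliding/holonomy construction based at $T_{(i-1)n}(x)$ rather than at $x$, i.e. rerun Step 1 at that base point with the same $n$. The bound in Lemma \ref{n epsilon} on $|n|$ is uniform in the base point, so $\delta$-closeness at the start of each piece holds uniformly.

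\textbf{Step 3: divergence (item 3).} By Lemma \ref{Tn log}, $|T_{kn}(x)-x|\ge k|n^c|-C\log^+(k|n|)-C$. Lemma \ref{Diphantine center} gives $|n^c|\ge c'|n|^{-r}\gtrsim L(\varepsilon)^{-r}$. Choosing $k\approx L(\varepsilon)^{r+1}$ makes the linear term grow like $L(\varepsilon)\to\infty$ while the log term stays $O(\log L(\varepsilon))$. Hence $|T_{kn}(x)-x|\to\infty$ as $\delta\to0$.

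\textbf{Main obstacle.} The delicate point is Step 2's uniformity combined with the error control in Step 1. The accumulated Lipschitz distortion of the holonomies must not destroy the $\delta$-closeness. This is exactly where the difference between $W_{\varepsilon}$ and Rodriguez-Hertz's set matters, and it forces the smallness of $\beta$ in terms of the exponents $12\beta<1$.
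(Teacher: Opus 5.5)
Your Step 1 is essentially the paper's construction of $\eta_1$, and your Step 3 matches the paper's divergence estimate. Step 2, however, has a genuine gap. Re-running Step 1 at the base point $\hat T_{(i-1)n}(\hat x)$ ``with the same $n$'' is not justified. The vector given by Lemma \ref{n epsilon} depends on the base point; the uniform bound on $|n|$ does not make $n$ itself uniform. By $\mathbb{Z}^N$-equivariance you only know $W_\varepsilon(z)\cap(W_\varepsilon(z)+n)\neq\emptyset$ for $z\in\hat x+\mathbb{Z}^N$. The point $\hat T_{(i-1)n}(\hat x)=\pi^{su}(\hat x+(i-1)n)$ merely lies in the same accessibility class as such a point, and $W_\varepsilon$ is not $su$-saturated. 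Even granting the overlap, that curve would end at $\hat T_n(\hat T_{(i-1)n}(\hat x))$, not at $\hat T_{in}(\hat x)$ as item 2 requires. Lemma \ref{Tn log} only bounds the distance between these two points by $O(\log(1/\varepsilon))$, so consecutive pieces need not link up. Switching to the orbit $\hat T_n^{\,i}(\hat x)$ does not help, because item 3 would no longer follow: the logarithmic errors would accumulate $k$ times. Your first suggestion, translating by $(i-1)n$ and pushing by $\hat T_{(i-1)n}$, has the same endpoint defect.

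The missing idea is that only the starting point needs metric control; the endpoint can be reached exactly through the accessibility class. Let $\hat S$ be the holonomy along the $su$-path from the overlap, and set $\hat y_i=\hat T_{(i-1)n}(\hat S(\hat x+n))$. Since $\hat S(\hat x+n)\in AC(\hat x+n)$ and $\hat T_m(\hat z)\in AC(\hat z+m)$, equivariance gives $\hat y_i\in AC(\hat x+in)$. So for any $su$-path $\tilde\eta_i$ from $\hat y_i$ to $\hat x+in$, the curve $\hat\eta_i=\pi^{su}\circ\tilde\eta_i$ lies in $AC(\hat x+in)\cap W^c(0)$, starts at $\hat y_i$, and ends exactly at $\pi^{su}(\hat x+in)=\hat T_{in}(\hat x)$.

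The price is that the start error is multiplied by $\text{Lip}(\hat T_{(i-1)n})\le C(k|n|)^{\beta}$. This factor is polynomially large in $\varepsilon^{-1}$, because item 3 forces $k$ to be. So $12\beta<1$ is not enough: $\beta$ must be small in terms of $r$, and the paper needs $\beta(2r+15)<1$.

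Two smaller points. First, the ``short center arc'' you concatenate leaves the accessibility class and would violate item 1. It is also unnecessary, since item 2 only asks for $\delta$-closeness at the start. Second, your fallback for $U+n=U$ does not work as stated. In the application $\hat x\notin U$, a different base point in general gives a different $n$, and there need not be points of $U$ near $\hat x$. A clean way to get $U+n=U$ is to run the volume argument of Lemma \ref{n epsilon} modulo the full-rank lattice $\Gamma\subset\Lambda$ of $X$ instead of $\Lambda$, so that $n\in\Gamma$.
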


\begin{proof}
    We recall the constants $\beta$ from lemma \ref{Holonomies Lip} and $r$ from lemma \ref{Diphantine center}. We set $s=2r+1$ and $\gamma=1-\beta(s+14)$.\newline
    If $f$ is sufficiently $\mathcal{C}^1$-close to $A$, $\beta$ approaches $0$ and $\gamma$ is positive.\newline
    Given $\delta>0$, we consider $\varepsilon>0$ such that $\varepsilon^{\gamma}<\delta$ (notice that $\varepsilon\rightarrow0$, whenever $\delta\rightarrow0$). We obtain $n=n_{\varepsilon}$ from lemma \ref{n epsilon} (with $|n_\varepsilon|<5(1+\kappa)L(\varepsilon)$).\newline
    Finally we pick $k\in\mathbb{N}$ such that $\varepsilon^{-s}/2<k<\varepsilon^{-s}$.\newline

    We start by proving item 3.\newline
    Exploiting lemma \ref{Tn log} at the first line, and lemma \ref{Diphantine center} at the second one we have the following. (In the next computations we denote all universal constants with $C$, which might assume a diffent value, even within the same line.)
    \begin{align*}
        |T_{kn}( x) - (x + n^c)| &\geq (|kn^c|) - C\log|kn| - C\\
        &\geq k\frac{C}{|n|^r} - C\log(k)  - C\log(|n|) - C\\
        &\geq C \frac{\varepsilon^{-s}}{\varepsilon^{2r}} - sC\log(\varepsilon^{-1}) - 2C\log(\varepsilon^{-1})-2C\log(5C) -C\\
        &\geq C \varepsilon^{-1} - (2C + sC)\log(\varepsilon^{-1}) - C\\
        &\rightarrow+\infty.
    \end{align*}

    In order to construct the curves $\eta_i$, we first construct their starting points $\eta_i(0)= y_i$. We aim to find points $ y_i\in E^c$ such that, again denoting $\hat y_i=\sigma_0^c( y_i)$:
    \begin{itemize}
        \item[a)] $|T_{(i-1)n}( x)- y_i|<\delta$;
        \item [b)] $ \hat y_i\in AC ( \hat x+in)$.\newline
    \end{itemize}

    Recalling that, $W_{\varepsilon}(\hat x+n) = W_{\varepsilon}(\hat x)+n$, by lemma \ref{n epsilon}, we have that $W_{\varepsilon}(\hat x +n ) \cap W_{\varepsilon}(\hat x)\neq\emptyset$. This implies the existence of a 5-legged $su$-path $\gamma:[0,1]\rightarrow \mathbb{R}^N$ such that $\gamma(0)\in W_{\varepsilon}^c(\hat x + n)\subset W^c(n) $ and $\gamma(1)\in W_{\varepsilon}^c(\hat x)\subset W^c(0)$.\newline
    We call $\hat S=h^{\gamma}:W^c(n)\rightarrow W^c(0)$ the $su$-holonomy along $\gamma$. Notice that $\gamma$ is shorter that $10L(\varepsilon)$ and consequently, by lemma \ref{Holonomies Lip}, $\text{Lip}(\hat S)\leq C(L(\varepsilon))^{5\beta}$.\newline
    We call $\hat T_{m} = \sigma_0^c\circ T_m \circ (\sigma_0^c)^{-1}: W^c(0)\rightarrow W^c(0)$. By lemma \ref{Holonomies Lip}, $\text{Lip}(\hat T_m)\leq C|m|^{\beta}$.\newline

    We define $\hat y_i= \hat T_{(i-1)n}\circ \hat S(\hat x +n)$.\newline

    By definition of $\hat S$, we have that $\hat S (\hat x +n)\in AC(\hat x +n)$. Moreover, for any $\hat z \in W^c(0)$ and any $m\in\mathbb{Z}^N$, we have that $\hat T_m(\hat z) \in AC(\hat z + m)$. We conclude that $\hat y_i= \hat T_{(i-1)n}\circ \hat S(\hat x +n)\in AC(\hat x + (i-1)n +n)=AC(\hat x +in)$, satisfying the second condition we required on $ y_i$.\newline

    We now focus on the first condition on $y_i$.\newline
    Indeed, for $\varepsilon$ small enough, we have the following computation.

    \begin{align*}
        |y_i-T_{(i-1)n}(x)| & \leq |\hat y_i-\hat T_{(i-1)n}(\hat x)|\\
        & \leq \text{Lip}(\hat T_{(i-1)n}) | \hat S (\hat x +n) - \hat x|\\
        & \leq \text{Lip}(\hat T_{(i-1)n}) (| \hat S (\hat x +n) - \hat S(\gamma(0))| + | \gamma(1) - \hat x|)\\
        & \leq \text{Lip}(\hat T_{(i-1)n}) (\text{Lip}(\hat S)\varepsilon + C\varepsilon)\\
        & \leq C |kn|^{\beta} L(\varepsilon)^{5\beta} \varepsilon\\
        & \leq C \varepsilon^{-s\beta}\varepsilon^{-2\beta}\varepsilon^{-10\beta}\varepsilon\\
        &\leq \varepsilon^{1-\beta(14+s)} = \varepsilon^{\gamma} < \delta.
    \end{align*}

Now we can build the curves $\hat \eta_i$.\newline
Since $\hat y_i\in AC(\hat x + in)$, there exists a $su$-curve $\tilde \eta_i$, such that $\tilde\eta_i(0)=y_i$ and $\tilde\eta_i(1)=\hat x +in$. Of course, $\tilde \eta([0,1)]\subset AC(\hat x+in)$.\newline
We set $\hat \eta_i = \pi^{su} \circ \tilde \eta_i$.\newline
Since we have that $\pi^{su}(\mathbb{R}^N)=W^c(0)$ and $\pi^{su}$ preserves the accessibility classes, $\hat\eta_i$ satisfies item 1.\newline 
Moreover, we have that $\hat\eta_i(1)=\pi^{su}(\tilde\eta_i(1))=\pi^{su}(\hat x +in)=\hat T_{in}(\hat x)$, hence $\eta_i(1)=T_{in}(x)$. Finally, since $\hat y_i\in W^c(0)$, we have that $\hat\eta_i(0)= \pi^{su}(\tilde\eta_i(0))=\pi^{su}(\hat y_i)=\hat y_i$, and, therefore, $\eta_i(1)=y_i$ and $\eta_i$ satisfies item 2.\newline
\end{proof}

We can now conclude the proof following \cite{Hertz2005} (see the proof of Corollary 4.7).\newline
Let $x$ be a point in $C$. Suppose, by contradiction that its component of $C$ is contained in $B(x,R)$ for some $R>0$. For $\delta>0$ we apply the previous lemma to $x$.\newline
Since $U$ is $n$-invariant, $T_{in}(x)\in C$ for every $i=0,\dots,k$. In addidion, the $su$-saturation of $U$ implies that $\eta_i([0,1])\subset C$ for $i=0,\dots,k$.\newline
We define
\begin{equation*}
    \tilde K_\delta = \bigcup_{i=0}^k \overline{B(T_{in}(x),\delta)} \cup \bigcup_{i=1}^k \eta_i([0,1]),
\end{equation*}
and $K_{\delta}$ to be the connected component of $\tilde K_{\delta}\cap \overline{B(x,R)}$ that contains $x$.\newline
Since $\tilde K_{\delta}$ is connected and $|T_{kn}(x)-x|>R$ for $\delta$ small enough, we have that
\begin{equation*}
    K_{\delta}\cap \partial B(x,R)\neq \emptyset \quad \text{and that} \quad K_{\delta} \subset C+\overline{B(0,\delta)}.
\end{equation*}
By the compactness of the space of the compact subsets of $\overline{B(x,R)}$ with respect to the Hausdorff distance, we have the existence of a sequence $\delta_m\rightarrow 0$ and a compact subset $K$ of $\overline{B(x,R)}$ such that $K_{\delta_n}\rightarrow K$ in the Hausdorff metric.\newline
Passing to the limit, $K$ must be connected, contain $x$ and intersect $\partial B(x,R)$. At the same time we have that $K\subset C +\overline{B(0,\delta)}$ for every $\delta>0$, implying that $K\subset C$, in contradiction with our initial assumption.

\newpage

\printbibliography

@article{Hertz2005,
title = "Stable ergodicity of certain linear automorphisms of the torus",
author = "Hertz, Federico Rodriguez",
year = "2005",
month = jul,
doi = "10.4007/annals.2005.162.65",
language = "English (US)",
volume = "162",
pages = "65--107",
journal = "Annals of Mathematics",
issn = "0003-486X",
publisher = "Department of Mathematics at Princeton University",
number = "1",
}

@article{Greiter1978,
    author = "Greiter, G.",
    title = "A Simple Proof for a Theorem of Kronecker",
    journal = "The American Mathematical Monthly",
    volume = "85(9)",
    pages = "756–-757",
    year = "1978",
    doi = "10.1080/00029890.1978.11994694",
}

@article{rodriguez2017structure,
  title={Structure of accessibility classes},
  author={Rodriguez-Hertz, Jana and V{\'a}squez, Carlos H},
  journal={arXiv preprint arXiv:1706.01156},
  year={2017}
}

@article{burns2010ergodicity,
  title={On the ergodicity of partially hyperbolic systems},
  author={Burns, Keith and Wilkinson, Amie},
  journal={Annals of Mathematics},
  pages={451--489},
  year={2010},
  publisher={JSTOR}
}

@article{halmos1943automorphisms,
  title={On automorphisms of compact groups},
  author={Halmos, Paul R},
  year={1943}
}

@article{brin1974partially,
  title={Partially hyperbolic dynamical systems},
  author={Brin, Michael I and Pesin, Ja B},
  journal={Mathematics of the USSR-Izvestiya},
  volume={8},
  number={1},
  pages={177--218},
  year={1974}
}

@book{hirsch1977invariant,
    author={Hirsch, Morris W and Pugh, Charles Chapman and Shub, Michael},
    title={Invariant manifolds},
    publisher = {Springer Berlin, Heidelberg},
    year = {1977}
}

@inproceedings{ansov1969geodesic,
  title={Geodesic flows on closed Riemannian manifolds with negative curvature},
  author={Ansov, DV},
  booktitle={Proc. Steklov Inst. Math.},
  volume={90},
  pages={1--235},
  year={1969}
}

@article{lind1982dynamical,
  title={Dynamical properties of quasihyperbolic toral automorphisms},
  author={Lind, Douglas A},
  journal={Ergodic Theory and Dynamical Systems},
  volume={2},
  number={1},
  pages={49--68},
  year={1982},
  publisher={Cambridge University Press}
}

@article{katzenlson1971ergodic,
  title={Ergodic automorphisms of T n are Bernoulli shifts},
  author={Katzenlson, Yitzhak},
  journal={Israel Journal of Mathematics},
  volume={10},
  number={2},
  pages={186--195},
  year={1971},
  publisher={Springer}
}

@book{adler1970similarity,
  title={Similarity of automorphisms of the torus},
  author={Adler, Roy L and Weiss, Benjamin},
  volume={98},
  year={1970},
  publisher={American Mathematical Society Providence, RI}
}

@article{adler1967entropy,
  title={Entropy, a complete metric invariant for automorphisms of the torus},
  author={Adler, Roy L and Weiss, Benjamin},
  journal={Proceedings of the National Academy of Sciences},
  volume={57},
  number={6},
  pages={1573--1576},
  year={1967}
}

@article{le1999limit,
  title={Limit theorems for non-hyperbolic automorphisms of the torus},
  author={Le Borgne, St{\'e}phane},
  journal={Israel Journal of Mathematics},
  volume={109},
  number={1},
  pages={61--73},
  year={1999},
  publisher={Springer}
}

@article{dedecker2012empirical,
  title={Empirical central limit theorems for ergodic automorphisms of the torus},
  author={Dedecker, J{\'e}r{\^o}me and Merlev{\`e}de, Florence and P{\`e}ne, Fran{\c{c}}oise},
  journal={arXiv preprint arXiv:1210.3546},
  year={2012}
}

@article {MR4198639,
    AUTHOR = {Avila, A. and Crovisier, S. and Wilkinson, A.},
     TITLE = {{$C^1$} density of stable ergodicity},
   JOURNAL = {Adv. Math.},
  FJOURNAL = {Advances in Mathematics},
    VOLUME = {379},
      YEAR = {2021},
     PAGES = {Paper No. 107496, 68},
      ISSN = {0001-8708,1090-2082},
   MRCLASS = {37C20 (37C40)},
  MRNUMBER = {4198639},
MRREVIEWER = {Luciana\ Silva\ Salgado},
       DOI = {10.1016/j.aim.2020.107496},
       URL = {https://doi.org/10.1016/j.aim.2020.107496},
}

@article {MR1298715,
    AUTHOR = {Grayson, Matthew and Pugh, Charles and Shub, Michael},
     TITLE = {Stably ergodic diffeomorphisms},
   JOURNAL = {Ann. of Math. (2)},
  FJOURNAL = {Annals of Mathematics. Second Series},
    VOLUME = {140},
      YEAR = {1994},
    NUMBER = {2},
     PAGES = {295--329},
      ISSN = {0003-486X,1939-8980},
   MRCLASS = {58F11 (58F17 58F30)},
  MRNUMBER = {1298715},
MRREVIEWER = {Remo\ Badii},
       DOI = {10.2307/2118602},
       URL = {https://doi.org/10.2307/2118602},
}

@article {MR1449765,
    AUTHOR = {Pugh, Charles and Shub, Michael},
     TITLE = {Stably ergodic dynamical systems and partial hyperbolicity},
   JOURNAL = {J. Complexity},
  FJOURNAL = {Journal of Complexity},
    VOLUME = {13},
      YEAR = {1997},
    NUMBER = {1},
     PAGES = {125--179},
      ISSN = {0885-064X,1090-2708},
   MRCLASS = {58F11 (58F15)},
  MRNUMBER = {1449765},
MRREVIEWER = {Christian\ Bonatti},
       DOI = {10.1006/jcom.1997.0437},
       URL = {https://doi.org/10.1006/jcom.1997.0437},
}

@article {MR1464,
    AUTHOR = {Hopf, Eberhard},
     TITLE = {Statistik der geod\"atischen {L}inien in {M}annigfaltigkeiten
              negativer {K}r\"ummung},
   JOURNAL = {Ber. Verh. S\"achs. Akad. Wiss. Leipzig Math.-Phys. Kl.},
  FJOURNAL = {Berichte \"uber die Verhandlungen der S\"achsischen Akademie
              der Wissenschaften zu Leipzig. Mathematisch-Physische Klasse},
    VOLUME = {91},
      YEAR = {1939},
     PAGES = {261--304},
      ISSN = {0366-0036},
   MRCLASS = {46.3X},
  MRNUMBER = {1464},
MRREVIEWER = {Gustav\ A.\ Hedlund},
}

@article {MR2018925,
    AUTHOR = {Bonatti, C. and D\'iaz, L. J. and Pujals, E. R.},
     TITLE = {A {$C^1$}-generic dichotomy for diffeomorphisms: weak forms of
              hyperbolicity or infinitely many sinks or sources},
   JOURNAL = {Ann. of Math. (2)},
  FJOURNAL = {Annals of Mathematics. Second Series},
    VOLUME = {158},
      YEAR = {2003},
    NUMBER = {2},
     PAGES = {355--418},
      ISSN = {0003-486X,1939-8980},
   MRCLASS = {37D30 (37C20)},
  MRNUMBER = {2018925},
MRREVIEWER = {Maria\ Jos\'e\ Pacifico},
       DOI = {10.4007/annals.2003.158.355},
       URL = {https://doi.org/10.4007/annals.2003.158.355},
}

@article {MR2085722,
    AUTHOR = {Tahzibi, Ali},
     TITLE = {Stably ergodic diffeomorphisms which are not partially
              hyperbolic},
   JOURNAL = {Israel J. Math.},
  FJOURNAL = {Israel Journal of Mathematics},
    VOLUME = {142},
      YEAR = {2004},
     PAGES = {315--344},
      ISSN = {0021-2172,1565-8511},
   MRCLASS = {37C40 (37C05 37C20 37D25)},
  MRNUMBER = {2085722},
MRREVIEWER = {Viorel\ Ni\c tic\u a},
       DOI = {10.1007/BF02771539},
       URL = {https://doi.org/10.1007/BF02771539},
}

@incollection {MR2039999,
    AUTHOR = {Dolgopyat, Dmitry and Wilkinson, Amie},
     TITLE = {Stable accessibility is {$C^1$} dense},
      NOTE = {Geometric methods in dynamics. II},
   JOURNAL = {Ast\'erisque},
  FJOURNAL = {Ast\'erisque},
    NUMBER = {287},
      YEAR = {2003},
     PAGES = {xvii, 33--60},
      ISSN = {0303-1179,2492-5926},
   MRCLASS = {37D30 (37C20 37J10)},
  MRNUMBER = {2039999},
MRREVIEWER = {Lorenzo\ J.\ D\'iaz},
}

@incollection {MR4142463,
    AUTHOR = {Avila, Artur and Viana, Marcelo},
     TITLE = {Stable accessibility with 2-dimensional center},
      NOTE = {Some aspects of the theory of dynamical systems: a tribute to
              Jean-Christophe Yoccoz. Vol. II},
   JOURNAL = {Ast\'erisque},
  FJOURNAL = {Ast\'erisque},
    NUMBER = {416},
      YEAR = {2020},
     PAGES = {301--320},
      ISSN = {0303-1179,2492-5926},
      ISBN = {978-2-85629-917-3},
   MRCLASS = {37D30 (37C86)},
  MRNUMBER = {4142463},
MRREVIEWER = {Jana\ Rodriguez\ Hertz},
       DOI = {10.24033/ast},
       URL = {https://doi.org/10.24033/ast},
}

@article{DIDIER_2003, title={Stability of accessibility}, volume={23}, DOI={10.1017/S0143385702001785}, number={6}, journal={Ergodic Theory and Dynamical Systems}, author={DIDIER, Ph.}, year={2003}, pages={1717–1731}}

@book {MR2391330,
     TITLE = {Partially hyperbolic dynamics, laminations, and
              {T}eichm\"uller flow},
    SERIES = {Fields Institute Communications},
    VOLUME = {51},
    EDITOR = {Forni, Giovanni and Lyubich, Mikhail and Pugh, Charles and
              Shub, Michael},
      NOTE = {Papers from the workshop held in Toronto, ON, January 2006},
 PUBLISHER = {American Mathematical Society, Providence, RI; Fields
              Institute for Research in Mathematical Sciences, Toronto, ON},
      YEAR = {2007},
     PAGES = {x+339},
      ISBN = {978-0-8218-4274-4},
   MRCLASS = {37-06 (30-06 32-06 37D30 37D40 57-06)},
  MRNUMBER = {2391330},
       DOI = {10.1090/fic/051},
       URL = {https://doi.org/10.1090/fic/051},
}

@article {MR538680,
    AUTHOR = {Herman, Michael-Robert},
     TITLE = {Sur la conjugaison diff\'erentiable des diff\'eomorphismes du
              cercle \`a{} des rotations},
   JOURNAL = {Inst. Hautes \'Etudes Sci. Publ. Math.},
  FJOURNAL = {Institut des Hautes \'Etudes Scientifiques. Publications
              Math\'ematiques},
    NUMBER = {49},
      YEAR = {1979},
     PAGES = {5--233},
      ISSN = {0073-8301,1618-1913},
   MRCLASS = {58F11 (28D99)},
  MRNUMBER = {538680},
MRREVIEWER = {C.\ S.\ Hartzman},
       URL = {http://www.numdam.org/item?id=PMIHES_1979__49__5_0},
}

@article {MR4422214,
    AUTHOR = {Leguil, Martin and Zhang, Zhiyuan},
     TITLE = {{$C^r$}-prevalence of stable ergodicity for a class of
              partially hyperbolic systems},
   JOURNAL = {J. Eur. Math. Soc. (JEMS)},
  FJOURNAL = {Journal of the European Mathematical Society (JEMS)},
    VOLUME = {24},
      YEAR = {2022},
    NUMBER = {9},
     PAGES = {3379--3438},
      ISSN = {1435-9855,1435-9863},
   MRCLASS = {37D30 (37C20)},
  MRNUMBER = {4422214},
MRREVIEWER = {Xiaodong\ Wang},
       DOI = {10.4171/jems/1163},
       URL = {https://doi.org/10.4171/jems/1163},
}

@article{Adam_2017,
doi = {10.1088/1361-6544/aa59a9},
url = {https://doi.org/10.1088/1361-6544/aa59a9},
year = {2017},
month = {feb},
publisher = {IOP Publishing},
volume = {30},
number = {3},
pages = {1146},
author = {Adam, Alexander},
title = {Generic non-trivial resonances for Anosov diffeomorphisms},
journal = {Nonlinearity}
}

@article{micena2021lyapunov,
  title={Lyapunov exponents everywhere and rigidity},
  author={Micena, Fernando Pereira and Llave, Rafael de la},
  journal={Journal of Dynamical and Control Systems},
  volume={27},
  number={4},
  pages={819--831},
  year={2021},
  publisher={Springer}
}

@article{gogolev2025smooth,
  title={Smooth rigidity for very non-algebraic Anosov diffeomorphisms of codimension one},
  author={Gogolev, Andrey and Hertz, Federico Rodriguez},
  journal={Israel Journal of Mathematics},
  volume={269},
  number={2},
  pages={801--852},
  year={2025},
  publisher={Springer}
}

@article{brown2026lyapunov,
  title={Lyapunov spectrum rigidity and simultaneous linearization for random Anosov diffeomorphisms},
  author={Brown, Aaron and Shi, Yi},
  journal={arXiv preprint arXiv:2601.04679},
  year={2026}
}

@article {MR2736152,
    AUTHOR = {Avila, Artur},
     TITLE = {On the regularization of conservative maps},
   JOURNAL = {Acta Math.},
  FJOURNAL = {Acta Mathematica},
    VOLUME = {205},
      YEAR = {2010},
    NUMBER = {1},
     PAGES = {5--18},
      ISSN = {0001-5962,1871-2509},
   MRCLASS = {58C25 (37C20)},
  MRNUMBER = {2736152},
MRREVIEWER = {Jairo\ Bochi},
       DOI = {10.1007/s11511-010-0050-y},
       URL = {https://doi.org/10.1007/s11511-010-0050-y},
}

@book{pesin2004lectures,
  title={Lectures on partial hyperbolicity and stable ergodicity},
  author={Pesin, Ya B},
  volume={1},
  year={2004},
  publisher={European Mathematical Society}
}

\end{document}